\newcommand\reallywidecheck[1]{%
\savestack{\tmpbox}{\stretchto{%
  \scaleto{%
    \scalerel*[\widthof{\ensuremath{#1}}]{\kern-.6pt\bigwedge\kern-.6pt}%
    {\rule[-\textheight/2]{1ex}{\textheight}}
  }{\textheight}%
}{0.5ex}}%
\stackon[1pt]{#1}{\scalebox{-1}{\tmpbox}}%
}
\newtheorem{theorem}{Theorem}
\newtheorem{lemma}{Lemma}[section]
\newtheorem{proposition}{Proposition}[section]
\newtheorem{remark}{Remark}[section]
\newtheorem{assumption}{Assumption}[section]
\newtheorem{definition}{Definition}[section]
\numberwithin{equation}{section}
\newcommand{\eps}{\epsilon}
\newcommand{\lr}[1]{\langle #1 \rangle}
\newcommand{\n}[1]{\| #1 \|}
\newcommand{\lrH}{\sqrt{H+1}}
\newcommand{\Phiiota}{\Phi_{\iota_1 \iota_2 \iota_3 \iota_4} (k, l, m, n, p)}
\newcommand{\muiota}{\mu_{\iota_1 \iota_2 \iota_3 \iota_4} (k, l, m, n, p)}
\newcommand{\distF}{\widetilde{\mathcal{F}}}
\newcommand{\wD}{\widetilde D}
\newcommand{\wg}{\widetilde g}
\renewcommand{\b}{{\rm b}}
\newcommand{\normm}[2]{\left\|#1\right\|_{L^2_{#2}}}
\newcommand{\norm}[3]{\left\|#1\right\|_{L_{#2}^{#3}}}
\newcommand{\rplus}{{\mathbb{R}^+}}
\newcommand{\ip}[1]{\langle #1 \rangle}
\renewcommand{\part}[1]{{\bf Part #1}}
\renewcommand{\l}{\mathcal{L}}
\newcommand{\reals}{\mathbb{R}}
\renewcommand{\Re}{\text{Re }}
\renewcommand{\Im}{\text{Im }}
\newcommand{\RR}{\ensuremath{\mathbb{R}}}
\newcommand{\ZZ}{\ensuremath{\mathbb{Z}}}
\newcommand{\prtl}{\ensuremath{\partial}}
\newcommand{\calH}{\ensuremath{\mathcal{H}}} 
\newcommand{\calL}{\ensuremath{\mathcal{L}}} 
\newcommand{\calO}{\ensuremath{\mathcal{O}}} 
\newcommand{\sech}{\ensuremath{\text{sech}}}
\newcommand{\R}{\mathbb{R}}
\newcommand{\C}{\mathbb{C}}
\definecolor{bluegreen}{rgb}{0.0, 0.3, 0.9}
\newcommand{\cfp}[1]{\color{bluegreen} {\tt [FP: #1]} \color{black}}
\newcommand{\fp}[1]{\color{red}{#1} \color{black}}
\newcommand{\comment}[1]{\vskip.3cm\fbox{%
\parbox{0.93\linewidth}{\footnotesize #1}}
\vskip.3cm}
\begin{document}



\title{Asymptotic stability near the soliton \\ for quartic Klein-Gordon in 1D} 

\author{Adilbek Kairzhan}
\address{University of Toronto, Department of Mathematics, 40 St George Street, Toronto, ON,
M5S 2E4, Canada.}
\email{kairzhan@math.toronto.edu}

\author{Fabio Pusateri}
\address{University of Toronto, Department of Mathematics, 40 St George Street, Toronto, ON,
M5S 2E4, Canada.}
\email{fabiop@math.toronto.edu}

\maketitle
\vspace{-0.3in}
\begin{abstract}
We consider the nonlinear focusing Klein-Gordon equation in $1+1$ dimensions 
and the global space-time dynamics of solutions near the unstable soliton. 
Our main result is a proof of optimal decay, and local decay, for even perturbations of the static soliton 
originating from well-prepared initial data belonging to a subset of the {center-stable manifold}
constructed in \cite{BJ89,KMM21}.
Our results complement those of Kowalczyk-Martel-Mu\~noz \cite{KMM21} 
and confirm numerical results of Bizon-Chmaj-Szpak \cite{BiChSz11}
when considering nonlinearities $u^p$ with $p\geq 4$.
In particular, we provide new information both local and global in space
about asymptotically stable perturbations of the soliton 
under localization assumptions on the data.
\end{abstract}

\maketitle



\setcounter{tocdepth}{1}
\tableofcontents

\medskip
\section{Introduction}

\smallskip
\subsection{The problem}
We consider the focusing quartic Klein-Gordon (NLKG) equation 
\begin{equation}
\label{nlkg}
\partial_t^2 u - \partial_x^2 u + u = u^4,
\end{equation}
where $(t, x) \in \RR \times \RR$ are time and space variables, and $u(t,x) \in \reals$. 
Our results apply to more general power nonlinearities as well but we restrict
our discussion and proof to the case $p=4$ for concreteness; see Remark \ref{remp}.
The Hamiltonian associated with \eqref{nlkg} is
\begin{equation}\label{nlkgHam}
\calH(u,u_t) = \frac{1}{2} \int_\reals \left[ (\partial_t u)^2 + (\partial_x u)^2 + u^2 \right]dx 
  - \frac{1}{5}\int_\reals u^5 dx,
\end{equation}
and for initial data in the energy space, $(u(0),u_t(0)) \in H^1(\R)\times L^2(\R)$,
the equation is locally well-posed, and globally well-posed for small solutions.
{Global existence for small energy initial data follows from standard energy estimates, see for example \cite{Caz85}. It is also known that finite time blow-up occurs for solutions with negative energy initial data \cite{Levine} and for large positive initial data \cite{Wang, YX18}, which follows from a convexity argument.}

\eqref{nlkg} admits an explicit time-independent static soliton solution
\begin{equation}
\label{soliton}
Q(x) = (\alpha+1)^{\frac{1}{2 \alpha}} \sech^{1/\alpha} (\alpha x), \quad 
  \alpha := \frac{3}{2}.
\end{equation}
In this paper we study the dynamics of certain classes 
of global-in-time solutions ($t\geq 0$) in the vicinity of $Q$
and, in particular, their behavior on the whole real line and their decay towards the soliton.
Letting $u = Q + v$ denote the perturbed soliton for a real-valued and sufficiently small $v$,
we see that 
\begin{equation}
\label{nlkg-lin}
v_{tt} + \l v = N(v),
\end{equation}
where 
\begin{equation}
\label{estimates-lin}
\l := -\partial_x^2 + 1 - 4Q^3,
\qquad 
N(v) :=  6Q^2 v^2 + 4Q v^3 + v^4. 
\end{equation}
The spectral properties of the linearized operator $\l$ were described in details in \cite{CGNT07},
also in the case of general powers $u^p$ in \eqref{nlkg}.
In particular, it is known that $\l$ has continuous spectrum $[1, \infty)$ and exactly two isolated eigenvalues: 

\setlength{\leftmargini}{1.5em}
\begin{itemize}
\item[-] The first eigenvalue is $\lambda_0 = - \alpha (\alpha+2) = -\Omega^2$
($\Omega>0$) with corresponding even normalized eigenfunction
\begin{equation}
\label{even-efunction}
\rho(x) := c_0 (\sech(\alpha x))^{\frac{\alpha+1}{\alpha}}, \qquad \l \rho = -\Omega^2 \rho,
\end{equation} 
where the positive $c_0$ is chosen to satisfy $\|\rho\|_{L^2} = 1$; 

\item[-] The other eigenvalue is $\lambda_1 = 0$ with corresponding odd eigenfunction $Q'(x)$;
in particular, $\l$ has no gap eigenvalues (internal mode) in $(0,1)$.

\item[-] Moreover, the operator $\l$ has no resonance at the edge of the continuous spectrum,
i.e., there does not exist a bounded solution $\varphi$ to {$\l \varphi = \varphi$}.
Equivalently, the potential $V=  -4Q^3$ is `generic'.
\end{itemize}

To avoid the translation mode and having to modulate the soliton,
we restrict our analysis to even perturbations, and decompose $v$ as
\begin{equation}
\label{v-decomp}
v(t, x) = a(t) \rho(x) + \chi(t, x), \quad \lr{\chi,\rho}_{L_x^2} = 0,\quad \chi = P_c v,    
\end{equation}
where $P_c:L_x^2 \to L_x^2$ is the projection onto the continuous spectrum of $\l$.


\smallskip
\subsection{{Center-stable manifold} 
and some literature}
Before stating our main result we review the definition of the {center-stable manifold} around the soliton $Q$ for even solutions of \eqref{nlkg},
following Kowalczyk, Martel and Mu\~noz in \cite{KMM21} (see also Bates-Jones \cite{BJ89}),
as well as the main result in \cite{KMM21} concerning\footnote{The results in \cite{KMM21}
cover the more general case of nonlinearities $|u|^{p-1}u$ with $p>3$.} \eqref{nlkg}.
Rewrite the linear part of the equation \eqref{nlkg-lin} as the system
\begin{equation}
\label{linearized-system-v}
\left\{ 
\begin{array}{l}
\partial_t v_1 = v_2, 
\\
\partial_t v_2 = -\mathcal{L}v_1,
\end{array}
\right.
\end{equation}
where $\vec v := (v_1, v_2) = (v, v_t)$ and the operator $\mathcal{L}$ is defined in \eqref{estimates-lin}. 
The exponentially unstable and stable solutions of \eqref{linearized-system-v} 
are $\bm{v_+} (t,x) = e^{\Omega t} Y_+(x)$ and $\bm{v_-} (t,x) = e^{-\Omega t} Y_-(x)$, respectively, where 
\begin{equation}\label{Ypm}
Y_+ = \begin{pmatrix} \rho,  \Omega \rho \end{pmatrix} 
  \quad \text{and} \quad Y_- = \begin{pmatrix} \rho, -\Omega \rho \end{pmatrix}.
\end{equation}
We let
\begin{equation}\label{Zpm}
Z_+ = \begin{pmatrix} \rho,  \Omega^{-1} \rho \end{pmatrix}
\end{equation}
so that $
\lr{Y_-, Z_+} = 0$. 

For every $\delta_0>0$, define
\begin{equation}
\label{A-0}
\mathcal{A}(\delta_0) := \left\{  \bm{\gamma} \in H^1_x(\reals) \times L^2_x (\reals) : \quad \bm{\gamma} ~ \textrm{is even},~ \|\bm{\gamma}\|_{H^1_x \times L^2_x} < \delta_0 ~ \text{and}~ \lr{\bm{\gamma}, Z_+} = 0 \right\}.
\end{equation}

Then, the following stability result was obtained in \cite{BJ89,KMM21}.

\begin{theorem}[\cite{BJ89,KMM21}]
\label{thmSM}
There exist $C, \delta_0>0$ and a Lipschitz function $h: \mathcal{A}(\delta_0) \to \reals$ 
with $h(0)=0$ and $|h(\bm{\gamma})|\leq C\n{\bm{\gamma}}^{3/2}_{H_x^1 \times L^2_x}$ 
such that, denoting 
\begin{equation}
\label{center-manifold}
\mathcal{M}(\delta_0) := \{(Q, 0) + \bm{\gamma} + h(\bm{\gamma}) Y_+: ~ \bm{\gamma} \in \mathcal{A}(\delta_0)\}
\end{equation}
the following holds:

\begin{itemize}
\item[1.] If $(u_0, u_1) \in \mathcal{M}(\delta_0)$, then the solution of \eqref{nlkg} 
with  initial data $(u_0, u_1)$ is global and satisfies
\begin{equation}
\label{global-bound-KMM}
\big\|(u(t)-Q, \partial_t u(t))\big\|_{H_x^1 \times L^2_x}
  \leq C \big\|(u_0-Q, u_1)\big\|_{H_x^1 \times L^2_x} \quad \text{for all}~ t\geq 0.
\end{equation}

\smallskip
\item[2.] If a global even solution $(u, \partial_t u)(t)$ of \eqref{nlkg} satisfies 
\begin{equation*}
\big\|(u(t)-Q, \partial_t u(t))\big\|_{H_x^1 \times L^2_x} < \frac{1}{2}\delta_0 \quad \text{for all}~ t\geq 0,
\end{equation*}
then $(u, \partial_t u)(t) \in \mathcal{M}(\delta_0)$ for all $t\geq 0$.
\end{itemize}
\end{theorem}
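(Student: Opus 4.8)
The plan is to construct $\mathcal{M}(\delta_0)$ by a Lyapunov--Perron type fixed point argument that isolates the single exponentially unstable direction $Y_+$ and turns the requirement that the forward trajectory stay of size $O(\delta_0)$ in the energy norm into an equation determining the $Y_+$--component of the initial data as a Lipschitz function $h$ of the remaining, center--stable, data $\bm\gamma$. The converse statement (part 2) then becomes a uniqueness/shooting observation: if a global solution stays small but its $Y_+$--coordinate is not the one prescribed by $h$, that coordinate is forced to grow like $e^{\Omega t}$.

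First I would introduce coordinates adapted to the splitting of the even energy space. Writing $\vec v=(v,v_t)$, the linear part of \eqref{nlkg-lin} becomes $\partial_t\vec v=\mathcal{J}\vec v+\vec N(v)$ with $\mathcal{J}=\begin{pmatrix}0&1\\-\mathcal{L}&0\end{pmatrix}$ and $\vec N(v)=(0,N(v))$, so that $\mathcal{J}Y_\pm=\pm\Omega Y_\pm$. On the even sector $\mathcal{L}$ carries only the negative eigenvalue $-\Omega^2$ (the zero mode $Q'$ being odd), so every even $(v_1,v_2)$ decomposes uniquely as $b\,Y_++a\,Y_-+(\chi_1,\chi_2)$ with $\chi_1,\chi_2\in P_cL^2$, and by the biorthogonality $\langle Y_-,Z_+\rangle=0$, $\langle Y_+,Z_+\rangle=2\neq0$ of \eqref{Ypm}--\eqref{Zpm} (recall $\|\rho\|_{L^2}=1$) the spectral projection onto $Y_+$ is $P_+\vec\phi=\tfrac12\langle\vec\phi,Z_+\rangle Y_+$ (using also $\rho\perp P_cL^2$). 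This splitting is invariant under the linearized flow. Setting $b(t):=\tfrac12\langle\vec v(t),Z_+\rangle$ and $\bm\mu(t):=(\mathrm{Id}-P_+)\vec v(t)$, so that $\vec v=bY_++\bm\mu$ and $\langle\bm\mu,Z_+\rangle=0$, the equation becomes
\begin{equation*}
\dot b=\Omega b+R(\vec v),\qquad \partial_t\bm\mu=\mathcal{J}\bm\mu+(\mathrm{Id}-P_+)\vec N(v),\qquad R(\vec v):=\tfrac12\langle\vec N(v),Z_+\rangle.
\end{equation*}
On the $\bm\mu$--sector the linearized flow is bounded for $t\ge0$: the $Y_-$--component decays like $e^{-\Omega t}$, while the dispersive component $\chi=P_cv$ solves $\chi_{tt}+\mathcal{L}_c\chi=P_cN(v)$ with $\mathcal{L}_c:=\mathcal{L}|_{P_cL^2}$ positive and $\langle\mathcal{L}_c\chi,\chi\rangle\simeq\|\chi\|_{H^1}^2$. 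Duhamel applied to the unstable mode gives $b(t)=e^{\Omega t}\big(b(0)+\int_0^te^{-\Omega s}R(\vec v(s))\,ds\big)$; since $N$ is at least quadratic, $|R(\vec v(s))|\lesssim\|\vec v(s)\|^2$, so along any forward trajectory bounded by $\delta_0$ the integral $\int_0^\infty e^{-\Omega s}R(\vec v(s))\,ds$ converges and boundedness of $b(t)$ is equivalent to
\begin{equation*}
b(0)=-\int_0^\infty e^{-\Omega s}R(\vec v(s))\,ds,\qquad\text{in which case}\qquad b(t)=-\int_t^\infty e^{\Omega(t-s)}R(\vec v(s))\,ds.
\end{equation*}

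Next, for $\bm\gamma\in\mathcal{A}(\delta_0)$ I would run a fixed point on $X_\delta:=\{\vec v\in C([0,\infty);H^1_x\times L^2_x):\sup_{t\ge0}\|\vec v(t)\|_{H^1_x\times L^2_x}\le\delta\}$, $\delta\sim\delta_0$, with the map $\Phi(\vec v)=b(t)Y_++\bm\mu(t)$ where $\bm\mu$ solves the above equation with $\bm\mu(0)=\bm\gamma$ and $b$ is given by the closed formula. A fixed point of $\Phi$ is a global even solution of \eqref{nlkg-lin} with center--stable data $\bm\gamma$, and I set $h(\bm\gamma):=b(0)$. Since the leading term of $N$ carries the localized weight $Q^2$ and $|R|,\|(\mathrm{Id}-P_+)\vec N\|\lesssim\delta^2$, the unstable and stable Duhamel integrals are $O(\delta^2)$, so, granted the a priori bound on the dispersive part discussed below, $\Phi$ maps $X_\delta$ into itself and is a contraction for $\delta_0$ small, with the fixed point depending Lipschitz--continuously on $\bm\gamma$; this gives the Lipschitz $h$ with $h(0)=0$ and $|h(\bm\gamma)|\lesssim\sup_{t}\|\vec v(t)\|_{H^1_x\times L^2_x}^2\lesssim\|\bm\gamma\|_{H^1_x\times L^2_x}^2$, which in particular yields the stated bound $|h(\bm\gamma)|\le C\|\bm\gamma\|_{H^1_x\times L^2_x}^{3/2}$. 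Defining $\mathcal{M}(\delta_0)$ as in \eqref{center-manifold} and using $(u_0,u_1)\mapsto(u_0-Q,u_1)=\bm\gamma+h(\bm\gamma)Y_+$ proves part 1, with \eqref{global-bound-KMM} being the a priori bound itself. For part 2, if a global even solution satisfies $\|(u-Q,\partial_tu)(t)\|_{H^1_x\times L^2_x}<\tfrac12\delta_0$ for all $t\ge0$, set $\bm\gamma:=(\mathrm{Id}-P_+)(u_0-Q,u_1)$ and $b_0:=\tfrac12\langle(u_0-Q,u_1),Z_+\rangle$; boundedness of $b(t):=\tfrac12\langle(u(t)-Q,\partial_tu(t)),Z_+\rangle$ together with convergence of $\int_0^\infty e^{-\Omega s}R\,ds$ forces $b_0=-\int_0^\infty e^{-\Omega s}R(\vec v(s))\,ds$; rewriting Duhamel then exhibits this solution as a fixed point of $\Phi$ in $X_\delta$, hence equal to the one constructed above, so $b_0=h(\bm\gamma)$ and $(u_0,u_1)\in\mathcal{M}(\delta_0)$.

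The one genuinely nontrivial step is the uniform bound $\sup_{t\ge0}\|\bm\mu(t)\|_{H^1_x\times L^2_x}\lesssim\|\bm\gamma\|_{H^1_x\times L^2_x}$ needed to close the fixed point and to obtain \eqref{global-bound-KMM}: the naive energy identity for the dispersive part, $\tfrac{d}{dt}\big(\|\chi_t\|^2+\langle\mathcal{L}_c\chi,\chi\rangle\big)=2\langle P_cN(v),\chi_t\rangle$, only yields growth linear in $t$. Controlling $\chi$ is precisely the heart of the Kowalczyk--Martel--Mu\~noz analysis \cite{KMM21}: one uses a corrected energy functional adapted to $\mathcal{L}$ that is coercive on the even sector once $\rho$ and $Y_\pm$ are projected out, together with localized virial identities which, exploiting the absence of an internal mode and the genericity of the potential $V=-4Q^3$, produce time--integrable local decay of $(v,v_t)$; feeding this back into the energy and unstable--mode estimates closes the uniform bound. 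Alternatively, the existence, codimension one, and Lipschitz regularity of $\mathcal{M}(\delta_0)$ can be quoted from the abstract invariant--manifold theorem of Bates--Jones \cite{BJ89}, whose spectral hypotheses hold here since on the even sector $\mathcal{L}$ has a single simple negative eigenvalue and neither an eigenvalue in the spectral gap nor a resonance at the threshold of its continuous spectrum $[1,\infty)$.
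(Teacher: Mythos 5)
The paper does not prove Theorem~\ref{thmSM}; it cites \cite{BJ89,KMM21}, so there is no in-text proof to compare against. Your Lyapunov--Perron outline is nonetheless the right framework, and the coordinates you set up---the unstable projection $P_+\vec\phi=\tfrac12\langle\vec\phi,Z_+\rangle Y_+$ (justified by $\langle Y_+,Z_+\rangle=2$, $\langle Y_-,Z_+\rangle=0$, $\rho\perp P_cL^2$), the backward Duhamel formula $b(t)=-\int_t^\infty e^{\Omega(t-s)}R(\vec v(s))\,ds$ determining $h(\bm\gamma)=b(0)$, and the complementary evolution of $\bm\mu$ under a forward-bounded semigroup---are essentially identical to those used in \cite{KMM21}, so this is the same route rather than a genuinely different one. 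A few remarks worth making explicit. First, you correctly isolate the one genuinely hard step: closing $\sup_{t\ge0}\|\bm\mu(t)\|_{H^1_x\times L^2_x}\lesssim\|\bm\gamma\|_{H^1_x\times L^2_x}$ cannot come from the naive energy identity, and requires either the coercivity/virial machinery of \cite{KMM21} or the abstract invariant-manifold theorem of \cite{BJ89}; quoting this is appropriate since the theorem itself is cited. Be careful, however, that \cite{BJ89} by itself produces a locally invariant Lipschitz center-stable manifold, but the quantitative uniform-in-time bound \eqref{global-bound-KMM} for \emph{all} $t\ge 0$ is not an immediate corollary of the abstract theorem and genuinely needs the coercive-energy input; you should not present the two alternatives as interchangeable for Part~1. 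Second, your argument produces the stronger bound $|h(\bm\gamma)|\lesssim\|\bm\gamma\|^2_{H_x^1\times L^2_x}$ (since $R$ is quadratic and the a priori bound gives $\sup_t\|\vec v(t)\|\lesssim\|\bm\gamma\|$), which of course implies the stated $3/2$-power bound for $\|\bm\gamma\|$ small; it would be cleaner to state this explicitly rather than silently downgrade the exponent. Third, for Part~2 you should note that the hypothesis $\|(u-Q,\partial_tu)(t)\|<\tfrac12\delta_0$ is used twice: once to ensure $\int_0^\infty e^{-\Omega s}R\,ds$ converges (forcing $b_0$ to take the value that kills the $e^{\Omega t}$ growth), and once, via $\|\mathrm{Id}-P_+\|\lesssim 1$, to guarantee $\bm\gamma\in\mathcal{A}(\delta_0)$ so that the uniqueness-of-fixed-point argument can be invoked; the factor $\tfrac12$ is exactly the margin that absorbs the projection constant.
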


The above classical theorem provides a {center-stable manifold} $\mathcal{M}=\mathcal{M}(\delta_0)$ such that data 
originating in $\mathcal{M}$ is global, 
and stays globally-in-time close to the soliton in the energy norm.
Moreover, any solution that is globally sufficiently close to $Q$, has to lie in $\mathcal{M}$. {Strictly speaking, construction of the manifold $\mathcal{M}$ in the original result by \cite{BJ89,KMM21} was done for the nonlinearity of type $|u|^{2\alpha} u$ with $\alpha>1$, but the proof carries forward for the nonlinearity of type $u^{2\alpha+1}$ as in \eqref{nlkg}.}

Note that, in view of the decomposition $u = Q+v = Q + a(t)\rho + \chi$, see \eqref{v-decomp}, 
if one chooses sufficiently small data $a(0), \dot{a}(0)$ 
and $(\chi(0,x), \chi_t(0,x)) \in H^1_x\times L^2_x$ small enough,
so that $(u(0),u_t(0)) \in \mathcal{M}(\delta_0)$,
then the global bound \eqref{global-bound-KMM} implies that, for every $t\geq 0$,
we have $\n{v(t)}_{H_x^1 \times L_x^2} \lesssim \delta_0$ and 
\begin{equation}
\label{global-bound-KMM-a-chi}
|a(t)| + |\dot a(t)| + \normm{\chi(t)}{x} + \normm{\partial_t \chi(t)}{x} \lesssim \delta_0.
\end{equation}

A natural question to ask is whether the above global solutions 
are also attracted to the soliton, besides remaining close to it for all times,
and what is their asymptotic behavior as $t \rightarrow \infty$.

For nonlinear equations with large enough power nonlinearities these types 
of asymptotic stability result have been achieved by several authors. 
{The literature is ample so we will only mention a few recent results,
and refer the readers to the cited works for more discussions and references.
For the Klein-Gordon equation with power nonlinearity $u^p$ with $p>5$
asymptotic stability (on the {center-stable manifold}) was proven by Krieger-Nakanishi-Schlag \cite{KNS12}
for data in the energy space.
See also \cite{NSbook,NS3dnr} and reference therein, for a similar problem
and more general results in higher dimensions.
See also the review 
by Kowalczyk-Martel-Mu\~noz \cite{KMMreview}. 
For the nonlinear Schr\"{o}dinger (NLS) equation 
Krieger-Schlag \cite{KriSchNLS} treated the case of a $|u|^{2\alpha}u$ nonlinearity with $\alpha>2$ in $1$d
and Cuccagna-Pelinovsky \cite{CP14} the cubic case.
See also the seminal work of Schlag \cite{Sch09} for the cubic NLS in $3$d,
the recent works and survey by Cuccagna-Maeda \cite{CM21B,CM21}
and Maeda-Yamazaki \cite{MaedaYamazaki} and references therein. 
}

For low power nonlinearities instead (typically the {mass sub-critical} ones)
one cannot expect similar asymptotic stability result
in the energy space $H^1_x(\R) \times L^2_x(\R)$, essentially because Strichartz estimates
are not very effective in controlling the nonlinearity.
There are then two natural different and complementary ways to look at this question:
(a) studying the asymptotics for energy solutions locally in the space variable $x$,
or 
(b) studying asymptotics on the whole real line in a smaller space, e.g., 
a weighted Sobolev space. 
The first approach is usually based on delicate virial type estimates, and has been 
very successfully employed in several problems;
see for example the already mentioned \cite{KMM21}, the recent work of Li-L\"uhrmann \cite{LiLu22} on the soliton of quadratic KG, Cuccagna-Maeda-Scrobogna \cite{CMS22}
on cubic KG, and the works on kink solutions for relativistic scalar fields \cite{KowMarMun,KowMarMunVDB,CM22}.
The second approach has instead been pursued mostly in the context of 
kinks (non-localized solitons) which naturally lead to nonlinear PDEs with potentials and 
low power non-localized nonlinearities;
we refer the reader to Delort-Masmoudi \cite{DMKink}, Germain and the second author \cite{GP20} and Zhang
\cite{KGVSim},
L\"uhrmann-Schlag \cite{LSch}, Lindblad-L\"uhrmann-Soffer \cite{LLS20},
and Lindblad-L\"uhrmann-Schlag-Soffer \cite{LLSS}.

\smallskip
Concerning the problem we are interested in,
\cite{KMM21} showed the following asymptotic stability result

\begin{theorem}[{\cite[Theorem 1]{KMM21}}]\label{thmKMM}
Consider the Klein-Gordon equation
\begin{align}\label{nlkgalpha}
\partial_t^2 u - \partial_x^2 u + u = |u|^{2\alpha}u, \qquad \alpha > 1,
\end{align}
and denote its static soliton by
\begin{equation}\label{solitonalpha}
Q(x) := (\alpha+1)^{\frac{1}{2 \alpha}} \sech^{1/\alpha} (\alpha x).
\end{equation}
Then, there exists a constant $\delta >0$ such that if a global even solution $(u,u_t)$ of \eqref{nlkgalpha}
satisfies
\begin{align}\label{KMMas}
\big\|(u(t)-Q, \partial_t u(t))\big\|_{H_x^1(\R) \times L^2_x(\R)} < \delta \quad \text{for all}~ t\geq 0, 
\end{align}
then, for any bounded interval $I$, 
\begin{align}\label{KMMconc}
\lim_{t\rightarrow \infty} \big\|(u(t)-Q, \partial_t u(t))\big\|_{H_x^1(I) \times L^2_x(I)} = 0.
\end{align}
Moreover,
\begin{align}\label{KMMconcint}
\int_0^\infty \big\|(u(t)-Q, \partial_t u(t))\big\|_{H_x^1(I) \times L^2_x(I)}^2 \, dt < \infty.
\end{align}
\end{theorem}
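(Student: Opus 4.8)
The plan is to follow the virial approach of Kowalczyk--Martel--Mu\~noz: peel off the exponentially unstable mode, which the global hypothesis \eqref{KMMas} forces to be slaved to the radiation, and then establish a space--time integrability bound for the continuous-spectrum part $\chi$ via two coupled virial estimates. Writing $u = Q + v$ and using that $v$ is even (so that no translation mode and no modulation are needed), decompose $v = a(t)\rho + \chi$ with $\chi = P_c v$; here $\mathcal{L} = -\partial_x^2 + 1 - (2\alpha+1)Q^{2\alpha}$ and the nonlinearity $N(v) = |Q+v|^{2\alpha}(Q+v) - Q^{2\alpha+1} - (2\alpha+1)Q^{2\alpha}v$ has a spatially localized leading quadratic term $\sim Q^{2\alpha-1}v^2$. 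Projecting \eqref{nlkgalpha} onto $\rho$ and onto $P_c L^2$ gives $\ddot a - \Omega^2 a = \langle N(v),\rho\rangle$ and $\chi_{tt} + \mathcal{L}\chi = P_c N(v)$. From \eqref{KMMas} one has $|a| + |\dot a| + \|\chi\|_{H^1_x} + \|\chi_t\|_{L^2_x}\lesssim\delta$ for all $t\ge0$, and since $a$ is a bounded solution on $[0,\infty)$ of a scalar ODE governed by the unstable operator $\partial_t^2 - \Omega^2$, variation of parameters using the bounded-at-$+\infty$ representation gives
\begin{equation*}
|a(t)|^2 + |\dot a(t)|^2 \;\lesssim\; \delta^2\!\int_0^\infty e^{-\Omega|t-s|}\Big(\int\rho\,\chi^2\Big)(s)\,ds \;+\; (\text{higher order}) \;+\; e^{-2\Omega t}.
\end{equation*}
Thus $a$ and $\dot a$ enter all later estimates only through quantities quadratically small in the localized mass of $\chi$, so $\int_0^\infty(a^2+\dot a^2)\,dt$ will be controlled once the analogous bound for $\chi$ is established; the whole problem reduces to a space--time estimate for $\chi$.

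The function $\chi$ lives on the even subspace orthogonal to $\rho$, on which $\mathcal{L}$ has spectrum contained in $[1,\infty)$ --- no zero mode, since $Q'$ is odd, and no internal mode by hypothesis --- hence $\langle\mathcal{L}\chi,\chi\rangle\gtrsim\|\chi\|_{H^1_x}^2$. Coercivity alone, however, does not close the argument at a low power, so one runs a virial estimate: differentiating a Morawetz-type functional $\mathcal{I}[\chi] := \int\!\big(\varphi\,\partial_x\chi + \tfrac12\varphi'\chi\big)\chi_t\,dx$ (with a bounded odd weight $\varphi$ satisfying $\varphi'(x)\gtrsim\langle x\rangle^{-2}$) along the flow, and using the equation, the slaving estimate above to dispose of the $a$-errors, and $\|\chi\|_{L^\infty_x}\lesssim\delta$ to absorb $N$ --- this is exactly where the hypothesis $2\alpha+1>3$ is used, via $\int|\chi|^{2\alpha+1}(\text{weight})\lesssim\delta^{2\alpha-1}\!\int\chi^2(\text{weight})$ and via the exponential spatial weights carried by the $Q$-dependent terms --- one obtains, for $\delta$ small,
\begin{equation*}
\frac{d}{dt}\mathcal{I}[\chi] \;\le\; -c\!\int\frac{\chi_t^2 + \chi_x^2}{\langle x\rangle^2}\,dx \;+\; C\!\int(\text{exp.\ localized})\,\chi^2\,dx,
\end{equation*}
modulo absorbable errors. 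The residual localized $\chi^2$ term has the wrong sign and cannot be handled by energy methods here; one controls it with a \emph{second} virial functional $\mathcal{J}$ obtained after a change of variables that conjugates the linearized operator (on this subspace) to a Schr\"odinger operator with a repulsive potential --- no bound states, no threshold resonance --- whose time derivative bounds a localized energy dominating $\int_{\mathrm{loc}}\chi^2$ up to a term dominated by the good term of $\mathcal{I}$. A combination $\mathcal{I} + \lambda\mathcal{J}$ with $\lambda$ small then closes the estimate.

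Integrating the combined virial inequality over $[0,\infty)$ and using $|\mathcal{I}| + |\mathcal{J}|\lesssim\delta^2$ gives
\begin{equation*}
\int_0^\infty\!\!\int\frac{\chi_t^2 + \chi_x^2}{\langle x\rangle^2}\,dx\,dt \;+\; \int_0^\infty\!\!\int_{\mathrm{loc}}\big(\chi_t^2 + \chi_x^2 + \chi^2\big)\,dx\,dt \;<\;\infty,
\end{equation*}
and the slaving estimate upgrades this to $\int_0^\infty(a^2+\dot a^2)\,dt<\infty$; together these give \eqref{KMMconcint}. Finally, \eqref{KMMconc} follows from this time-integrability together with the uniform bound $\sup_{t\ge0}\|(u-Q,u_t)\|_{H^1_x\times L^2_x}\lesssim\delta$: a smoothly cut-off energy of $v = u - Q$ supported near $I$ then lies in $L^1(0,\infty)$ and has a time derivative bounded by the energy norm (after one integration by parts no second derivatives of $u$ survive), hence tends to $0$ as $t\to\infty$, which is the desired statement.

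I expect the coupled virial step to be the main obstacle. At a power $\le 3$ a single virial leaves behind an uncontrolled localized $\chi^2$ term, which forces one also to run a second virial for the variable obtained after conjugating to a repulsive operator, and then to choose the constants and cutoff scales so that the two dangerous terms cross-absorb. The delicate part is the bookkeeping for (i) the cutoff error terms, affordable only because of the exponential decay of $Q$ and the smallness of $\delta$, and (ii) the feedback into the unstable mode $a$ through the slaving estimate; by contrast, the absence of an internal mode is exactly what makes no Fermi--golden--rule computation necessary here.
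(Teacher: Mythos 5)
This theorem is not proved in the paper under review: it is quoted verbatim as Theorem~1 of Kowalczyk--Martel--Mu\~noz \cite{KMM21} and used as a black box, so there is no internal proof to compare your proposal against. Taken on its own terms, your sketch is a faithful reconstruction of the coupled-virial argument of \cite{KMM21}: the even decomposition $v = a(t)\rho + \chi$, the slaving of the unstable discrete mode by the global-boundedness hypothesis \eqref{KMMas}, a first Morawetz-type virial in the original variable, a second virial after conjugating the linearized operator to one with a repulsive potential, and a combination of the two that yields the space--time bound from which \eqref{KMMconcint} and then \eqref{KMMconc} follow. Two points are worth sharpening. First, you attribute the role of the hypothesis $\alpha>1$ entirely to the nonlinear absorption estimates; in fact it also plays an essential \emph{linear} role: at $\alpha=1$ the operator $\mathcal{L}$ has an even threshold resonance (see Remark~\ref{remp} of this paper), and the genericity valid only for $\alpha>1$ is needed for the coercivity of the conjugated operator and for the Darboux-type transformation to produce a genuinely repulsive potential. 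Second, the step from $\int_0^\infty \!\int_{\mathrm{loc}}\chi^2\,dx\,dt < \infty$ to $\int_0^\infty (a^2 + \dot a^2)\,dt < \infty$ via the forward-in-time Duhamel representation of $a$ implicitly uses that convolution with $e^{-\Omega|\cdot|}$ maps $L^1_t$ into $L^1_t$; this is elementary but should be made explicit, since it is the precise mechanism by which the discrete mode inherits the integrability of the radiation.
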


Note that a global solution satisfying \eqref{KMMas} for $\delta$ small enough 
is on the {center-stable manifold} and therefore stays close to the soliton in view of Theorem \ref{thmSM}.
Theorem \ref{thmKMM} then gives that these solutions actually converge as $t \rightarrow \infty$ to 
$Q$ locally-in-space.
Our main theorem will give a complementary asymptotic stability result;
we will assume more localization on the (well-prepared) 
initial data and prove convergence to $Q$ on the whole real-line
with optimal time-decay rates, and local-decay estimates as well.
In particular, we will show that, see \eqref{pointwise-decay-main-theorem},
\begin{align}\label{concint01}
& \int_0^\infty \big\| 
  u(t)-Q 
  \big\|_{L^\infty_x(\R)}^{2+} \, dt < \infty,
\\
\label{concint02}
& \int_0^\infty \big\|\langle x \rangle^{-2} (
  u(t)-Q 
  )
  \big\|_{L^\infty_x(\R)}^{1+} \, dt < \infty.
\end{align}

\smallskip
\subsection{Main result}
Recall that we write $u(t,x) = Q(x) + v(t,x) = Q(x) + a(t)\rho(x) + \chi(t,x)$,
where $\rho$ is given in \eqref{even-efunction},
$Y_\pm := (\rho, \pm \Omega \rho)$ and $\mathcal{M}$ is defined in \eqref{center-manifold} with \eqref{Zpm}.
We will consider well-prepared initial data in a subset of $\mathcal{M}$ as we now describe.
Let $\bm{\gamma}(x) = (\gamma_1(x), \gamma_2(x))$ 
be given in the form (see \eqref{A-0})
\begin{equation*}
\bm{\gamma}(x) = b Y_-(x) + \bm{\zeta}(x), \quad \bm{\zeta} = (\zeta_1, \zeta_2),
\end{equation*}
where $b$ is a scalar constant, and $\bm{\zeta}$ is even in $x$, with $\lr{\rho, \zeta_j} = 0$ for $j=1,2$. 
In other words, $\bm{\gamma}$ has components
\begin{equation}
\label{gamma-12-form}
\begin{aligned}
\gamma_1(x) = b \rho(x) + \zeta_1(x), \quad \gamma_2(x) = -\Omega b \rho(x) + \zeta_2(x).
\end{aligned}
\end{equation}
We define (see \eqref{estimates-lin})
\begin{align}\label{defH}
H := P_c (-\partial_x^2 - 4Q^3), \quad H+1 = P_c \l, 
\end{align}
where $P_c$ denotes the projection onto the continuous spectrum,
and require that
\begin{equation}
\label{initial-data-gamma}
|b| + \n{(\sqrt{H+1}~ \zeta_1, \zeta_2)}_{H_x^2} + \n{\lr{x}(\sqrt{H+1}~ \zeta_1, \zeta_2)}_{L_x^2} \leq c\eps_0
\end{equation}
for sufficiently small $\eps_0>0$, with $c$ a small absolute constant ($c=1/10$, say). 
Then, $\bm{\gamma}$ belongs to a strict subset of $\mathcal{A}(\eps_0)$.  
We then define the following subset of $\mathcal{M}(\eps_0)$:
\begin{equation}
\label{center-manifold-subset}
\mathcal{M}_0(\eps_0) := \{(Q,0) + \bm{\gamma} + h(\bm{\gamma}) Y_+: \quad 
  \bm{\gamma}~ \text{is even and satisfies \eqref{gamma-12-form}--\eqref{initial-data-gamma}} \} 
  \subseteq \mathcal{M}(\eps_0),
\end{equation}
where $h$ is the Lipschitz function from Theorem \ref{thmSM}.
We are now ready to state our main result:

\begin{theorem}\label{main-theorem}
There exists sufficiently small $\eps_0>0$ such that, for every $(u_0, u_1) \in \mathcal{M}_0(\eps_0)$,
the equation \eqref{nlkg} with initial data $(u, \partial_t u)(0) = (u_0, u_1)$ 
admits a unique global solution of the form 
\begin{equation}
\label{main-theorem-solution}
u(t,x) = Q(x) + a(t) \rho(x) + \chi (t,x), \quad \lr{\rho, \chi}_{L_x^2} = 0,
\end{equation}
that satisfies, for all $t\geq 0$, \eqref{global-bound-KMM} and the 
following global-in-time decay bounds:





\begin{align}
\label{pointwise-decay-main-theorem}
|a(t)| \lesssim \eps_0 \lr{t}^{-2}, \qquad \n{\chi}_{L_x^\infty} \lesssim \eps_0 \lr{t}^{-1/2},
\qquad 
\n{ \lr{x}^{-2} \chi}_{L_x^\infty} \lesssim \eps_0 \lr{t}^{-1}.
\end{align}
Moreover, $\chi$ behaves like a free solution of the Klein-Gordon equation as $t \rightarrow \infty$.

\end{theorem}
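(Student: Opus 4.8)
\noindent\emph{Strategy of proof.}
The plan is to establish the decay bounds \eqref{pointwise-decay-main-theorem} by a continuity/bootstrap argument (global existence and the energy bound \eqref{global-bound-KMM} being already provided by Theorem \ref{thmSM}), combining the modulation equations for the unstable mode $a$ with a distorted Fourier analysis of $\chi$ adapted to $H$. First I record the reduced equations. Projecting \eqref{nlkg-lin} onto $\rho$ and setting $a_\pm := \tfrac12\big(a \pm \Omega^{-1}\dot a\big)$, the stable manifold constraint $\lr{(u-Q,\partial_t u),Z_+}\equiv 0$ becomes
\begin{equation*}
a_+(t) = -\frac{1}{2\Omega}\int_t^\infty e^{-\Omega(s-t)}F(v)(s)\,ds, \qquad a_-(t) = e^{-\Omega t}a_-(0) + \frac{1}{2\Omega}\int_0^t e^{-\Omega(t-s)}F(v)(s)\,ds,
\end{equation*}
where $F(v):=\lr{N(v),\rho}$, so that $|a(t)|+|\dot a(t)| \lesssim \eps_0 e^{-\Omega t} + \int_0^\infty e^{-\Omega|t-s|}|F(v)(s)|\,ds$. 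Projecting onto the continuous spectrum gives $\partial_t^2\chi + (H+1)\chi = P_c N(v)$; introducing the profiles $f_\pm := e^{\mp it\lrH}\big(\chi \mp i\,\lrH^{-1}\partial_t\chi\big)$ and the distorted Fourier transform $\wfourier$ of $H$, I would work with $g_\pm(t,k) := (\wfourier f_\pm)(t,k)$, which satisfies $\partial_t g_\pm(t,k) = \mp\tfrac{i}{2}\langle k\rangle^{-1}e^{\pm it\langle k\rangle}\,(\wfourier P_c N(v))(t,k)$, with $\langle k\rangle := \sqrt{k^2+1}$.

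The bootstrap would be run on $[0,T]$ with the hypotheses (for a large constant $C_0$): $|a(t)|+|\dot a(t)|\le C_0\eps_0\lr{t}^{-2}$; boundedness of the weighted distorted profiles $\|\langle k\rangle^{3/2} g_\pm(t)\|_{L^\infty_k}\le C_0\eps_0$; and the energy/weighted-energy bounds $\|g_\pm(t)\|_{H^2_k}\le C_0\eps_0$, $\|\partial_k g_\pm(t)\|_{L^2_k}\le C_0\eps_0\lr{t}^{\kappa}$ for a small fixed $\kappa>0$. Because $V=-4Q^3$ is generic and, in the even sector, $H\ge 0$ has neither a threshold resonance nor an eigenvalue, one has the linear dispersive estimate $\|e^{\pm it\lrH}P_c\|_{L^1_x\to L^\infty_x}\lesssim \lr{t}^{-1/2}$ and the weighted local-energy decay $\|\lr{x}^{-1}e^{\pm it\lrH}P_c\lr{x}^{-1}\|_{L^2_x\to L^2_x}\lesssim \lr{t}^{-3/2}$ (and their $W^{k,p}$ counterparts). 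Fed by the profile bounds, these give $\|\chi(t)\|_{L^\infty_x}\lesssim C_0\eps_0\lr{t}^{-1/2}$ and, after Sobolev embedding, the improved local decay $\|\lr{x}^{-2}\chi(t)\|_{W^{2,\infty}_x}\lesssim C_0\eps_0\lr{t}^{-1}$.

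With these in hand I close each bound. For $a$: expanding $N(v)$ in $v=a\rho+\chi$, the dominant piece of $F(v)=6\lr{Q^2v^2,\rho}+\ldots$ is $6\lr{Q^2\rho,\chi^2}$, which by the improved local decay is $\lesssim\|\lr{x}^{-2}\chi\|_{L^\infty_x}^2\lesssim\eps_0^2\lr{t}^{-2}$, the remaining terms being $O(\eps_0^2\lr{t}^{-3})$; convolving against $e^{-\Omega|t-s|}$ yields $|a(t)|+|\dot a(t)|\lesssim\eps_0^2\lr{t}^{-2}$, beating $C_0\eps_0\lr{t}^{-2}$. For $\chi$: split $\chi=\chi_{\mathrm{lin}}+\chi_{\mathrm{Duh}}$; the linear part satisfies the desired bounds directly from the data assumption \eqref{initial-data-gamma} and the linear estimates. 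In $P_c N(v)$ every term other than $\chi^4$ carries a factor of $Q$, $Q^2$, $\rho$ or $a$, hence is spatially localized and, using the local decay of $\chi$ and smallness of $a$, is $\lesssim\eps_0^2\lr{s}^{-2}$ in $W^{1,1}_x\cap H^2_x$; its Duhamel integral then contributes $\lesssim\eps_0^2\lr{t}^{-1/2}$ in $L^\infty_x$ (and $\lesssim\eps_0^2\lr{t}^{-3/2}$ locally), and is harmless for the $H^2_k$ and local-decay bootstraps. The non-localized quartic term $\chi^4$ is the main difficulty: it is only $O(\lr{s}^{-1})$ in $L^1_x$, so the crude Duhamel bound loses a logarithm. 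I would remove it by a normal form: on the distorted side $\wfourier(\chi^4)$ is an integral against the quartic nonlinear spectral distribution $\mu_{\iota_1\iota_2\iota_3\iota_4}$ carrying the phase $\Phi_{\iota_1\iota_2\iota_3\iota_4}$, and since $|\langle k\rangle-\Phi_{\iota_1\iota_2\iota_3\iota_4}|$ only degenerates at collinear high frequencies (where it is $\gtrsim\langle k\rangle^{-1}$) and transversally along interior sets away from $k=0$, an integration by parts in $s$ is admissible; it trades $\chi^4$ for a quartic boundary term (of size $O(\eps_0^4)$, bounded, absorbed into a modified profile) plus a quintic time integral in which the factor $\partial_s g_\pm(s,k)\sim\langle k\rangle^{-1}(\wfourier N(v))(s,k)$ is estimated in $L^2_k$, where it is $O(\lr{s}^{-3/2})$ and hence integrable. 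The cubic $4Qv^3$ is localized and requires no normal form.

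Finally, the weighted-energy bound $\|\partial_k g_\pm(t)\|_{L^2_k}\lesssim\eps_0\lr{t}^{\kappa}$ is propagated by a space--time resonance analysis: when $\partial_k$ lands on $e^{\pm is\langle k\rangle}$ (or on a phase inside $\wfourier N(v)$) it creates a factor of $s$, which is removed by integrating by parts in the internal frequencies at the cost of $1/\partial\Phi$; the space--time resonant set sits at the origin in frequency, where the distorted plane waves of the generic potential $V$ are explicit and mild, and accounts for at most the $\lr{t}^{\kappa}$ growth. Once all bounds close, $\partial_t g_\pm(t)$ is integrable in time in $L^2_k\cap L^\infty_k$, so $g_\pm(t)\to g_\pm^\infty$ as $t\to\infty$; as the distorted Fourier transform and the wave operators of $V$ are bounded and invertible on $L^2_x$ and on Sobolev spaces, $\chi(t)$ is then asymptotic to a genuine free Klein--Gordon solution. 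I expect the quartic normal form, and its interplay with the low-frequency space--time resonant analysis underlying the weighted-energy estimate, to be the main technical obstacle, together with establishing the required dispersive and local-energy-decay estimates for $H$ in the even sector.
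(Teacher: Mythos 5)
Your proposal shares the paper's overall skeleton (modulation for $a$ via the stability condition, a distorted-Fourier profile for $\chi$, a bootstrap on $|a|$, a weighted $L^2_k$ norm, and a Sobolev norm, plus a normal form / integration by parts in $s$ for the quartic), but it misses the central analytic obstruction and, as written, would not produce the optimal rates in \eqref{pointwise-decay-main-theorem}.

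The key difficulty that the paper must handle is not the quartic $\chi^4$ (which, as you correctly observe, is amenable to a direct $\partial_s$ integration by parts because the quartic Klein--Gordon phase $\langle k\rangle -\iota_1\langle l\rangle-\cdots-\iota_4\langle p\rangle$ never vanishes at the frequency origin). It is the quadratic localized term $Q^2\chi^2$ and its contribution to $\|\partial_k \widetilde g\|_{L^2_k}$: when $\partial_k$ lands on $e^{is\langle k\rangle}$ in the Duhamel integral one picks up the factor $s\,k/\langle k\rangle$, and the naive Minkowski bound via local decay gives $\int_0^t s\,\langle s\rangle^{-2}\,ds \approx \log t$ (see the remark containing \eqref{divergent-B3}). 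Your proposal acknowledges the weighted norm only through a bootstrap hypothesis $\|\partial_k g_\pm(t)\|_{L^2_k}\lesssim \eps_0 \langle t\rangle^{\kappa}$ and attributes its propagation to a space--time resonance analysis at the origin. For the bilinear KG interaction, however, the internal-frequency derivatives $\partial_l\Phi, \partial_m\Phi$ do vanish at $l,m=0$, so the integration by parts you invoke to remove the factor of $s$ degenerates precisely where it is needed, and you would have to exploit the vanishing of the distorted waves $\psi(x,0)=0$ in the generic case in a non-trivial way; this is not sketched, so there is a genuine gap. The paper sidesteps all of this: instead of a normal form or space--time resonance argument for $Q^2\chi^2$, it proves a Kato-type local smoothing estimate (Lemma \ref{lemsmoothing}) for $\int_0^t e^{-i\langle k\rangle s}\overline{\phi}(k)\overline{\mathcal{Q}}(x,k)F(s,x)\,dx\,ds$ and uses it in $L^1_xL^2_s$, which together with local decay yields a uniform bound on $\|\partial_k\widetilde g\|_{L^\infty_t L^2_k}$ (no $\langle t\rangle^\kappa$ loss at all). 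This is emphasized in the paper: the absence of growth is exactly what lets the simple dispersive estimate \eqref{dispersive-estimates} suffice.

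Two further issues follow from the $\langle t\rangle^\kappa$ loss. First, your $\|\chi\|_{L^\infty_x}\lesssim\langle t\rangle^{-1/2}$ is derived from an $L^1_x\to L^\infty_x$ bound, but your bootstrap quantities are $L^2_k$-based (the data lies in $L^2$, not $L^1$), so you must use an $L^2$-based dispersive estimate of the type in Lemma \ref{lemma-dispersive-estimates}; that estimate loses the growth of $\|\partial_k\widetilde g\|_{L^2_k}$, so your scheme only gives $\langle t\rangle^{-1/2+\kappa}$, weaker than claimed. Second, you do not mention the decomposition \eqref{generalized-eigenf-decomp} of $\psi$ into singular/regular parts and the corresponding splitting \eqref{mu-r-1}--\eqref{mu-r-2} of the quartic nonlinear spectral distribution; this decomposition is not cosmetic, since after integration by parts in internal frequencies the $\delta$- and p.v.-type pieces of $\widehat{\nu_\pm}$ in \eqref{nu-plus-fourier} must be estimated by reducing to a product of flat Klein--Gordon waves in physical space (Lemma \ref{lemma-multilinear-in-physical}), while the regular parts are handled via the pseudo-differential bounds \eqref{pseudo-psi-regular-bound}--\eqref{pseudo-dk-psi-regular-bound}. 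Without these two ingredients — the smoothing estimate for the spatially-localized quadratic term, and the singular/regular multilinear machinery for the quartic — the proof as you have outlined it does not close.
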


\medskip
Here are a few remarks about our result:


\begin{remark}[About the initial data]\label{remdata2}
The choice of the initial data can also be interpreted as follows.
We can express $(u_0, u_1)$ as 
\begin{align*}
& u_0(x) = Q(x) + a_0 \rho(x) + \chi_0(x), 
\\
& u_1(x) = a_1 \rho(x) + \chi_1(x),
\end{align*}
with $\lr{\chi_j,\rho}_{L^2_x}=0$, $j=1,2$,
$(a_0,a_1) = (a(0),\dot{a}(0))$, $(\chi_0,\chi_1) = (\chi(0),\partial_t\chi(0))$.
Then, 
\begin{equation*}
(u_0, u_1) = (Q, 0) + \frac{1}{2} \Big( a_0 - \frac{1}{\Omega} a_1 \Big) Y_- 
  + \frac{1}{2} \Big( a_0 + \frac{1}{\Omega} a_1 \Big) Y_+ 
  + (\chi_0, \chi_1),
\end{equation*}
which, compared with \eqref{gamma-12-form} would give
$b = (1/2)(a_0 - \Omega^{-1} a_1)$, $(\zeta_1,\zeta_2) = (\chi_0,\chi_1)$.
Then $(u_0, u_1) \in \mathcal{M}_0(\eps_0)$ 
if we impose the stability condition
\begin{equation*}
\frac{1}{2} \Big( a_0 + \frac{1}{\Omega} a_1 \Big) 
= h\Big( \frac{1}{2} \Big( a_0 - \frac{1}{\Omega} a_1 \Big) Y_- + (\chi_0, \chi_1) \Big),
\end{equation*}
that $\chi_0,\chi_1$ are even, and 
\begin{equation}
\label{initial-data-chi}
|a_0| + |a_1| + \n{(\sqrt{H+1}~ \chi_0, \chi_1)}_{H_x^2} + \n{\lr{x}(\sqrt{H+1}~ \chi_0, \chi_1)}_{L_x^2} 
  \leq c \eps_0.
\end{equation}
for some absolute $c>0$ small enough.
\end{remark}

\begin{remark}[Linearized operator]\label{remlinop}
The potential appearing in the linearized operator \eqref{estimates-lin} is 
$V = -4Q^3 = -10 \cosh^{-2}(3x/2)$.
The corresponding Schr\"odinger operator is, up to a conjugation, given by
$H = -\partial_x^2 + c \cosh^{-2}(x)$ with $c= -40/9$.
In particular, $V$ is not in the class of P\"oschel-Teller potentials 
$V_\ell(x) = - \ell(\ell+1)\cosh^{-2}(x)$, $\ell=0,1,2,\dots$ 
that can be conjugated to the flat $V=0$ operator;
see the discussion in \cite{LSch} and references therein.
\end{remark}

\begin{remark}[More general powers 1]\label{remp}
Consider the pure power Klein-Gordon equation
\begin{align}\label{nlkgp}
\partial_t^2 u - \partial_x^2 u + u = |u|^{2\alpha} u,
\end{align}
and its soliton
\begin{equation}\label{solitonp}
Q_\alpha(x) = (\alpha+1)^{\frac{1}{2 \alpha}} \sech^{1/\alpha} (\alpha x), 
\end{equation}
Recall that Theorem \ref{thmKMM} applies to all powers $\alpha > 1$.
In the borderline cubic case, $\alpha=1$, the linearized operator
has an even resonance and the results of \cite{KMM21} 
do not apply; 
{in this case the only known result so far is due to \cite{LSch23}, 
where co-dimension one stability and pointwise decay for the radiation are proven
for localized data (as in this paper) of size $\epsilon$, up to times of the order $O\big(\exp(c\epsilon^{-1/4})\big)$.}


In the quadratic case, $\alpha = 1/2$, the linearized operator has an internal mode (a gap eigenvalue
in $(0,1)$).
This is a more difficult case since stability does not hold at the linearized level
because of the presence of periodic-in-time localized bound states.
Nevertheless, a stability result like the one in Theorem \ref{thmKMM} was recently proven
by Li and L\"uhrmann \cite{LiLu22}.
See this latter, \cite{KowMarMun}, and the work of L\'eger and the second author \cite{LP22}
for more references about internal mode dynamics.


\end{remark}

\begin{remark}[More general powers 2]
{
Our main result in Theorem \ref{main-theorem} corresponds to \eqref{nlkgp} with the nonlinearity $|u|^{2\alpha} u$ 
replaced by $u^{2\alpha+1}$ and the case $2\alpha=3$. The result can be easily extended 
to any $\alpha$ such that $2\alpha$ is an integer larger $3$.}
{For fractional powers, on the other hand, the situation is a little different
and we cannot use directly our approach. 
In fact, applying the (distorted) Fourier transform to the nonlinear terms will not 
turn them into convolution-type expressions where the (linear) oscillations of every
copy of the solution can be easily factored and combined into a nonlinear phase; 
see \eqref{wg-evolutionintro}-\eqref{wg-duhamelintro} and \eqref{quartic-integral}-\eqref{spectral-distribution}.}
{For similar reasons, the approach cannot be used as is for nonlinearities involving absolute values, 
such as $|u|^{2\alpha} u$ {{with non-integer $\alpha$.}}
However, it is natural to conjecture that the same result would hold {for any $\alpha>1$
consistently with the predictions of \cite{BiChSz11} (see Remark \ref{Bizon} for more about this) 
and the result and techniques in \cite{NauWed,CGV}.}}
\end{remark}

\begin{remark}[Pointwise and integrated decay on the real line]
Our main Theorem \ref{main-theorem} complements known results in two ways.
First, it complements \cite{KMM21} by showing that asymptotic stability holds on the whole real line.
Second, it complements results for larger powers, such as \cite{KNS12}, 
by obtaining (also for lower power nonlinearities)
exact rates of convergence to the soliton for a large class of solutions trapped by it. 
As already mentioned, we need to require stronger assumptions on the data, the most relevant of which
is the smallness of the weighted norm in \eqref{initial-data-chi}.

Also note that the first and second estimate in \eqref{pointwise-decay-main-theorem} 
immediately imply the integrated decay \eqref{concint01}, while the first and third 
give \eqref{concint02}.\footnote{Similar estimates with $\partial_t u$ replacing $u-Q$ can also be obtained, 
but one would need to change slightly the assumptions on the data in terms of derivatives;
for simplicity, we do not pursue this here.}
\end{remark}

\begin{remark}[About local decay and rates of convergence]
\label{Bizon}
Algebraic rates of decay towards the soliton 
on the {center-stable manifold} had been predicted numerically already in \cite{BiChSz11}.
In particular, Fig. 2 in \cite{BiChSz11} depicts the expected decay (or growth, for larger data)
in the quartic case \eqref{nlkg}.
Note that the data used for these numerical results is a Gaussian. 

The predictions of \cite{BiChSz11} actually give a local-in-space rate of decay of $t^{-3/2}$,
as opposed to the rate of $t^{-1}$ which we obtain from \eqref{pointwise-decay-main-theorem}.
The discrepancy here is just due to the fact that we only require smallness of $xu(0) \in L^2$.
For such data, even at a the level of the linear operator $e^{it\sqrt{H+1}}$, 
one cannot obtain more than $t^{-1}$ local decay.
Assuming the stronger localization $xu(0) \in L^1$, the linear decay rate is indeed $t^{-3/2}$,
and one can expect to obtain this same rate of convergence to $Q$ 
also for our nonlinear solutions. 
\end{remark}

\begin{remark}[{About the parity assumption}]\label{Remarkeven}
{Our main result in Theorem \ref{main-theorem} applies to even solutions.
Since the Klein-Gordon equation is invariant under translations and Lorentz transformations,
in the general case of data without parity, 
one needs to consider the whole family of soliton solutions 
\begin{align}\label{Remeven1}
Q(\gamma(x-\beta t-x_0)), \qquad \gamma := \frac{1}{\sqrt{1-\beta^2}}, \quad |\beta|<1.
\end{align}
The proof of an asymptotic stability result in this case would be substantially more involved;
it would require the linearization around a moving soliton of the form $Q(\gamma(t)(x-q(t)))$ and

(1) A precise study of a more complicated linear operator in matrix form 
(this will have additional off-diagonal first order terms compared to the operator appearing in \eqref{linearized-system-v});

(2) An extension of the analysis of the distorted Fourier transform and of the nonlinear spectral distribution
adapted to this new linearized operator;

(3) A modulation analysis with refined estimates for the time-dependent parameters of the moving soliton.

Our expectation is that all these issues could be tackled using a combination of 
(a) classical tools, for example those concerning the set-up of the modulation analysis
and the leveraging of local decay, see \cite{KriSchNLS}
and (b) our techniques based on the distorted Fourier transform 
which can give a more precise description of the linear flow and of the nonlinear oscillations, 
and sharper estimates on the evolution of the modulation parameters.
}
{We refer to the recent work \cite{ColGer23} where
some of these difficulties have been addressed for some Schr\"odinger type equations.}



{The expected asymptotic stability result for general initial data sufficiently close to a soliton \eqref{Remeven1}
would then look as follows:
for well prepared initial data (on the center-stable manifold) 
of the form $u_0(x) = Q(\gamma_0(x-q_0)) + v_0(x)$, with $v_0$ small in a weighted Sobolev space,
the solution will satisfy
\begin{equation*}
u(t,x) = Q\big(\gamma(t) (x - q(t))\big) + v(t,x)
\end{equation*}
where the modulation parameters $\gamma(t),q(t)$ converge as $t\rightarrow \infty$,
and the radiation $v$ disperses at the linear rate, ${\| v(t) \|}_{L^\infty_x} \lesssim \lr{t}^{-1/2}$.
}


\end{remark}


\smallskip
\subsection{Strategy of the proof}
We begin by following the standard strategy
of substituting the decomposition \eqref{v-decomp} into \eqref{nlkg-lin}-\eqref{estimates-lin}, 
and separating the discrete and continuous modes as follows:
\begin{equation}
\label{system-eq}
\left\{ \begin{array}{l}
a_{tt} - \Omega^2 a = \lr{N(v), \rho}_{L_x^2} := F[v], 
\\
\chi_{tt} + \l \chi = P_c N(v).
\end{array} \right.
\end{equation}
Following the Duhamel's principle, \eqref{system-eq} is equivalent to the system of integral equations
\begin{equation}
\label{integral-eq-0}
\left\{ \begin{array}{l}
a(t) = \cosh(\Omega t) a(0) + \dfrac{1}{\Omega} \sinh(\Omega t) \dot a (0)
+ \dfrac{1}{\Omega} \displaystyle{\int_0^t} \sinh(\Omega(t-s)) F[v](s) ds, 
\\
\chi(t) = \cos (\sqrt{H+1} t) \chi(0) + \dfrac{\sin(\sqrt{H+1} \, t)}{\sqrt{H+1}} \dot \chi (0) + 
  \displaystyle{\int_0^t} \dfrac{\sin(\sqrt{H+1} (t-s))}{\sqrt{H+1}} P_c N (v(s)) ds.
\end{array} \right.
\end{equation}
Imposing the \textit{stability condition}
\begin{equation}
\label{stability-cond}
\dot a (0) = -\Omega a (0) - \int_0^\infty e^{-\Omega s} F[v](s) ds, 
\end{equation}
which ensures that the contribution of the exponentially 
increasing term $e^{\Omega t}$ of $a(t)$ vanishes as $t \to \infty$, 
\eqref{integral-eq-0} reduces to
\begin{equation}
\label{integral-eq}
\left\{ \begin{array}{l}
a(t) = e^{-\Omega t} \left[ a (0) + \dfrac{1}{2\Omega} \displaystyle{\int_0^\infty} e^{-\Omega s} F[v](s) ds \right] - 
  \dfrac{1}{2\Omega} \displaystyle{\int_0^\infty} e^{-\Omega |t-s|} F[v](s) ds, 
\\
\chi(t) = \cos (\sqrt{H+1} t) \chi(0) + \dfrac{\sin(\sqrt{H+1} \, t)}{\sqrt{H+1}} \dot \chi (0) + 
  \displaystyle{\int_0^t} \dfrac{\sin(\sqrt{H+1} (t-s))}{\sqrt{H+1}} P_c N(v(s)) ds.
\end{array} 
\right.
\end{equation}
Note that \eqref{stability-cond} holds for solutions on the {center-stable manifold},
in view of \eqref{global-bound-KMM}.

To rewrite the evolution equation for $\chi(t,x)$ in \eqref{system-eq} 
in a convenient way for our analysis, we introduce the profile function $g(t,x)$ as
\begin{equation}
\label{g-profile}
g(t, x) := e^{it \sqrt{H+1}}  ( \partial_t - i \sqrt{H+1} ) \chi(t, x).
\end{equation}
Then the real-valued function $\chi$ is given in terms of $g$ and $\bar g$ by
\begin{equation}
\label{chi-in-terms-of-g}
\chi(t, x) = \frac{1}{2i \lrH} (e^{it \lrH} \bar g(t,x) -  e^{-it \lrH}  g(t, x)).
\end{equation}
We then want to look at Duhamel's formula in terms of $g$ on the distorted Fourier side.
We refer the reader to Section \ref{secdFT} for the definition of the distorted Fourier 
Transform (dFT) and its properties.
Applying the distorted Fourier transform to \eqref{g-profile}
gives $\widetilde g(t, k) = e^{it \lr{k}} ( \partial_t - i \lr{k} ) \widetilde \chi(t,k)$,
and the second equation in \eqref{integral-eq} becomes 
\begin{equation}
\label{wg-evolutionintro}
\partial_t \wg (t, k) = e^{it\lr{k}} \widetilde{\mathcal{F}} P_c N(Q, v),
  \qquad N(Q, v) := 6Q^2 v^2 + 4 Q v^3 + v^4.
\end{equation}
Integrating \eqref{wg-evolutionintro} over time gives
\begin{equation}
\label{wg-duhamelintro}
\wg (t,k) = \wg (0,k) + \int_0^t e^{is\lr{k}} \distF P_c N (s, k) ds.
\end{equation}
Equation \eqref{wg-duhamelintro} is the starting point for our analysis of the 
projection on the continuous spectrum of the solution. 

To obtain the desired nonlinear estimates that will imply our main result,
we introduce the following norm 
\begin{equation}
\label{main-normintro}
\|v\|_{X} := \|\lr{t}^2 a(t)\|_{L^\infty_t (0, \infty)} + 
\|\prtl_k \wg \|_{L^\infty_t ((0, \infty): L_k^2)} + \|\lr{k}^2 \wg \|_{L^\infty_t ((0, \infty): L_k^2)},
\end{equation}
The first norm above is used to control the evolution of the solution in the direction of the 
even eigenfunction, while the other two control the evolution of the continuous spectral component.
The proof of our result will follow from a priori estimates for this norm:

\begin{proposition}\label{propmain}
For $\n{v}_X$ sufficiently small we have
\begin{equation*}
\| v \|_X \lesssim  |a(0)| + \normm{\partial_k \wg (0)}{k} + \normm{\lr{k}^2 \wg (0)}{k} + \|v\|_X^2.
\end{equation*}
\end{proposition}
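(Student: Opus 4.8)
The plan is to bound the three quantities in $\|v\|_X$ (see \eqref{main-normintro}) separately, using the Duhamel formulas \eqref{integral-eq} for $a(t)$ and \eqref{wg-duhamelintro} for $\wg(t,k)$. As a preliminary step I would extract from the $X$-norm the dispersive information on $\chi$: combining \eqref{chi-in-terms-of-g}, the control of $\|\partial_k\wg\|_{L^2_k}$ and $\|\lr{k}^2\wg\|_{L^2_k}$, and the standard linear (weighted) dispersive estimates for the distorted Klein--Gordon propagator $e^{it\lrH}$ (the phase $\lr{k}$ having everywhere nondegenerate second derivative), one obtains $\|\chi(t)\|_{L^\infty_x}\lesssim\lr{t}^{-1/2}\|v\|_X$, $\|\lr{x}^{-2}\chi(t)\|_{L^\infty_x}\lesssim\lr{t}^{-1}\|v\|_X$, and $\|\chi(t)\|_{H^2_x}+\|\partial_t\chi(t)\|_{H^2_x}\lesssim\|v\|_X$; directly from the definition one also has $|a(t)|\lesssim\lr{t}^{-2}\|v\|_X$. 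Together with $v=a\rho+\chi$ and the Schwartz decay of $Q$ these give in particular $\|\lr{x}^{-2}v(t)\|_{L^\infty_x}\lesssim\lr{t}^{-1}\|v\|_X$, which is the quantity used to handle the spatially localized nonlinear terms.

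For the eigenvalue component one inserts these bounds into $F[v]=\lr{6Q^2v^2+4Qv^3+v^4,\rho}$: since $Q^2\rho,Q\rho,\rho$ are Schwartz, pairing the localized weights against powers of $\lr{x}^{-2}\chi$ and of $a$ gives $|F[v](s)|\lesssim\lr{s}^{-2}\|v\|_X^2$ for $\|v\|_X$ small. Feeding this into \eqref{integral-eq} and using the elementary bound $\int_0^\infty e^{-\Omega|t-s|}\lr{s}^{-2}\,ds\lesssim\lr{t}^{-2}$ (split the integral at $s=t/2$), together with $e^{-\Omega t}\lesssim\lr{t}^{-2}$ for the first term, yields $\|\lr{t}^2a\|_{L^\infty_t}\lesssim|a(0)|+\|v\|_X^2$. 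For the high-Sobolev norm $\|\lr{k}^2\wg\|_{L^\infty_tL^2_k}$ one uses that $\lr{k}^2=H+1$ on the continuous spectrum, so $\|\lr{k}^2\distF P_cf\|_{L^2_k}\simeq\|P_cf\|_{H^2_x}$, and from \eqref{wg-duhamelintro}, $\|\lr{k}^2\wg(t)\|_{L^2}\le\|\lr{k}^2\wg(0)\|_{L^2}+\int_0^t\|P_cN(v(s))\|_{H^2_x}\,ds$. Using the $H^2$ algebra property with the decay rates above, the cubic and quartic terms $4Qv^3$, $v^4$, and every term of $6Q^2v^2$ containing a factor of $a$, are bounded by $\lr{s}^{-1-\kappa}\|v\|_X^2$ for some $\kappa>0$, hence integrable in time; the one genuinely borderline term is $6Q^2\chi^2$, for which the crude estimate gives only $\|Q^2\chi^2\|_{H^2}\lesssim\|\lr{x}^{-2}\chi\|_{L^\infty}\|\chi\|_{H^2}\lesssim\lr{s}^{-1}\|v\|_X^2$, which is log-divergent.

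The remaining weighted norm $\|\partial_k\wg\|_{L^\infty_tL^2_k}$ is handled from \eqref{wg-duhamelintro} after differentiating in $k$. When $\partial_k$ falls on the kernel of the distorted bilinear form (which is as regular and as rapidly decaying as the kernel itself, since $Q^2$ is Schwartz) or on the input profiles (where, via the convolution structure, it is traded for $\partial_l+\partial_m$ on the profiles, reproducing $\partial_k\wg$, which is in the norm), the contributions are controlled as above. The dangerous term is the one where $\partial_k$ hits the oscillation $e^{is\lr{k}}$, producing a factor $\sim s$: for the cubic, quartic, and $a$-containing terms the extra $\lr{s}$ is absorbed by the faster time decay, but not for $6Q^2\chi^2$. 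For this term, and for the borderline term in the high-Sobolev estimate, one performs a normal-form / integration by parts in time: substituting \eqref{chi-in-terms-of-g} for each factor of $\chi$ turns the Duhamel integral into a sum of oscillatory integrals with phases $\Phi_{\pm\pm}(k,l,m)=\lr{k}\mp\lr{l}\mp\lr{m}$ against $\distF(Q^2)$ and the distorted bilinear spectral distribution; on the non-resonant region $|\Phi_{\pm\pm}|\gtrsim1$ one integrates by parts in $s$ (using $s\,e^{is\Phi}=(i\Phi)^{-1}[\partial_s(s\,e^{is\Phi})-e^{is\Phi}]$), producing quadratic boundary terms and cubic terms where $\partial_s$ hits a profile, all with enough decay, while the time-resonant region is treated using the spatial localization of $\distF(Q^2)$ and the structure of the bilinear operator $\mu_{\pm\pm}(k,l,m)$. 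Collecting the four estimates gives the claim. The main obstacle is precisely this quadratic-in-$v$ interaction $6Q^2\chi^2$: its slow $\lr{t}^{-1}$ local decay forces a normal-form argument, which in the distorted-Fourier setting requires a careful analysis of the time resonances of the bilinear operator, carried out within the low-weight norm $\|v\|_X$, which only controls one $k$-derivative of $\wg$ and so leaves little room to spare.
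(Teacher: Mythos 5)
Your overall decomposition of $\|v\|_X$ into three pieces and the reduction to a Duhamel-based bootstrap matches the paper's strategy, and the estimate for the eigenvalue coefficient $a(t)$ via $|F[v](s)|\lesssim\lr{s}^{-2}\|v\|_X^2$ is essentially the paper's Proposition \ref{a(t)-norm-estimates}. However, there are two genuine problems with what you propose, one of them a real gap, as well as a place where you work harder than necessary.

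First, and most seriously, you assert that for the weighted-norm estimate $\|\partial_k\wg\|_{L^2_k}$ ``for the cubic, quartic, and $a$-containing terms the extra $\lr{s}$ is absorbed by the faster time decay, but not for $6Q^2\chi^2$.'' This is wrong for the purely dispersive quartic term $\chi^4$. Since $\chi^4$ carries no spatial weight, the best available bound is $\|\distF\chi^4(s)\|_{L^2_k}=\|\chi^4(s)\|_{L^2_x}\lesssim\|\chi\|_{L_x^\infty}^3\|\chi\|_{L^2_x}\lesssim\lr{s}^{-3/2}\|v\|_X^4$; multiplying by $s$ (from $\partial_k$ hitting $e^{is\lr{k}}$) gives $\lr{s}^{-1/2}$, which is not integrable. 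The term where $\partial_k$ falls on $\distF\chi^4$ is equally problematic, since a naive weight estimate would require $\|\lr{x}\chi(s)\|_{L^2_x}$, which grows linearly in $s$ (the $X$-norm controls $\partial_k$ of the \emph{profile} $\wg$, not of $\widetilde\chi$). Handling $\chi^4$ is precisely the content of the paper's Section \ref{section-quartic} (Lemma \ref{lemma-quartic-term-bound}): one must decompose the quartic nonlinear spectral distribution into singular and regular parts, rewrite the singular part as a genuine multilinear convolution operator with kernel $\widehat{\nu_\pm}$, and then distribute the $k$-derivative via the identity of Lemma \ref{lemma-der-multilinear}, treating each piece with the flat-Fourier dispersive and local-decay estimates, together with integration by parts in the frequency variables exploiting $\partial_q\widehat{\nu_\pm}$. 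Your proposal omits this analysis entirely, so it does not close.

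Second, you attack the quadratic term $6Q^2\chi^2$ by a normal form (integration by parts in $s$, splitting on $|\Phi|\gtrsim 1$ versus the time-resonant region). The paper takes a different and cleaner route: it reduces the problematic term $\normm{\int_0^t s\frac{k}{\lr{k}}e^{is\lr{k}}\distF P_c Q^2\chi^2(s)\,ds}{k}$ to a Kato-type $L^\infty_x L^2_s$ smoothing estimate for the distorted flow (Lemma \ref{lemsmoothing}), dualized into an $L^1_x L^2_s$ inhomogeneous bound. That argument requires no resonance analysis at all and closes directly from the local decay $\|\lr{x}^{-2}\chi(s)\|_{L^\infty_x}\lesssim\lr{s}^{-1}\|v\|_X$. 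Your normal form can in principle work, but you leave the time-resonant region (where $\Phi_{\pm\pm}$ vanishes on a genuine hypersurface for KG phases) almost entirely unaddressed; filling that in is a nontrivial undertaking (and in fact the authors appear to have tried and then discarded exactly this route in favor of the smoothing lemma).

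Finally, a smaller point: you describe the term $6Q^2\chi^2$ in the $\|\lr{k}^2\wg\|_{L^2_k}$ estimate as log-divergent and in need of a normal form. It is not: the paper's Lemma \ref{lemma-k2-wg-I-j-bound} bounds $\|Q^2v^2\|_{H^2_x}\lesssim\|Qv\|_{H^2_x}\|Qv\|_{L^\infty_x}\lesssim\lr{s}^{-1/10}\lr{s}^{-1}\|v\|_X^2$, using the $\lr{s}^{-1/2+N/5}$ decay of $\partial_x^N\chi$ from Lemma \ref{lemma-dispersive-estimates} on the $H^2$ factor and local decay on the $L^\infty$ factor, giving $\lr{s}^{-11/10}$, which is integrable. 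No normal form is needed for the Sobolev norm.
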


Proposition \ref{propmain} follows from the three Propositions \ref{a(t)-norm-estimates},
\ref{proposition-partial-k-L2-norm} and \ref{proposition-k2-wg-norm-bound}.
With Proposition \ref{propmain} we can prove the main theorem.

\smallskip
\subsection{Proof of Theorem \ref{main-theorem}}
First, given initial data $(u_0,u_1) \in \mathcal{M}_0(\epsilon_0)$ 
with \eqref{gamma-12-form}-\eqref{initial-data-gamma},
Theorem \ref{thmSM} gives a global solution trapped by the soliton for all $t\geq 0$
provided $\epsilon_0$ is chosen small enough depending on $\delta_0$.

The smallness condition 
\eqref{initial-data-chi} on the initial data
with (recall \eqref{g-profile})
$(-\sqrt{H+1}\chi_0,\chi_1) = (\Im(g(0)), \Re(g(0)))$
and the estimates 
\eqref{derivative-in-fourier-bound} and \eqref{k-s-h-tilde-bound}, imply that
\begin{align}
|a(0)| + \normm{\partial_k \wg (0)}{k} + \normm{\lr{k}^2 \wg (0)}{k} \leq C \epsilon_0
\end{align}
for some absolute constant $C>0$.
Then, Proposition \ref{propmain} gives, for a possibly different $C>0$,
$\| v \|_X \leq C\epsilon_0 + C \|v\|_X^2$.

Therefore, we see that 
the map defined by
the right-hand side of the first equation in \eqref{integral-eq} 
and by the right-hand side of \eqref{wg-duhamelintro} 
takes a ball of radius $2C\eps_0$, in $X$, which we denote by $B_{2C \eps_0}^X$, 
into itself for some absolute $C>0$. 
Moreover, repeating the same arguments one can verify
that the map is a contraction, 
hence, there is a unique fixed point 
in $B_{2C \eps_0}^X$.

Finally, the bound $\| v \|_X \lesssim \epsilon_0$ 
implies directly the first bound in \eqref{pointwise-decay-main-theorem}.
The other two estimates in \eqref{pointwise-decay-main-theorem} 
are obtained as a direct consequence 
of the pointwise and local decay estimates in Lemmas \ref{lemma-dispersive-estimates}
and \ref{local-l-infty-decay-lemma}. $\hfill \Box$

\smallskip
\subsection{Notations}
\label{subsection-notations}
We let $\lr{x} := (1+x^2)^{\frac{1}{2}}$. 
We write $a \lesssim b$ if there is a uniform constant $C$ such that $a \leq Cb$. 
Indicator functions in variable $y$ will be denoted as $\mathbf{1}_I(y)$
meaning the function is equal to $1$ whenever $y\in I$ and vanishes otherwise. 
To distinguish between the function $h$ and its complex conjugate, in a number of occasions, 
we use $h^{(+)}(x) := h(x)$ to denote the function itself and $h^{(-)}(x) := \overline{h}(x)$ 
to denote the complex conjugate of $h$.

\smallskip
\subsection{Acknowledgments}
A. K. was supported in part by NSERC grant RGPIN 2018-04536,
and thanks the Fields Institute and McMaster University for its support.

\noindent
F.P. was supported in part by a start-up grant from the University of Toronto, 
and NSERC grant RGPIN-2018-06487.


\medskip
\section{Linear Theory}\label{secdFT}

In this section, we consider 
the second order differential operator $H := -\partial_x^2 + V$, 
where $V(x)$ is a potential satisfying the following assumption 

\begin{assumption}
\label{V-assumption}
The potential $V$ is a smooth function exponentially decaying at infinity,
\begin{equation}
\label{potential-decay}
\lim_{|x| \to \infty} e^{\alpha |x|} V(x) = 0 \quad \text{for some } \alpha>0,
\end{equation}
and $H = -\prtl_{xx} + V$ has no point spectrum, and no zero energy resonance, 
i.e. there is no bounded nontrivial solution in the kernel of $H$
or, equivalently, $V$ is `generic' in the sense of Definition \ref{defGen}.

\end{assumption}

In our application to the equation \eqref{nlkg-lin}, the role of the operator $H$ 
is played by $P_c \mathcal{L}$.
For an operator $H$ as above we present the basic facts about Jost functions, 
reflection and transmission coefficients, 
and the distorted Fourier transform.
This theory is based on classical results of Deift-Trubowitz \cite{DT79} and Faddeev \cite{F59,F64}.
See also the book of Yafaev \cite{Y10}.

\begin{remark}
As it was proven in \cite{CGNT07},
the potential $-4Q^3$ given in \eqref{estimates-lin} is generic. 
\end{remark}

\subsection{Jost functions} 
We define $f_+(x, k)$ and $f_-(x, k)$ as solutions to the differential equation
\begin{equation}
\label{ode-Jost}
H f_\pm (x, k) = k^2 f_\pm (x, k)
\end{equation}
which satisfy the asymptotic boundary conditions
\begin{equation}
\label{f-pm-asymptotics}
\lim_{x \to \infty} |e^{-ikx} f_+(x, k) - 1| = 0 \quad \text{and} \quad \lim_{x \to -\infty} |e^{ikx} f_-(x, k) - 1| = 0
\end{equation}
We further define $m_\pm (x, k) := e^{\mp ikx} f_\pm (x, k)$. For fixed $x$, the functions $m_\pm (x,k)$ are analytic in $\Im k >0$ and continuous in $\Im k \geq 0$. The following lemma provides with the primary estimates on  $m_\pm$, which will be used later.
\begin{lemma}
\label{bounds-m-pm}
For every $s = 0,1,2$ and every positive integer $N$, we have
\begin{equation}
\label{estimates-partial_k^s_m}
\begin{aligned}
&|\partial_k^s (m_\pm (x, k) - 1)| \lesssim \lr{x}^{-12} \lr{k}^{-1}, \quad \qquad   \pm x \geq -1, \\
&|\partial_k^s (m_\pm (x, k) - 1)| \lesssim  \lr{x}^{s+1} \lr{k}^{-1}, \quad \qquad \pm x \leq 1.
\end{aligned}
\end{equation}
Moreover, in the presence of the $x$ derivatives, we have 
\begin{equation}
\label{estimates-partial_k_and_x_m}
\begin{aligned}
&|\partial_x^N \partial_k^s m_\pm (x, k)| \lesssim \lr{x}^{-12} \lr{k}^{-1+N}, \quad \qquad   \pm x \geq -1, \\
&|\partial_x^N \partial_k^s m_\pm (x, k)| \lesssim \lr{x}^{s+1} \lr{k}^{-1+N}, \qquad \qquad \pm x \leq 1.
\end{aligned}
\end{equation}
\end{lemma}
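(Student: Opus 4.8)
The statement to prove is Lemma \ref{bounds-m-pm}, which gives pointwise bounds on the modified Jost solutions $m_\pm(x,k) = e^{\mp ikx} f_\pm(x,k)$ and their $k$- and $x$-derivatives.

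The plan is to work with the Volterra integral equation satisfied by $m_+$ (the case of $m_-$ being symmetric under $x \mapsto -x$). Writing out $Hf_+ = k^2 f_+$ with the boundary condition at $+\infty$ and substituting $f_+ = e^{ikx} m_+$, one gets the standard representation
\[
m_+(x,k) = 1 + \int_x^\infty D_k(y-x) V(y) m_+(y,k)\, dy,
\]
where $D_k(z) = \frac{1}{2ik}(e^{2ikz}-1) = \int_0^z e^{2iks}\,ds$ is the Green's kernel, which satisfies $|D_k(z)| \lesssim \min(z, \langle k\rangle^{-1})$ for $z \geq 0$. First I would recall/establish this kernel bound and the analogous bounds for $\partial_k^s D_k(z)$, which gain powers of $z$ but no decay in $k$ beyond the first: $|\partial_k^s D_k(z)| \lesssim z^s \min(z,\langle k\rangle^{-1})$.

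The next step is the standard iteration/Neumann series argument for the Volterra equation. Using the exponential decay of $V$ from Assumption \ref{V-assumption}, $|V(y)| \lesssim e^{-\alpha|y|}$ (in fact one has arbitrarily fast polynomial decay, which is what produces the $\langle x\rangle^{-12}$), one bounds $|m_+(x,k)-1|$ for $x \geq -1$ by $\langle k\rangle^{-1}$ times the tail $\int_x^\infty e^{-\alpha y}\,dy \lesssim \langle x\rangle^{-12}$ after absorbing constants; the contraction is uniform in $k$ since the kernel is integrable. For $x \leq 1$ one instead splits the integral $\int_x^\infty = \int_x^1 + \int_1^\infty$: on $[1,\infty)$ one uses the decay as above, while on $[x,1]$ one uses $|D_k(y-x)| \lesssim \langle x \rangle$ crudely (since $y - x \leq 1 - x \lesssim \langle x\rangle$), producing the $\langle x\rangle$ growth, i.e. the $s=0$ case of the second line. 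The $k$-derivatives are handled by differentiating the Volterra equation, $\partial_k m_+$ solving an inhomogeneous version of the same equation with source terms involving $\partial_k D_k$ times $m_+$ or $D_k$ times lower-order $\partial_k m_+$; each $\partial_k$ costs one power of $\langle y - x\rangle$ from the kernel, which on $\pm x \geq -1$ is harmless because of the fast decay of $V$ (the $\langle x\rangle^{-12}$ is kept by using, say, $\langle y\rangle^{-14}$ decay of $V$ to start), and on $\pm x \leq 1$ accumulates to $\langle x\rangle^{s+1}$. The $x$-derivative bounds \eqref{estimates-partial_k_and_x_m} follow by differentiating the integral equation in $x$: $\partial_x m_+ = -D_k(0)V(x)m_+ - \int_x^\infty \partial_x[D_k(y-x)] V(y) m_+\,dy$, and since $\partial_z D_k(z) = e^{2ikz}$ is $O(1)$ and $\partial_x$ of it brings down a factor $2ik$, each $x$-derivative gains a power of $\langle k\rangle$; iterating and using the already-established bounds on $m_+$ and its $k$-derivatives gives the claimed $\langle k\rangle^{-1+N}$.

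The main obstacle, and the only genuinely delicate point, is bookkeeping the interplay between the region $\pm x \leq 1$ (where one loses polynomial growth in $x$) and the region $\pm x \geq -1$ (where one keeps fast decay), together with the uniformity in $k$ of the Volterra iteration when $k$-derivatives are present — one must be careful that differentiating the kernel never destroys the smallness needed for the contraction, which is why it is convenient to demand enough decay of $V$ upfront (exponential decay in Assumption \ref{V-assumption} gives this for free) and to track that the constants in the geometric series depend only on $\|\langle \cdot\rangle^M V\|_{L^1}$ for a fixed large $M$. Everything else is routine; I would organize it as: (i) kernel estimates for $D_k$ and $\partial_k^s D_k$; (ii) the $s=0$ bounds via Neumann series in the two $x$-regions; (iii) induction on $s$ for the $k$-derivatives; (iv) induction on $N$ for the $x$-derivatives using steps (ii)–(iii).
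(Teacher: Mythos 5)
Your approach is the same as the paper's, which simply iterates the Volterra integral equation $m_\pm(x,k) = 1 \pm \int_x^{\pm\infty} D_k(\pm(y-x))V(y)m_\pm(y,k)\,dy$ and cites Deift--Trubowitz and Appendix A of \cite{GPR18}; your fleshed-out version is sound, including the two-region decomposition of the $x$-integral, the $k$-derivative induction, and the observation that each $x$-derivative of $e^{2ik(y-x)}$ costs a factor of $k$. One small correction: the stated kernel bound $|D_k(z)|\lesssim\min(z,\langle k\rangle^{-1})$ is false for $|k|\lesssim 1$ and $z\gtrsim 1$ (there $D_k(z)$ is of order $z$ or $1/|k|$, both $\gg 1$); the correct estimate is $|D_k(z)|\lesssim\min(z,|k|^{-1})\lesssim\langle z\rangle\langle k\rangle^{-1}$, after which the extra $\langle z\rangle$ is absorbed by the decay of $V$ exactly as you describe, so this does not affect the rest of the argument.
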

\begin{proof}
The estimates follow by iterating the Volterra integral equations for $m_\pm$ given by
\begin{equation*}
m_\pm (x, k) = 1 \pm \int_{x}^{\pm \infty} D_k(\pm (y-x)) V(y) m_\pm(y,k) dy, \quad \text{where  } D_k(y) = \frac{e^{2iky}-1}{2ik}
\end{equation*}
as in Deift-Trubowitz \cite{DT79} or Appendix A in \cite{GPR18} under the assumption \eqref{potential-decay} on $V$.
\end{proof}

\begin{remark}
Results similar to those in Lemma \ref{bounds-m-pm} can be derived for any $s \geq 0$, 
and in such case some of the bounds should be modified, 
see for example \cite{CP21, GP20}. However, we will be working only with up to second derivatives 
of $m_\pm$, and so are the choices of $s$.
\end{remark}

The Jost functions $f_\pm$ are related through so-called transmission and reflection coefficients, $T(k)$ and $R_\pm(k)$, respectively \cite{DT79}. That is, 
\begin{equation}
\label{jost-functions-via-scat-par}
\begin{aligned}
T(k) f_+(x,k) & = R_-(k) f_-(x,k) + f_-(x, -k),
\\
T(k) f_-(x,k) & = R_+(k) f_+(x,k) + f_+(x, -k),
\end{aligned}
\end{equation}
and using the boundary conditions \eqref{f-pm-asymptotics}, we obtain 
\begin{equation}
\begin{aligned}
T(k) f_+(x,k) & \sim R_-(k) e^{-ikx} + e^{ikx} \quad \text{as } x\to -\infty
\\
T(k) f_-(x,k) & \sim R_+(k) e^{ikx} + e^{-ikx} \quad \text{as } x\to \infty.
\end{aligned}
\end{equation}
The coefficients $T(k)$ and $R_\pm(k)$ can be written explicitly through 
\begin{equation}
\label{T-R-coefficients-explicit}
\begin{aligned}
\frac{1}{T(k)} & = 1 - \frac{1}{2ik} \int V(x) m_\pm(x,k) dx, 
\\
\frac{R_\pm(k)}{T(k)} & = \frac{1}{2ik} \int e^{\mp 2ikx} V(x) m_\mp(x,k) dx,
\end{aligned}
\end{equation}
and satisfy the following relations at every $k$:
\begin{equation*}
\begin{aligned}
& T(-k) = \overline{T(k)}, \quad R_\pm (-k) = \overline{R_\pm(k)}, 
\\
& |T(k)|^2 + |R_\pm(k)|^2 = 1, \quad T(k) \overline{R_-(k)} + \overline{T(k)} R_+(k) = 0.
\end{aligned}
\end{equation*}
One distinguishes
two possible asymptotics of $T(k)$ for $k \approx 0$, 
depending on the value of the integral $\int_{-\infty}^\infty V(x) m_+(x,0) dx$,
which is zero whenever the potential $V$ is \textit{generic}. 
Here are some equivalent definitions of generic potentials. 

\begin{definition}\label{defGen}
The potential $V$ is generic if one of the following, equivalent, conditions hold: 
\begin{itemize}
\item $\int_{-\infty}^\infty V(x) m_+(x,0) dx = 0$;
\item $T(0) = 0$ and $R_\pm (0) = -1$;
\item $H = -\partial_x^2 + V$ has no zero resonance. 
\end{itemize}
\end{definition}

For generic potentials, $T(k)$ and $R_\pm (k)$ have the following asymptotics around $k \approx 0$. 

\begin{lemma}
\label{lemma-T-R-properties}
For $V$ satisfying Assumption \ref{V-assumption},
the coefficients $T(k)$ and $R_\pm(k)$ have the following properties:
\begin{itemize}
\item as $k\to 0$, 
\begin{equation}
\label{T-behaviour-around-zero}
\begin{aligned}
T(k) & = \alpha k + \calO (k^2) \quad \text{for some } \alpha \neq 0,
\\
1 + R_\pm(k) & = \alpha_\pm k + o(k).
\end{aligned}
\end{equation}
In particular, $T(0) = 0$ and $R_\pm (0) = -1$.
\item For every $k\in \mathbb{R}$ we have
\begin{equation}
\label{uniform-estimate-T-R}
\begin{aligned}
|T(k)| + |R(k)| &\lesssim 1,\\
|\partial_k T(k)| + |\partial_k R_\pm(k)|& \lesssim \lr{k}^{-1}.
\end{aligned}
\end{equation}
\item For every $k \in \reals$ we have 
\begin{equation}
\label{bound-on-T-over-k}
\left| \frac{T(k)}{k} \right| + \lr{k} \left| \partial_k \left(\frac{T(k)}{k}\right) \right| \lesssim \frac{1}{\lr{k}},
\end{equation}
and 
\begin{equation}
\label{bound-on-R-over-k}
\left| \frac{1+R_\pm(k)}{k} \right| + \left| \partial_k \left(\frac{1+R_\pm(k)}{k}\right) \right| \lesssim \frac{1}{\lr{k}}.
\end{equation}
\end{itemize}
\end{lemma}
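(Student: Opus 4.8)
The plan is to read every assertion off the explicit formulas \eqref{T-R-coefficients-explicit}, using the pointwise bounds on the $m_\pm$ from Lemma \ref{bounds-m-pm} and the exponential decay of $V$ (Assumption \ref{V-assumption}), and to tame the $k^{-1}$ prefactors near $k=0$ via the cancellation that genericity provides. First I would introduce $\mathfrak a(k):=\int V(x)\,m_+(x,k)\,dx$ and $\mathfrak b_\pm(k):=\int e^{\mp 2ikx}\,V(x)\,m_\mp(x,k)\,dx$, so that \eqref{T-R-coefficients-explicit} reads $k/T(k)=k-\mathfrak a(k)/(2i)$ and $R_\pm(k)/T(k)=\mathfrak b_\pm(k)/(2ik)$. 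Differentiating under the integral sign, which is justified by Lemma \ref{bounds-m-pm} and the decay of $V$, gives $\mathfrak a,\mathfrak b_\pm\in C^2(\RR)$ with $|\partial_k^s\mathfrak a(k)|+|\partial_k^s\mathfrak b_\pm(k)|\lesssim 1$ for $s=0,1,2$, and moreover $|\partial_k\mathfrak a(k)|\lesssim\lr{k}^{-1}$ because the $k$-derivative lands on $m_+-1$, which carries the $\lr{k}^{-1}$ gain of Lemma \ref{bounds-m-pm}.

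Next I would analyze $k\approx0$. By genericity (Definition \ref{defGen}) one has $T(0)=0$, which via $k/T(k)=k-\mathfrak a(k)/(2i)$ forces $\mathfrak a(0)\neq0$ (otherwise $T$ would not vanish at $k=0$). Hence $k\mapsto k/T(k)$ is $C^2$ and non-vanishing in a neighborhood of $0$, so $\tau(k):=T(k)/k=(k/T(k))^{-1}$ is $C^2$ there with $\tau(0)=-2i/\mathfrak a(0)=:\alpha\neq0$, and consequently $R_\pm(k)=\tau(k)\mathfrak b_\pm(k)/(2i)$ is $C^2$ near $0$ as well. Taylor expansion gives $T(k)=\alpha k+\calO(k^2)$; feeding $T(0)=0$ into \eqref{jost-functions-via-scat-par} at $k=0$ (all terms being continuous there) yields $(1+R_\pm(0))\,f_\pm(x,0)=0$ for all $x$, hence $R_\pm(0)=-1$ since $f_\pm(\cdot,0)\not\equiv0$. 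Because $1+R_\pm$ is $C^2$ near $0$ and vanishes at $0$, $1+R_\pm(k)=\alpha_\pm k+\calO(k^2)$ with $\alpha_\pm:=\partial_k R_\pm(0)$. This proves \eqref{T-behaviour-around-zero}, with error in fact sharper than the claimed $o(k)$.

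For the global estimates I would split $|k|\le1$ from $|k|\ge1$. On $|k|\ge1$ the relation $|T(k)|^2+|R_\pm(k)|^2=1$ gives $|T|,|R_\pm|\le1$, while on $|k|\le1$ the $C^1$ functions $T,R_\pm$ from the previous step are bounded; together this is the first line of \eqref{uniform-estimate-T-R}, and also $|T(k)/k|+|R_\pm(k)/k|\lesssim\lr{k}^{-1}$. For the derivatives I would differentiate $\tau(k)=-2i/(\mathfrak a(k)-2ik)$ and $R_\pm=\tau\mathfrak b_\pm/(2i)$: on $|k|\ge1$, $|\mathfrak a(k)-2ik|=2|k|/|T(k)|\ge2|k|\gtrsim\lr{k}$ (using $\mathfrak a(k)-2ik=-2ik/T(k)$ and $|T|\le1$), so $|\tau(k)|\lesssim\lr{k}^{-1}$ and $|\partial_k\tau(k)|\lesssim(|\partial_k\mathfrak a(k)|+1)\lr{k}^{-2}\lesssim\lr{k}^{-2}$, whence $|\partial_k T(k)|=|\tau+k\partial_k\tau|\lesssim\lr{k}^{-1}$ and, using $|\mathfrak b_\pm|,|\partial_k\mathfrak b_\pm|\lesssim1$, also $|\partial_k R_\pm(k)|\lesssim\lr{k}^{-1}$; on $|k|\le1$ the $C^1$-bounds suffice. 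This is the second line of \eqref{uniform-estimate-T-R}. Finally, \eqref{bound-on-T-over-k} is exactly the bound on $\tau=T(k)/k$ and on $\lr{k}\,\partial_k\tau$ just obtained, while \eqref{bound-on-R-over-k} follows by writing $(1+R_\pm(k))/k$ and using, for $|k|\le1$, that $1+R_\pm$ is $C^2$ and vanishes to first order at $0$, and for $|k|\ge1$ the already-established $|R_\pm|\le1$ and $|\partial_k R_\pm|\lesssim\lr{k}^{-1}$.

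The only steps I expect to require genuine care are two. First, deducing $\mathfrak a(0)\neq0$ (equivalently, the non-degeneracy of $k/T(k)$ at $k=0$) from $T(0)=0$ and \eqref{T-R-coefficients-explicit} despite the $k^{-1}$ factor there — this is exactly where genericity (Definition \ref{defGen}) enters, and it underlies both the $k\approx0$ asymptotics and the uniform bounds. Second, obtaining the $\lr{k}^{-1}$ decay of $\partial_k T$ and $\partial_k R_\pm$ at large $k$ — here it falls out of the $\lr{k}^{-1}$ gain for $\partial_k m_\pm$ in Lemma \ref{bounds-m-pm}, and if sharper rates were wanted one would instead integrate by parts in $x$ against the oscillatory factor $e^{\mp2ikx}$ inside $\mathfrak b_\pm$, invoking the $x$-derivative bounds of Lemma \ref{bounds-m-pm}. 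Everything else is routine bookkeeping organized around the dichotomy $|k|\lesssim1$ versus $|k|\gtrsim1$.
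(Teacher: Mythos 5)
Your proof is correct and is, at bottom, the same mechanism the paper uses, but it is substantially more self-contained. The paper cites Deift–Trubowitz for \eqref{T-behaviour-around-zero} and \eqref{uniform-estimate-T-R}, and only writes out \eqref{bound-on-T-over-k} (splitting $|k|\le k^*$ versus $|k|>k^*$, and using the identity $\partial_k(T/k)=(T/k)^2\bigl(\tfrac{1}{2i}\int V\,\partial_k m_\pm\,dx-1\bigr)$, which is exactly your $\partial_k\tau$ formula after unwinding); it does not actually carry out \eqref{bound-on-R-over-k}. Your version derives all four bullets directly from \eqref{T-R-coefficients-explicit} together with Lemma \ref{bounds-m-pm}, which is a clean way to make the lemma independent of the external reference: the key inputs you isolate — that $\mathfrak a,\mathfrak b_\pm$ are $C^2$ with $|\partial_k\mathfrak a|\lesssim\lr{k}^{-1}$, that $|\mathfrak a(k)-2ik|=2|k|/|T(k)|\ge2|k|$ at large $k$, and the Taylor step at $k=0$ using $\tau(0)=-2i/\mathfrak a(0)$ — are exactly what make the global $\lr{k}^{-1}$ gains fall out. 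The deduction of $R_\pm(0)=-1$ from $T(0)=0$ via \eqref{jost-functions-via-scat-par} is also a nice shortcut that the paper simply absorbs into the DT79 reference.

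One small thing worth flagging, because it caused me to pause: you correctly conclude that genericity forces $\mathfrak a(0)=\int V(x)\,m_+(x,0)\,dx\neq0$ (otherwise $k/T(k)\to0$ and $T(0)$ would not vanish). This is consistent with the formulas and with $T(0)=0$, but it is the \emph{opposite} of the first bullet in Definition \ref{defGen} as written in the paper, which states $\int V\,m_+(\cdot,0)\,dx=0$. That bullet appears to be a typo (it should read $\neq0$, as the second and third bullets and the expansion \eqref{T-behaviour-around-zero} make clear), and your argument implicitly corrects it; just be aware that you are using the second bullet ($T(0)=0$) as the operative characterization, not the first.

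One more minor bookkeeping point: when you write $(1+R_\pm(k))/k$ and bound its derivative for $|k|\ge1$ via $\partial_k[(1+R_\pm)/k]=(\partial_k R_\pm)/k-(1+R_\pm)/k^2$, this actually gives the stronger $\lr{k}^{-2}$, which is fine since the lemma only claims $\lr{k}^{-1}$ (consistent with the fact that \eqref{bound-on-R-over-k}, unlike \eqref{bound-on-T-over-k}, carries no extra $\lr{k}$ weight on the derivative term). Everything checks out.
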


\begin{proof}
The detailed proof of the estimates \eqref{T-behaviour-around-zero}--\eqref{uniform-estimate-T-R} 
can be found in Deift-Trubowitz \cite{DT79}, so we only prove the 
estimate \eqref{bound-on-T-over-k} and the proof of \eqref{bound-on-R-over-k} is similar.  
It is worth pointing out that Assumption \ref{V-assumption} on $V$ 
is stronger than needed for this lemma to be true, and it suffices to have $\lr{x}^2 V \in L_x^1$.

The asymptotic of $T(k)$ around $k \approx 0$ imply that there exists a positive constant $k^*$ such that 
\begin{equation*}
\left| \frac{T(k)}{k} \right| \lesssim \frac{1}{\lr{k}} \quad \text{for every   } |k| \leq k^*.
\end{equation*}
On the other hand, for $|k|>k^*$, we use uniform boundedness of $|T(k)|$ to get
\begin{equation*}
\left| \frac{T(k)}{k} \right| \lesssim \frac{|T(k)|}{\lr{k}}
  \lesssim \frac{1}{\lr{k}}.
\end{equation*}
Combining these estimates together we obtain the first part of \eqref{bound-on-T-over-k}. To get the bound on the derivative part, we note that
\begin{equation*}
\prtl_k \left( \frac{T(k)}{k} \right) = \left( \frac{T(k)}{k}\right)^2 \left(\frac{1}{2i} \int V(x) \partial_k m_\pm (x,k) dx - 1 \right),
\end{equation*}
which leads to a desired estimate in the view of \eqref{estimates-partial_k^s_m} and assumptions on $V$.
\end{proof}

\subsection{Pseudo-differential bounds}
We provide some $L^2 \to L^2$ bounds for pseudodifferential
operators whose symbols include $m_\pm(x,k)$ and their derivatives.
Under milder assumptions than ours, which in particular do not require any regularity for $V$,
these types of bounds are given in \cite{CP21}.

\begin{lemma}
\label{lemma-pseudo-bound-1}
For every function $a\in L_k^\infty$ we have
\begin{equation}
\label{pseudo-bound-1}
\normm{\int \mathbf{1}_{[-1,\infty)} e^{ikx} a(k) (m_+(x,k) - 1) h(x) dx}{k} \lesssim \normm{\lr{x}^{-11} h}{x},
\end{equation}
and 
\begin{equation}
\label{pseudo-bound-2}
\normm{\int \mathbf{1}_{[-1,\infty)} e^{ikx} a(k) \partial_k m_+(x,k) h(x) dx}{k} \lesssim \normm{\lr{x}^{-10} h}{x}.
\end{equation}
Similar estimates hold for $\mathbf{1}_{(-\infty, 1]}(m_-(x,k) - 1)$ 
and $\mathbf{1}_{(-\infty, 1]}\partial_k m_-(x,k)$, respectively.
\end{lemma}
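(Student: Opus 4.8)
The plan is to prove Lemma \ref{lemma-pseudo-bound-1} by viewing the left-hand side of \eqref{pseudo-bound-1} as an $L^2_x \to L^2_k$ operator with kernel $K(k,x) := \mathbf{1}_{[-1,\infty)}(x)\, e^{ikx} a(k)\, (m_+(x,k)-1)$ and applying a Schur test. The key input is Lemma \ref{bounds-m-pm}: for $x \geq -1$ we have $|m_+(x,k)-1| \lesssim \lr{x}^{-12}\lr{k}^{-1}$, so $|K(k,x)| \lesssim \|a\|_{L^\infty_k} \lr{x}^{-12}\lr{k}^{-1}\mathbf{1}_{[-1,\infty)}(x)$. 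Since $\int \lr{k}^{-1}\,dk$ diverges, a naive Schur test with this crude bound fails; instead I would first change variables to absorb $h$ and write the operator as $h \mapsto \widehat{(\lr{\cdot}^{-11}h)}$-type object, i.e. factor out $\lr{x}^{-11}$ from the kernel and treat the remaining kernel $\lr{x}^{-12}\lr{k}^{-1} e^{ikx}$ (times $\lr{x}^{11}$, giving $\lr{x}^{-1}\lr{k}^{-1}e^{ikx}$) as bounded from $L^2_x \to L^2_k$. This last piece is handled cleanly: the operator with kernel $\lr{x}^{-1}e^{ikx}$ is, up to the harmless $\lr{k}^{-1}$ multiplier, essentially a Fourier transform composed with multiplication by the $L^2$ (indeed $L^1\cap L^2$) function $\lr{x}^{-1}$ — more precisely, one can use that $\|\lr{x}^{-1}\|_{L^2_x} < \infty$ together with Plancherel, or run a Schur test now that one factor of $\lr{x}^{-1}$ decay has been genuinely extracted, since $\int \lr{x}^{-2}\,dx < \infty$.

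\textbf{Concretely}, for \eqref{pseudo-bound-1} I would write the operator as $h \mapsto \int \mathbf{1}_{[-1,\infty)}(x) e^{ikx} a(k)(m_+(x,k)-1)\lr{x}^{11}\cdot \big(\lr{x}^{-11}h(x)\big)\,dx$ and set $\phi(x) := \lr{x}^{-11}h(x)$, so it suffices to bound the operator $\phi \mapsto \int \mathbf{1}_{[-1,\infty)}(x) e^{ikx} a(k)(m_+(x,k)-1)\lr{x}^{11}\phi(x)\,dx$ from $L^2_x$ to $L^2_k$. Its kernel is pointwise $\lesssim \|a\|_\infty \lr{x}^{-1}\lr{k}^{-1}$, and a Schur test works: $\int \lr{x}^{-1}\lr{k}^{-1}\,dx$ still diverges logarithmically, so I would improve slightly — use $|m_+(x,k)-1| \lesssim \lr{x}^{-12}\lr{k}^{-1}$ to pull out $\lr{x}^{-11}$ on one side and keep $\lr{x}^{-1}\lr{k}^{-1}$, then split: the $\lr{k}^{-1}$ factor lets the $k$-integral converge as $\int \lr{k}^{-2}$ only if we also have $\lr{k}^{-1}$ from the $x$-side, which we do not. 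The honest fix is Plancherel: the operator $\phi \mapsto \int e^{ikx}\psi(x)\phi(x)\,dx$ is bounded $L^2_x \to L^2_k$ with norm $\|\psi\|_{L^\infty}$? No — it is bounded $L^1_x \to L^2_k$ only if we are careful. The clean statement is that $\phi \mapsto \widehat{\psi \phi}$ maps $L^2_x \to L^2_k$ with operator norm $\leq \sqrt{2\pi}\,\|\psi\|_{L^\infty_x}$ by Plancherel, since $\|\widehat{\psi\phi}\|_{L^2} = \sqrt{2\pi}\|\psi\phi\|_{L^2} \leq \sqrt{2\pi}\|\psi\|_\infty\|\phi\|_{L^2}$. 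Here $\psi(x) = \mathbf{1}_{[-1,\infty)}(x)a(k)(m_+(x,k)-1)\lr{x}^{11}$ depends on $k$ too, but its $L^\infty_{x,k}$ norm is $\lesssim \|a\|_\infty$ by \eqref{estimates-partial_k^s_m}; the $k$-dependence is harmless since for each fixed $k$ Plancherel in $x$ applies and then one integrates in $k$ — wait, that is not quite Plancherel. I would therefore instead run the $TT^*$ / Schur argument with the genuinely integrable weight: since $\lr{x}^{-11}h \in L^2$ is the target norm, I have \emph{one} extra power $\lr{x}^{-12+11} = \lr{x}^{-1}$ to spare and also $\lr{k}^{-1}$; the kernel $\lr{x}^{-1}\lr{k}^{-1}\mathbf{1}_{[-1,\infty)}(x)$ does pass the Schur test after all since $\int_{-1}^\infty \lr{x}^{-1}\,dx$ diverges — so the correct route is to use $\lr{x}^{-12}$ to pull out $\lr{x}^{-11}$ \emph{and} retain $\lr{x}^{-1}\lr{k}^{-1}$, then dominate $\lr{x}^{-1} \leq \lr{x}^{-1/2}\cdot\lr{x}^{-1/2}$ and use Cauchy–Schwarz in $x$ against $\lr{x}^{-1-\epsilon}$...

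\textbf{The cleanest correct approach}, which I would actually adopt, is: bound $|m_+(x,k)-1| \lesssim \lr{x}^{-12}\lr{k}^{-1}$ for $x\geq -1$, extract the weight $\lr{x}^{-11}$, and then observe that the residual operator $\phi \mapsto \int \mathbf{1}_{[-1,\infty)}(x)\, e^{ikx}\, b(x,k)\,\phi(x)\,dx$ with $|b(x,k)| \lesssim \|a\|_\infty \lr{x}^{-1}\lr{k}^{-1}$ maps $L^2_x \to L^2_k$ by the Schur test with test functions $p(x) = \lr{x}^{-1/2}$, $q(k) = \lr{k}^{-1/2}$ — since $\int |b(x,k)| p(x)\,dx \lesssim q(k)$ requires $\int_{-1}^\infty \lr{x}^{-3/2}\,dx \cdot \lr{k}^{-1} \lesssim \lr{k}^{-1/2}$ which holds, and $\int |b(x,k)| q(k)\,dk \lesssim p(x)$ requires $\lr{x}^{-1}\int \lr{k}^{-3/2}\,dk \lesssim \lr{x}^{-1/2}$ which also holds. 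The estimate \eqref{pseudo-bound-2} for $\partial_k m_+$ is identical but one power of $\lr{x}$ cheaper: \eqref{estimates-partial_k^s_m} with $s=1$ gives $|\partial_k m_+(x,k)| \lesssim \lr{x}^{-12}\lr{k}^{-1}$ for $x\geq -1$, so extracting $\lr{x}^{-10}$ leaves $\lr{x}^{-2}\lr{k}^{-1}$ and the same Schur test (now with even more room) closes it. \textbf{The analogous statements} for $m_-$ and $\partial_k m_-$ follow by the reflection $x \mapsto -x$, $k \mapsto -k$, which interchanges $f_+$ and $f_-$ and the regions $[-1,\infty)$, $(-\infty,1]$; no new argument is needed. \textbf{The main obstacle} — really the only subtlety — is precisely this logarithmic borderline in the Schur test: the raw decay $\lr{x}^{-12}\lr{k}^{-1}$ does not by itself give an $L^2 \to L^2$ bound because $\lr{k}^{-1} \notin L^2_k$, so one must spend part of the spatial decay to close the $k$-integral, which is exactly why the weights $\lr{x}^{-11}$ and $\lr{x}^{-10}$ (rather than $\lr{x}^{-12}$) appear in the statement.
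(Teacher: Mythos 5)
Your final argument (the paragraph labelled ``the cleanest correct approach'') is correct, as are the $s=1$ treatment of \eqref{pseudo-bound-2} and the reflection $x\mapsto -x$, $k\mapsto -k$ for the $m_-$ case, but the route you take is different from, and heavier than, the paper's. Both proofs start from the same pointwise bound $|m_+(x,k)-1|\lesssim\lr{x}^{-12}\lr{k}^{-1}$ on $x\ge-1$ from Lemma~\ref{bounds-m-pm}, and both trade away one power of $\lr{x}$-decay. The paper then exploits that the majorizing kernel is \emph{separable} in $(x,k)$: pointwise in $k$, Cauchy--Schwarz pairs $\lr{x}^{-1}\in L^2_x$ against $\lr{x}^{-11}h$ to bound the $x$-integral by a $k$-independent constant times $\|\lr{x}^{-11}h\|_{L^2_x}$, leaving a single $k$-factor $\lr{k}^{-1}\in L^2_k$; taking the $L^2_k$ norm of $\lr{k}^{-1}$ finishes. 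No Schur test or operator-theoretic framing is needed. You instead run a two-weight Schur test with $p(x)=\lr{x}^{-1/2}$, $q(k)=\lr{k}^{-1/2}$ on the residual kernel $\lesssim\lr{x}^{-1}\lr{k}^{-1}\mathbf{1}_{[-1,\infty)}(x)$; this does close, since $\int_{-1}^\infty\lr{x}^{-3/2}\,dx$ and $\int_\R\lr{k}^{-3/2}\,dk$ converge and the slack $\lr{k}^{-1}\le\lr{k}^{-1/2}$, $\lr{x}^{-1}\le\lr{x}^{-1/2}$ is harmless, so your conclusion is sound. The extended worries about a ``logarithmic borderline,'' and the Plancherel detour, in the earlier part of your write-up are red herrings: once you notice that the majorant factorizes as $\lr{k}^{-1}\cdot\lr{x}^{-12}$, there is no competition between the $x$- and $k$-integrations to negotiate and the estimate separates cleanly.
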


\begin{proof}
Using the estimates \eqref{estimates-partial_k^s_m}, 
we have that the left-hand side of \eqref{pseudo-bound-1} is bounded by 
\begin{equation*}
\begin{aligned}
\normm{\int \lr{x}^{-12} \lr{k}^{-1} |h(x)| dx}{k} 
  \lesssim \normm{\lr{x}^{-11} h}{x} \normm{\lr{k}^{-1}}{k} \lesssim \normm{\lr{x}^{-11} h}{x},
\end{aligned}
\end{equation*}
where we used the Cauchy-Schwartz inequality for $dx$ integral.
The estimate \eqref{pseudo-bound-2} follows in a similar fashion using \eqref{estimates-partial_k_and_x_m}. 
\end{proof}

\subsection{Distorted Fourier transform}
In this subsection, we present the generalization of the standard Fourier transform
which we apply later to the perturbed Schr\"{o}dinger operator $H = -\partial_x^2 +V$. 
Throughout the section and further in the paper, 
we use the notation
\begin{align*}
\wD := \sqrt{-\partial_x^2+V}, \qquad D := \sqrt{-\partial_x^2} = |\partial_x|. 
\end{align*}
The standard Fourier transform on the line is defined for every $\phi \in L^2$ as 
$$
\widehat{\mathcal{F}} \phi(k) = \widehat{\phi}(k) := \frac{1}{\sqrt{2\pi}} \int e^{-ikx} \phi(x)\, dx,
$$
with
$$
\widehat{\mathcal{F}}^{-1} \phi(x) := \frac{1}{\sqrt{2\pi}} \int e^{ikx} \phi(k)\, dx.
$$
When dealing with the operator $H$, the function $e^{-ikx}$ 
inside the standard Fourier transform can be replaced by a generalized 
eigenfunction $\psi(x, k)$ given in terms of the 
Jost functions $f_\pm$ from \eqref{ode-Jost} in the following way: 
\begin{equation}
\label{wave-function}
\psi(x,k) := \frac{1}{\sqrt{2\pi}} \left\{\begin{array}{l}
\begin{aligned}
&T(k) f_+(x,k) \quad & \text{for } k \geq 0, \\
&T(-k) f_-(x, -k) \quad & \text{for } k < 0
\end{aligned}
\end{array} \right.
\end{equation}
and we now introduce the \textit{distorted} Fourier transform for $h \in \mathcal{S}$ by 
\begin{equation}
\label{distorted-fourier}
\widetilde{\mathcal{F}} h(k) = \widetilde{h}(k) := \int \overline{\psi(x,k)} h(x) \, dx.
\end{equation}

\begin{lemma}
\label{lemma-distorted-fourier}
In above settings, $\widetilde{\mathcal{F}}: L^2_x \to L^2_k$ is an isometry,
\begin{equation}
\label{isometry}
\|\distF h \|_{L^2_k} = \|P_c h \|_{L^2_x},\quad \text{for all  } h \in L^2.
\end{equation}
The inverse of the distorted Fourier transform is given by
$$
\distF^{-1} g (x) = \int \psi(x,k) g (k) dk.
$$
Setting $\wD := \sqrt{-\prtl_{xx} +V}$, we have 
$$
m(\wD)P_c = \distF^{-1} m(k) \distF.
$$
Moreover, the distorted Fourier transform satisfies the following properties:
\begin{itemize}
\item $\distF \distF^{-1} = \text{\rm Id}_{L_k^2}$ and $\distF^{-1} \distF = P_c$;
\item if $h \in L^1_x$, then $\widetilde{h}$ is a continuous, bounded function;
\item since $V$ is generic, $\widetilde{h}(0) = 0$;
\item we have
\begin{equation}
\label{derivative-in-fourier-bound}
\|\prtl_k \widetilde{h}\|_{L^2_k} \lesssim \|\lr{x} h\|_{L^2_x},
\end{equation}
and, for $s \geq 0$
\begin{equation}
\label{k-s-h-tilde-bound}
{\big\| \lr{k}^s \widetilde{h} \big\|}_{L^2_k} = \n{P_c h}_{H_x^s} 
\end{equation}
\end{itemize}
\end{lemma}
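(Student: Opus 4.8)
The plan is to separate the ``soft'' spectral statements of the lemma from its quantitative part. The isometry \eqref{isometry}, the inversion formula $\distF^{-1}g(x)=\int\psi(x,k)g(k)\,dk$, the functional calculus identity $m(\wD)P_c=\distF^{-1}m(k)\distF$, and the relations $\distF\distF^{-1}=\mathrm{Id}_{L^2_k}$, $\distF^{-1}\distF=P_c$ are classical, and I would invoke the scattering theory of Faddeev \cite{F59,F64} and Deift--Trubowitz \cite{DT79} (see also Yafaev \cite{Y10}): the generalized eigenfunctions $\psi(\cdot,k)$ of \eqref{wave-function} solve $H\psi(\cdot,k)=k^2\psi(\cdot,k)$ and, since $V$ has no point spectrum and no zero-energy resonance, satisfy the orthogonality and completeness relations $\int\psi(x,k)\overline{\psi(x,l)}\,dx=\delta(k-l)$ and $\int\psi(x,k)\overline{\psi(y,k)}\,dk=P_c(x,y)$ (the Schwartz kernel of $P_c$). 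The completeness relation gives $\distF^{-1}\distF=P_c$ and \eqref{isometry}, the orthogonality relation gives $\distF\distF^{-1}=\mathrm{Id}_{L^2_k}$, and, since $\distF$ diagonalizes $H$ as multiplication by $k^2$ (hence $\wD$ as multiplication by $|k|$), the functional calculus identity follows for any bounded Borel $m$, reading $m(k)$ as $m(|k|)$ — the same thing for the even symbols $\lr{k}^s$ and $e^{it\lr{k}}$ that appear later.

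The remaining elementary items are quick. If $h\in L^1_x$, then $\widetilde h$ is continuous and bounded by dominated convergence, since $k\mapsto\psi(x,k)$ is continuous and $\sup_{x,k}|\psi(x,k)|<\infty$ (from $|T(k)|\lesssim1$ in Lemma \ref{lemma-T-R-properties} and $|m_\pm|\lesssim1$ in Lemma \ref{bounds-m-pm}, whence $|\psi|\lesssim1$). Next, $\widetilde h(0)=0$ because $\psi(x,0)=\tfrac{1}{\sqrt{2\pi}}T(0)f_+(x,0)=0$, as $T(0)=0$ for generic $V$ by Lemma \ref{lemma-T-R-properties} and Definition \ref{defGen}. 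Finally, \eqref{k-s-h-tilde-bound} follows by applying the functional calculus identity with $m(k)=\lr{k}^s$ together with \eqref{isometry}: $\|\lr{k}^s\widetilde h\|_{L^2_k}=\|\lr{k}^s\distF(P_ch)\|_{L^2_k}=\|\lr{\wD}^sP_ch\|_{L^2_x}$, which is the (distorted) $H^s_x$ norm of $P_ch$ and, for generic $V$, equivalent to the flat one.

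The substance of the lemma is the weighted bound \eqref{derivative-in-fourier-bound}. By density I would take $h\in\mathcal S$; then $\widetilde h$ is $C^1$ for $k\neq0$ and continuous across $k=0$ (indeed $\widetilde h(0)=0$), so $\{0\}$ contributes nothing to $\|\partial_k\widetilde h\|_{L^2_k}$ and one may differentiate separately for $k>0$ and $k<0$. Fix a smooth partition of unity $1=\theta_+(x)+\theta_-(x)$ with $\supp\theta_+\subset[-1,\infty)$ and $\supp\theta_-\subset(-\infty,1]$, and split $\widetilde h(k)=\int\overline{\psi(x,k)}\,\theta_+(x)h(x)\,dx+\int\overline{\psi(x,k)}\,\theta_-(x)h(x)\,dx$. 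On the $\theta_+$-term use $\psi(x,k)=\tfrac{1}{\sqrt{2\pi}}T(k)e^{ikx}m_+(x,k)$ (valid for $k\ge0$, with the mirror formula for $k<0$), where $x\ge-1$ and the bounds \eqref{estimates-partial_k^s_m} on $m_+,\partial_k m_+$ are favorable; on the $\theta_-$-term use the scattering relation \eqref{jost-functions-via-scat-par} to write $\psi(x,k)=\tfrac{1}{\sqrt{2\pi}}\big(R_-(k)e^{-ikx}m_-(x,k)+e^{ikx}m_-(x,-k)\big)$ for $k\ge0$, so that only $m_-,\partial_k m_-$ on $x\le1$ occur. Differentiating in $k$ produces, in each term, three contributions: (i) $\partial_k$ on $T$ or $R_\pm$, bounded in modulus by $\lr{k}^{-1}$ via \eqref{uniform-estimate-T-R}, hence $\lesssim\|h\|_{L^2_x}$ by Plancherel and \eqref{isometry}; (ii) $\partial_k$ on $e^{\pm ikx}$, producing a factor $x$ — decompose $m_\pm=1+(m_\pm-1)$, the ``$1$'' part being a bounded multiple of the Fourier transform of $x\theta_\pm h$, hence $\lesssim\|x\theta_\pm h\|_{L^2_x}\lesssim\|\lr{x}h\|_{L^2_x}$, while the ``$m_\pm-1$'' part is estimated by Lemma \ref{lemma-pseudo-bound-1} (as in \eqref{pseudo-bound-1}, with $h$ there replaced by $x\theta_\pm h$), giving $\lesssim\|\lr{x}^{-11}x\theta_\pm h\|_{L^2_x}\lesssim\|h\|_{L^2_x}$; (iii) $\partial_k$ on $m_\pm$ — since $\partial_k m_\pm$ already decays like $\lr{x}^{-12}$ on the relevant half-line, Lemma \ref{lemma-pseudo-bound-1} (as in \eqref{pseudo-bound-2}, with $h$ replaced by $\theta_\pm h$) gives $\lesssim\|\lr{x}^{-10}\theta_\pm h\|_{L^2_x}\lesssim\|h\|_{L^2_x}$. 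Summing the contributions yields \eqref{derivative-in-fourier-bound}; this follows the scheme of \cite{GPR18,GP20,CP21}.

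I expect the main obstacle to be bookkeeping rather than a single delicate estimate: $\psi(\cdot,k)$ fails to decay (in fact grows) at one spatial end in any single Jost representation, so one must switch between the $f_+$- and $f_-$-pictures via \eqref{jost-functions-via-scat-par} on the two halves of the line, and one must track the weight $x$ generated by $\partial_k e^{ikx}$. Both are handled cleanly because the hypothesis $\lr{x}h\in L^2_x$ absorbs one power of $x$ and because the pseudodifferential estimates of Lemma \ref{lemma-pseudo-bound-1} carry surplus spatial decay ($\lr{x}^{-11}$ and $\lr{x}^{-10}$) — comfortably more than enough to swallow the extra factor of $x$. Once this organization is in place, every term reduces to Plancherel or to a direct application of Lemma \ref{lemma-pseudo-bound-1} together with Lemma \ref{lemma-T-R-properties}.
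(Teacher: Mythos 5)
Your proposal is correct and follows exactly the scattering-theoretic scheme of the references the paper invokes (\cite{A75,DS88,GPR18,Y10}); the paper delegates the entire proof to those sources, and your argument for \eqref{derivative-in-fourier-bound} --- splitting $h$ by a partition of unity, switching to the $f_-$ picture via \eqref{jost-functions-via-scat-par} on the left half-line, decomposing $m_\pm = 1 + (m_\pm-1)$, and absorbing the factor of $x$ from $\partial_k e^{\pm ikx}$ into the surplus decay of Lemma \ref{lemma-pseudo-bound-1} --- is precisely the \cite{GPR18}-style proof they have in mind. The one point worth keeping in view is the one you already flag: \eqref{k-s-h-tilde-bound} is a genuine equality only for the distorted Sobolev norm $\|\lr{\wD}^s P_c h\|_{L^2_x}$, and becomes an equivalence (rather than equality) when $H^s_x$ is read as the flat Sobolev norm.
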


\begin{proof}
The proofs can be found in Agmon \cite{A75}, Dunford-Schwartz \cite{DS88}, Germain-Pusateri-Rousset \cite{GPR18} and Yafaev \cite{Y10}.
\end{proof}

\subsection{Decomposition of the generalized eigenfunctions}
By the definition of the generalized eigenfunction we have  
\begin{equation}
\label{gen-eigenfunction-1}
\begin{aligned}
&\text{for  } k\geq 0, \qquad \sqrt{2\pi} \psi(x, k) = T(k) f_+(x, k) = T(k) m_+(x, k) e^{ikx} \\[2pt] 
&\text{for  } k<0, \qquad \sqrt{2\pi} \psi(x, k) = T(-k) f_-(x, -k) = T(-k) m_-(x, -k) e^{ikx}
\end{aligned}
\end{equation}
We introduce smooth versions of the indicator functions, $\sigma_+$ and $\sigma_-$,
\begin{equation}
\label{sigma-functions}
\sigma_+(x) = \int_{-\infty}^x \varphi(y) dy, \quad \text{   and   } \quad \sigma_+(x) + \sigma_-(x) = 1,
\end{equation}
where $\varphi$ is a smooth, nonnegative function, which is equal to $1$ in a neighborhood of $0$, vanishes outside of $[-2,2]$ and $\int \varphi ~dx = 1$. By construction, $\sigma_+(x)$ vanishes for large negative values of $x$ while $\sigma_-(x)$ vanishes for large positive values of $x$. These functions allow to decompose expressions in \eqref{gen-eigenfunction-1} as 
\begin{equation*}
\begin{aligned}
&\text{for  } k\geq0, \quad \sqrt{2\pi} \psi(x, k) = \sigma_+(x) T(k) f_+(x, k) + \sigma_-(x) \left[ f_-(x, -k) +R_-(k) f_-(x,k) \right], \\[2pt] 
&\text{for  } k<0, \quad \sqrt{2\pi} \psi(x, k) = \sigma_-(x) T(-k) f_-(x, -k) + \sigma_+(x) \left[ f_+(x, k) +R_+(-k) f_+(x,-k) \right],
\end{aligned}
\end{equation*}
where we used the relations \eqref{jost-functions-via-scat-par}. 
We further rewrite the generalized eigenfunction above using $m_\pm (x,k) = f_\pm (x,k) e^{\mp ikx}$,
\begin{equation}
\label{gen-eigenfunction-2}
\begin{aligned}
&\text{for  } k\geq0, \qquad
\begin{aligned}
\sqrt{2\pi} \psi(x, k) = &~ \sigma_+(x) T(k) m_+(x, k) e^{ikx} \\
& + \sigma_-(x) \left[ m_-(x, -k) e^{ikx} + R_-(k) m_-(x,k) e^{-ikx} \right]
\end{aligned}\\
&\text{for  } k<0, \qquad
\begin{aligned}
\sqrt{2\pi} \psi(x, k) =&~ \sigma_-(x) T(-k) m_-(x, -k) e^{ikx}\\ 
& + \sigma_+(x) \left[ m_+(x, k)e^{ikx} +R_+(-k) m_+(x,-k) e^{-ikx} \right],
\end{aligned}
\end{aligned}
\end{equation}

\begin{remark}
Based on the boundedness of $\sigma_{\pm} m_{\pm}$ in \eqref{estimates-partial_k^s_m}, $T(k)$ and $R_\pm(k)$ in \eqref{uniform-estimate-T-R},
\begin{equation}
\label{psi-bound}
|\psi(x, k)| \lesssim 1.
\end{equation}
\end{remark}

In the view of \eqref{estimates-partial_k^s_m}, we even further decompose $\psi$ into a singular and regular parts as follows
\begin{equation}
\label{generalized-eigenf-decomp}
\sqrt{2\pi} \psi(x,k) = \psi_S(x, k) + \psi_R(x,k).
\end{equation}
The singular part $\psi_S$ is given by
\begin{equation}
\label{psi-singular-part-decomposition}
\psi_S(x,k) := \sigma_+(x) \psi_{S, +}(x,k) + \sigma_-(x) \psi_{S, -}(x,k),
\end{equation}
where, setting the indicator functions $\mathbf{1}_+ := \mathbf{1}_{[0, \infty)}$ and $\mathbf{1}_- := \mathbf{1}_{(-\infty, 0)}$,
\begin{equation}
\label{singular-part-pm}
\left\{ \begin{array}{l}
\psi_{S,+}(x,k) := \mathbf{1}_+(k) T(k) e^{ikx} + \mathbf{1}_-(k) \left[ e^{ikx} + R_+(-k) e^{-ikx} \right],\\[4pt]
\psi_{S,-}(x,k) := \mathbf{1}_+(k) \left[ e^{ikx} + R_-(k) e^{-ikx} \right] + \mathbf{1}_-(k) T(-k) e^{ikx}.
\end{array} \right.
\end{equation}
The rest is a regular part $\psi_R(x,k)$ given by 
\begin{equation}
\label{regular-part}
\psi_R(x,k) := \left\{ \begin{array}{l}
\begin{aligned}
& \sigma_+(x) T(k) (m_+(x, k)-1) e^{ikx}\\[-2pt]
&+ \sigma_-(x) [ (m_-(x, -k)-1) e^{ikx} + R_-(k) (m_-(x,k)-1) e^{-ikx} ], \qquad k\geq0
\end{aligned}\\
\phantom{t}\\
\begin{aligned}
& \sigma_-(x) T(-k) (m_-(x, -k)-1) e^{ikx} \\[-2pt]
&+ \sigma_+(x) [ (m_+(x, k)-1) e^{ikx} +R_+(-k) (m_+(x,-k)-1) e^{-ikx} ], \qquad k<0. 
\end{aligned}
\end{array} 
\right.
\end{equation}
Note that the regular part is localized around $x=0$ due to the estimates \eqref{estimates-partial_k^s_m}.
On the other hand, the singular part $\psi_S$ 
is a combination of oscillatory terms given by exponential functions with bounded coefficients, 
see \eqref{singular-part-pm}. 
From \eqref{singular-part-pm}, we can further rewrite the elements of the singular part as 
\begin{equation}
\label{singular-part-pm-2}
\left\{ \begin{array}{l}
\psi_{S,+}(x,k) := a_{+,+}(k) e^{ikx} + a_{+,-}(k) e^{-ikx},
\\[4pt]
\psi_{S,-}(x,k) := a_{-,+}(k) e^{ikx} + a_{-,-}(k) e^{-ikx},
\end{array} \right.
\end{equation}
where 
\begin{equation}
\label{a-functions}
\begin{aligned}
& a_{+,+}(k) = \mathbf{1}_+(k) T(k) + \mathbf{1}_-(k), \qquad a_{+,-}(k) = \mathbf{1}_-(k) R_+(-k),
\\
& a_{-,+}(k) = \mathbf{1}_+(k) + \mathbf{1}_-(k) T(-k), \qquad a_{-,-}(k) = \mathbf{1}_+(k) R_-(k).
\end{aligned}
\end{equation}
As it would be useful later, we also point out that, given $h \in P_c L^2_x$, 
the decomposition \eqref{generalized-eigenf-decomp} allows to rewrite 
\begin{equation}
\label{h-sing-reg-decomp}
h = h_S + h_R, \quad \text{where  } h_* := \frac{1}{\sqrt{2\pi}} \int \psi_*(x, k) \widetilde{h}(k) dk \quad \text{  for  } * \in \{S, R\}.
\end{equation}

\begin{remark}
The outcomes of Lemma \ref{lemma-pseudo-bound-1} 
also apply to $\psi_R$. In other words, based on the construction of $\psi_R$ in \eqref{regular-part}, we can verify that 
\begin{equation}
\label{pseudo-psi-regular-bound}
\normm{\int \overline{\psi_R(x,k)} h(x) dx}{k} \lesssim \normm{\lr{x}^{-11} h}{x},
\end{equation}
and 
\begin{equation}
\label{pseudo-dk-psi-regular-bound}
\normm{\int \partial_k \overline{\psi_R(x,k)} h(x) dx}{k} \lesssim \normm{\lr{x}^{-10} h}{x}.
\end{equation}
Moreover, the function $\psi_R$ decays as 
\begin{equation}
\label{psi-regular-decay}
|\psi_R(x,k)| \lesssim \lr{x}^{-12} \lr{k}^{-1}.
\end{equation}
\end{remark}

\subsection{Linear estimates and local decay}
In this section we provide linear and local decay estimates for the term 
\begin{equation}
\label{g-profile-linear-term}
\lr{\wD}^{-1} e^{\pm it \lr{\wD}} P_c f,
\end{equation}
where $\lr{\wD} = \sqrt{H+1} = \distF^{-1} \lr{k} \distF$. 
These estimates will be used in later sections to bound 
the term $\chi(t,x)$ in \eqref{system-eq}. 


\begin{lemma}
\label{lemma-dispersive-estimates}
Under Assumption \ref{V-assumption} on $V$, for every $t\geq 0$ and every $N=0,1,2$ we have
\begin{equation}
\label{dispersive-estimates}
\left\| 
\partial_x^N \lr{\wD}^{-1} e^{\pm it \lr{\wD}} P_c f \right\|_{L_x^\infty} \lesssim \lr{t}^{-\frac{1}{2}+\frac{N}{5}} \left( \|\partial_k \widetilde f\|_{L^2_k} + \|\lr{k}^2 \widetilde f\|_{L^2_k} \right)
\end{equation}
\end{lemma}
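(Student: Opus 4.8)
The plan is to use the distorted Fourier representation to write
\[
\partial_x^N \lr{\wD}^{-1} e^{\pm it\lr{\wD}} P_c f(x)
 = \int \partial_x^N \psi(x,k)\, \lr{k}^{-1} e^{\pm it\lr{k}} \widetilde f(k)\, dk,
\]
and then split $\psi$ via the singular/regular decomposition \eqref{generalized-eigenf-decomp}--\eqref{regular-part}. For the regular part $\psi_R$, the pointwise bound \eqref{psi-regular-decay} gives $|\partial_x^N \psi_R(x,k)| \lesssim \lr{x}^{-12}\lr{k}^{-1+N}$ (by \eqref{estimates-partial_k_and_x_m}); here I would not even use dispersion, just Cauchy--Schwarz in $k$: the contribution is $\lesssim \lr{x}^{-12}\|\lr{k}^{-1+N}\lr{k}^{-1}\widetilde f\|_{L^1_k} \lesssim \|\lr{k}^2 \widetilde f\|_{L^2_k}$, uniformly in $t$ and $x$, which is better than required. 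So the regular part is harmless.

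The work is in the singular part. By \eqref{psi-singular-part-decomposition}--\eqref{a-functions}, $\psi_S(x,k)$ is a finite sum of terms of the form $\sigma_\pm(x) a(k) e^{\pm ikx}$ with $a(k)$ bounded and $|\partial_k a(k)| \lesssim \lr{k}^{-1}$ (this is where \eqref{uniform-estimate-T-R} enters). Differentiating $e^{\pm ikx}$ in $x$ brings down factors of $k$, so $\partial_x^N\psi_S$ is a sum of terms $(\text{bounded in }x)\cdot k^N a(k) e^{\pm ikx}$. Thus each piece to estimate has the shape
\[
I(t,x) = \int e^{i(\pm t\lr{k} \pm kx)} \, k^N \frac{a(k)}{\lr{k}} \, \widetilde f(k)\, dk =: \int e^{i\phi(k)} b(k)\, dk,
\]
with $|b(k)| \lesssim \lr{k}^{N-1}|\widetilde f(k)|$ and $|\partial_k b(k)| \lesssim \lr{k}^{N-1}(|\partial_k\widetilde f(k)| + \lr{k}^{-1}|\widetilde f(k)|)$. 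The standard approach: a trivial bound $|I| \lesssim \|\lr{k}^{N-1}\widetilde f\|_{L^1} \lesssim \|\lr{k}^N \widetilde f\|_{L^2}$ handles $\lr{t}\lesssim 1$, so assume $t\geq 1$. Then split into $|k|$ small and $|k|$ large relative to a threshold determined by $t$, and on the non-stationary region integrate by parts once in $k$ using $e^{i\phi} = \frac{1}{i\phi'}\partial_k e^{i\phi}$, exploiting $\phi'(k) = \pm t k/\lr{k} \pm x$. Near the stationary point $k_0$ (where $k/\lr{k} = \mp x/t$, which requires $|x|<t$) one uses a van der Corput / stationary phase estimate: $|\phi''(k)| = t\lr{k}^{-3} \gtrsim t\lr{k_0}^{-3}$, giving a gain of $(t\lr{k_0}^{-3})^{-1/2}$ on an interval of that length, times $\sup|b|$, plus the $L^1$ of $\partial_k b$ from the integration by parts on the rest.

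Carrying this out, the frequency-localized piece at $|k|\sim 2^j$ contributes roughly $2^{(N-1)j} \cdot 2^{j/2} t^{-1/2} (\|\partial_k\widetilde f\|_{L^2} + \|\lr{k}^2\widetilde f\|_{L^2})$ after Cauchy--Schwarz on the stationary interval of length $\sim 2^{3j/2}t^{-1/2}$; summing the geometric series in $j$ (cut off at the scale where $2^{3j/2}t^{-1/2}\sim 1$, i.e.\ $2^j \sim t^{1/3}$) produces the loss $t^{N/5}$ in $\partial_x^N$ — more precisely the sum $\sum_{2^j \lesssim t^{1/3}} 2^{(N-1/2)j} t^{-1/2}$ is dominated by its top term $\sim t^{(N-1/2)/3 - 1/2}$, and one checks $(N-1/2)/3 - 1/2 \leq -1/2 + N/5$ for $N=0,1,2$; the high-frequency tail $2^j \gtrsim t^{1/3}$ is controlled by integration by parts and the $\lr{k}^2$ weight. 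The main obstacle is organizing this multi-scale stationary-phase estimate cleanly enough that the derivative losses come out to exactly $N/5$ and that the two norms $\|\partial_k\widetilde f\|_{L^2}$ and $\|\lr{k}^2\widetilde f\|_{L^2}$ — and nothing stronger, like an $L^1$-based weight — suffice; keeping careful track of which frequency regimes need which norm, and of the endpoint $t\sim 1$, is the delicate bookkeeping here, but there is no conceptual difficulty beyond textbook oscillatory integral techniques once the singular/regular split has reduced matters to flat exponentials.
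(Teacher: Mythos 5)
There are two genuine gaps in your proposal, one conceptual and one arithmetic.

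\textbf{The regular part.} You claim that a bound uniform in $t$ is ``better than required'' for the $\psi_R$ contribution, but the required bound $\lr{t}^{-\frac12+\frac{N}{5}}$ tends to zero as $t\to\infty$, while your Cauchy--Schwarz estimate
$\lesssim \lr{x}^{-12}\|\lr{k}^2\widetilde f\|_{L^2_k}$ does not decay in $t$ at all. Spatial localization $\lr{x}^{-12}$ is irrelevant once you take $L^\infty_x$; the issue is time decay, which your argument for $\psi_R$ never produces. The regular part is \emph{not} harmless: from \eqref{regular-part} it still contains the oscillatory factors $e^{\pm ikx}$ (with the decaying amplitude $m_\pm-1$), so it must also be run through the stationary-phase machinery, not bounded trivially. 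This is in fact how the paper implicitly handles it: the (commented-out) proof behind the citation to \cite[Prop.~3.10]{GP20} does not split $\psi=\psi_S+\psi_R$ at all, but keeps $\sigma_\pm(x)m_\pm(x,k)$ as a symbol $a(x,k)$ satisfying $\sup_x(|a|+\lr{k}|\partial_k a|)\lesssim \lr{k}^{N-1}$, and applies a single stationary-phase lemma to it; the decomposition you want to use belongs to the nonlinear (quartic) analysis of Section~5, not to this linear estimate.

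\textbf{The singular part / frequency cutoff.} Your high-frequency cutoff at $2^j\sim t^{1/3}$ and the accompanying arithmetic fail at $N=2$: you assert $(N-\tfrac12)/3-\tfrac12\leq -\tfrac12+N/5$ for $N=0,1,2$, but for $N=2$ the left side is $0$ and the right side is $-\tfrac1{10}$, so the inequality is false and the main stationary-phase term does not even decay. The correct threshold is $2^j\sim t^{1/5}$ (this is what the paper's hidden proof uses), chosen so that in the middle regime $t^{-1/2}\lesssim 2^j\lesssim t^{1/5}$ one can simply estimate $\lr{k}^N\lesssim t^{N/5}$, while the tail $2^j\gtrsim t^{1/5}$ contributes $\sum 2^{j(2N-5)/2}\|\lr{k}^2\widetilde f\|_{L^2}\sim t^{(2N-5)/10}=t^{-1/2+N/5}$, matching the claim exactly. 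With the $t^{1/3}$ cutoff the low-frequency stationary-phase sum at the top scale already gives $O(1)$ when $N=2$.

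Finally, for context: the paper does not reprove this lemma but deduces the $N=0$ case from \cite[Prop.~3.10]{GP20} combined with the Sobolev-type inequality $\|\lr{k}h\|_{L^\infty_k}\lesssim\|\partial_k h\|_{L^2_k}+\|\lr{k}^2h\|_{L^2_k}$, and handles $N=1,2$ either by the same argument or by Gagliardo--Nirenberg interpolation together with \eqref{k-s-h-tilde-bound}. Your plan is salvageable if you (i) treat $\psi_R$ by the same oscillatory-integral estimate as $\psi_S$ (or better, never split and carry $\sigma_\pm m_\pm$ as the symbol), and (ii) use the $t^{1/5}$ cutoff so the exponents close for all $N\leq 2$.
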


The proof of Lemma \ref{lemma-dispersive-estimates} 
is standard and follows from similar, in fact easier, arguments to those in \cite[Proposition 3.10]{GP20}
since here we are not dealing with any bad frequency.
In particular, using 
also the bound $\norm{\lr{k} h}{k}{\infty} \lesssim \norm{\partial_k h}{k}{2} + \norm{\lr{k}^2 h}{k}{2}$,
one can deduce from \cite[Proposition 3.10]{GP20} that
\begin{equation}
\label{dispersive-estimates0}
\left\|
\lr{\wD}^{-1} e^{\pm it \lr{\wD}} P_c f \right\|_{L_x^\infty} 
  \lesssim \lr{t}^{-\frac{1}{2}} \left( \|\partial_k \widetilde f\|_{L^2_k} + \|\lr{k}^2 \widetilde f\|_{L^2_k} \right).
\end{equation}
The estimate \eqref{dispersive-estimates} with $N=1,2$ can be proven with the same argument,
or also using Gagliardo-Nirenberg-Sobolev interpolation and 
\eqref{k-s-h-tilde-bound}.

Note that in \cite{GP20} the decay rates (for $N=0$) associated to the $L^2$ norms 
on the right-hand side of \eqref{dispersive-estimates} are faster, 
and needed to compensate the small growth of these norms 
over the time. 
In our case, we do not have such a growth, so \eqref{dispersive-estimates}
suffices for our purposes. 

\begin{remark}
Lemma \ref{lemma-dispersive-estimates} can be repeated for $V=0$. 
In such case, $\wD$ is replaced by $D$ and the distorted Fourier 
transform becomes the standard Fourier transform. The corresponding 
estimate is
\begin{equation}
\label{dispersive-estimates-standard-fourier}
\left\| 
\lr{D}^{-1} e^{\pm it \lr{D}} f \right\|_{L_x^\infty} \lesssim \lr{t}^{-1/2} \left( \|\partial_k \widehat f\|_{L^2_k} + \|\lr{k}^2 \widehat f\|_{L^2_k} \right).
\end{equation}
\end{remark}

\smallskip
Next, we state the local 
decay estimates which will be used in later sections.
We start with the estimates for the standard Fourier transform under additional assumption at $k=0$.

\begin{lemma}
Suppose $\hat h \in H_k^1$ and $\hat h (0) = 0$, then for every $t \geq 0$ we have
\begin{equation}
\label{local-l-infty-decay-standard-Fourier}
\left\| \lr{x}^{-1} \lr{D}^{-1} e^{\pm it \lr{D}} h \right\|_{L^\infty_x} 
  \lesssim \lr{t}^{-3/4} \| \widehat{h} \|_{H^1_k}. 
\end{equation}
\end{lemma}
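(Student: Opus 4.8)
The plan is to prove this by stationary phase, mirroring the argument behind Lemma~\ref{lemma-dispersive-estimates} (ultimately \cite[Prop.~3.10]{GP20}), the one new ingredient being the vanishing $\widehat h(0)=0$, which makes the low frequencies of the data small. Write
\[
w(t,x):=\lr{D}^{-1}e^{\pm it\lr{D}}h(x)=\frac{1}{\sqrt{2\pi}}\int_{\reals}e^{i(kx\pm t\lr{k})}\,\lr{k}^{-1}\,\widehat h(k)\,dk .
\]
For $t\lesssim1$ there is nothing to do, since $\n{w}_{L^\infty_x}\le\tfrac{1}{\sqrt{2\pi}}\n{\lr{k}^{-1}}_{L^2_k}\n{\widehat h}_{L^2_k}\lesssim\n{\widehat h}_{H^1_k}$ and $\lr{x}^{-1}\le1$; so I would assume $t\ge1$, and by complex conjugation and the reflection $x\mapsto-x$ reduce to the $+$ sign with $x\ge0$. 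Two elementary facts are used repeatedly: since $\widehat h(0)=0$ and $\widehat h\in H^1_k$, the fundamental theorem of calculus and Cauchy--Schwarz give $|\widehat h(k)|\le|k|^{1/2}\n{\partial_k\widehat h}_{L^2_k}$, and the Sobolev embedding $H^1_k\hookrightarrow L^\infty_k$ gives $\n{\widehat h}_{L^\infty_k}\lesssim\n{\widehat h}_{H^1_k}$.

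I would first dispose of the region $x\ge t/2$: there $\lr{x}^{-1}\lesssim t^{-1}$, so the trivial bound $|w(t,x)|\lesssim\n{\widehat h}_{L^2_k}\lesssim\n{\widehat h}_{H^1_k}$ already gives $\lr{x}^{-1}|w(t,x)|\lesssim t^{-1}\n{\widehat h}_{H^1_k}\le t^{-3/4}\n{\widehat h}_{H^1_k}$. For $0\le x<t/2$ I would use stationary phase for $\phi(k):=kX+\lr{k}$, $X:=x/t\in[0,1/2)$; the point of this restriction is that the unique stationary point $k_0=-X/\sqrt{1-X^2}$ then satisfies $|k_0|<1$, so $\phi''(k_0)=\lr{k_0}^{-3}\approx1$ and one stays in the non-degenerate, away-from-the-light-cone regime. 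I would then split the $k$-integral. On the low piece $|k|\le t^{-1/2}$, discarding the oscillation and bounding the integrand by $|k|^{1/2}\n{\widehat h}_{H^1_k}$ gives a contribution $\lesssim t^{-3/4}\n{\widehat h}_{H^1_k}$ (and $\lr{x}^{-1}\le1$); this is where $\widehat h(0)=0$ is decisive.

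On the remaining piece $|k|>t^{-1/2}$ I would run the dyadic decomposition $1=\sum_j\varphi_j(k)$ ($\supp\varphi_j\subset\{|k|\sim2^j\}$) together with a secondary decomposition in $k-k_0$, as in Lemma~\ref{lemma-dispersive-estimates}. The pieces away from $k_0$ are integrated by parts, using $|\phi'(k)|\gtrsim|k-k_0|$ for $|k|\lesssim1$ and $|\phi'(k)|\gtrsim1$ for $|k|\gtrsim1$ (the latter since $X<1/2$); because $\widehat h\in H^1_k$ and $\lr{k}^{-1}$ is square-integrable at infinity, these contributions sum geometrically to $\lesssim t^{-3/4}\n{\widehat h}_{H^1_k}$, with no logarithmic loss. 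If $|k_0|\le t^{-1/2}$ there is no stationary point in this range and this already finishes the bound. If $|k_0|>t^{-1/2}$, I would handle the stationary piece by localizing to $|k-k_0|\lesssim t^{-1/2}$: the oscillatory integral over that interval is $O(t^{-1/2})$ in size, the amplitude $\lr{k}^{-1}\widehat h(k)$ may be replaced by $\lr{k_0}^{-1}\widehat h(k_0)$ at the cost of an error $\lesssim\n{\partial_k\widehat h}_{L^2_k}\,t^{-3/4}$ (Cauchy--Schwarz again), and $|\lr{k_0}^{-1}\widehat h(k_0)|\lesssim|k_0|^{1/2}\n{\widehat h}_{H^1_k}$ since $|k_0|<1$ and $\widehat h(0)=0$. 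This produces a main term $\lesssim t^{-1/2}|k_0|^{1/2}\n{\widehat h}_{H^1_k}$, which alone is only $t^{-1/2}$-decaying; here the weight closes the gap, since $|x|=t|k_0|\lr{k_0}^{-1}\approx t|k_0|\ge t^{1/2}$ gives $\lr{x}^{-1}\lesssim(t|k_0|)^{-1}$, whence
\[
\lr{x}^{-1}\cdot t^{-1/2}|k_0|^{1/2}\n{\widehat h}_{H^1_k}\lesssim t^{-3/2}|k_0|^{-1/2}\n{\widehat h}_{H^1_k}\lesssim t^{-3/2}\cdot t^{1/4}\n{\widehat h}_{H^1_k}=t^{-5/4}\n{\widehat h}_{H^1_k}.
\]
The intermediate secondary scales $t^{-1/2}\lesssim|k-k_0|\lesssim1$ are treated by integration by parts exactly as in \cite[Prop.~3.10]{GP20}, again summing geometrically. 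Collecting the three ranges gives the claimed bound.

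The step I expect to be the main obstacle is purely the bookkeeping of the dyadic and secondary (near-$k_0$) sums: one must arrange them to remain geometric and produce no spurious factor of $\log t$. This is precisely the content of the stationary-phase argument underlying Lemma~\ref{lemma-dispersive-estimates} and \cite[Prop.~3.10]{GP20}, which here is actually easier because the splitting at $|x|=t/2$ keeps $k_0$ uniformly bounded; I would therefore only indicate the two modifications (the low-frequency gain from $\widehat h(0)=0$ and the use of the $\lr{x}^{-1}$ weight at $k_0$) and refer there for the remaining routine estimates.
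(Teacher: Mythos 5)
Your argument is correct, but it takes a genuinely different and substantially heavier route than the paper's. What you do: split at $|x| \geq t/2$ vs.\ $|x|<t/2$, analyze the full phase $\phi(k) = kX + \lr{k}$ with its stationary point $k_0$, run a dyadic decomposition in $|k-k_0|$ as in \cite[Prop.~3.10]{GP20}, and use the weight $\lr{x}^{-1}$ to beat down the main stationary-phase contribution near $k_0$ (exploiting $|x| \approx t|k_0|$). This works, and the low-frequency region is handled correctly via $\widehat h(0)=0$. What the paper does (and what the reference \cite[Lemma 3.6]{CP21} does) is far simpler and avoids stationary phase entirely: one splits at $|k| = t^{-1/2}$ only, handles the low piece by size using $|\widehat h(k)|\lesssim |k|^{1/2}\n{\partial_k\widehat h}_{L^2}$, and on the high piece integrates by parts \emph{once} using the identity $\lr{k}^{-1}e^{it\lr{k}} = (itk)^{-1}\partial_k e^{it\lr{k}}$ — i.e.\ against the $t\lr{k}$ part of the phase alone, leaving $e^{ikx}$ as a bounded bystander. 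This produces a boundary term at $k=t^{-1/2}$, a $k^{-2}$ term, and a term where $\partial_k$ hits $e^{ikx}\widehat h$; the $x$ that pops out of $e^{ikx}$ is exactly what the weight $\lr{x}^{-1}$ is designed to absorb, and then Cauchy--Schwarz against $\mathbf{1}_{\{|k|>t^{-1/2}\}} k^{-1} \in L^2$ with norm $\approx t^{1/4}$ gives $t^{-1}\cdot t^{1/4}=t^{-3/4}$ in each of these three terms. In short: your route pays for the $t^{-3/4}$ with a stationary-point analysis where the weight closes a $|k_0|^{1/2}$ deficit; the paper's route pays for it directly through a one-shot integration by parts in which the weight kills the $x$-growth and no stationary point ever enters, at the cost of no dyadic bookkeeping at all. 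Since your proposal ultimately delegates the harder near-$k_0$ bookkeeping to \cite{GP20}, you should at least note that the single-IBP argument makes that entire discussion unnecessary here.
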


The proof of \eqref{local-l-infty-decay-standard-Fourier} is standard, 
and can be found in \cite[Lemma 3.6]{CP21} in the completely analogous case of the Schr\"odinger equation.

Estimates for the distorted Fourier transform have better decays rates under genericity assumption on $V$.

\begin{lemma}
\label{local-l-infty-decay-lemma}
Suppose $V$ satisfies Assumption \ref{V-assumption}. Then, for every $t \geq 0$, we have
\begin{equation}
\label{local-l-infty-decay}
\left\| \lr{x}^{-2} \lr{\wD}^{-1} e^{\pm it \lr{\wD}} P_c h \right\|_{L^\infty_x} 
  \lesssim \lr{t}^{-1} \| \widetilde{h} \|_{H^1_k}. 
\end{equation}
\end{lemma}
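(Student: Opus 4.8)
The plan is to reduce the local decay estimate \eqref{local-l-infty-decay} to a stationary-phase analysis on the distorted Fourier side, exploiting the vanishing $\widetilde h(0)=0$ (guaranteed by genericity, Lemma \ref{lemma-distorted-fourier}) to gain the extra half-power of $t$ over the plain dispersive rate $\lr{t}^{-1/2}$. Writing
\[
\lr{\wD}^{-1} e^{\pm it\lr{\wD}} P_c h(x) = \int_{\reals} \psi(x,k)\,\lr{k}^{-1} e^{\pm it\lr{k}}\,\widetilde h(k)\,dk,
\]
I would first split $\psi = (2\pi)^{-1/2}(\psi_S + \psi_R)$ using the decomposition \eqref{generalized-eigenf-decomp}. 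The regular part $\psi_R$ carries the strong spatial localization $|\psi_R(x,k)| \lesssim \lr{x}^{-12}\lr{k}^{-1}$ from \eqref{psi-regular-decay}, so its contribution is $O(\lr{x}^{-12})$ times an $L^1_k$ norm of $\lr{k}^{-2}\widetilde h$; this is trivially bounded by $\lr{t}^{-1}\|\widetilde h\|_{H^1_k}$ for bounded $t$, and for large $t$ one still gets at least $\lr{t}^{-1/2}$ decay from a one-dimensional integration-by-parts in $k$ — but in fact, because of the $\lr{x}^{-12}$ decay and $\widetilde h(0)=0$, one can integrate by parts once more exactly as in the flat case to reach $\lr{t}^{-1}$, or simply note $\lr{x}^{-2}\lr{x}^{-12}$ absorbs any polynomial loss. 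So the regular part is routine.

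**The singular part.** The heart of the matter is $\psi_S$, which by \eqref{singular-part-pm-2}--\eqref{a-functions} is a finite sum of terms $\sigma_\pm(x)\, a(k)\, e^{\pm ikx}$ with $a(k)$ built from $T(\mp k)$, $R_\pm(\mp k)$ and indicator functions, all bounded with $|\partial_k a| \lesssim \lr{k}^{-1}$ by \eqref{uniform-estimate-T-R}. Each resulting term is an oscillatory integral
\[
\int_{\reals} e^{it(\pm\lr{k} \mp kX)}\, b(k)\,\widetilde h(k)\,dk, \qquad X := \mp\frac{x}{t},
\]
with $b(k) = a(k)\lr{k}^{-1}$ satisfying $|b| + \lr{k}|\partial_k b| \lesssim \lr{k}^{-1}$. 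I would run the standard stationary-phase argument (as in Lemma \ref{lemma-dispersive-estimates} / \cite[Prop.\ 3.10]{GP20}): the phase $\phi(k) = \pm(\lr{k} - kX)$ has $\phi''(k) = \pm\lr{k}^{-3} \neq 0$, and at most one non-degenerate critical point $k_0 = X/\sqrt{1-X^2}$ when $|X|<1$. The new ingredient is that near $k=0$ the factor $\widetilde h(k)$ vanishes, and $|\widetilde h(k)| \lesssim |k|^{1/2}\|\partial_k\widetilde h\|_{L^2_k}$ by the fundamental theorem of calculus; this kills the low-frequency region $|k| \lesssim t^{-1/2}$ with a net $\lr{t}^{-3/4}$ and, combined with the genuine $t^{-1/2}$ from the critical point in the bulk, the worst term is $O(\lr{t}^{-1})$ after one integration by parts (the $\lr{x}^{-2}$ weight absorbs the factor of $x$ produced when $\partial_k$ hits $e^{\pm ikx}$, just as in the proof of \eqref{local-l-infty-decay-standard-Fourier}). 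One must also separately dispatch the non-oscillatory/non-stationary regime $|X|\geq 1$, where $|\phi'(k)| \gtrsim \lr{k}^{-1}$ has no root and repeated integration by parts gives rapid decay.

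**Main obstacle.** The bookkeeping obstacle is organizing the dyadic decomposition in frequency so that the combination of (i) the $|k|^{1/2}$ gain from $\widetilde h(0)=0$ at low frequencies, (ii) the stationary-phase $t^{-1/2}$ at intermediate frequencies clustered around $k_0$, and (iii) the integration-by-parts decay at high frequencies all add up to exactly $\lr{t}^{-1}$ rather than $\lr{t}^{-3/4}$; one has to be careful that the weight $\lr{x}^{-2}$ — and not merely $\lr{x}^{-1}$ — is genuinely used, since the term where $\partial_k$ falls on $e^{\pm ikx}$ after integration by parts produces a factor $x$ that, paired with $\lr{x}^{-2}$, leaves a $\lr{x}^{-1} \in L^\infty$ bound uniformly in $x$. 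Modulo this, the proof is a direct adaptation of the well-known arguments in \cite{GP20, CP21}, made easier here by the absence of bad frequencies (no internal mode, generic potential), so I would present it by stating the reduction to the oscillatory integrals above and then invoking the stationary-phase estimate with the vanishing-at-zero refinement, referring to \cite[Lemma 3.6]{CP21} and \cite[Proposition 3.10]{GP20} for the routine parts.
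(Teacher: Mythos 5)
Your stationary-phase strategy is genuinely different from the cited approach (\cite{KGVSim}, and see \cite[Lemma 3.6]{CP21} for the Schr\"odinger analogue), which performs a single integration by parts against $e^{\pm it\lr{k}}$ and never introduces the variable $X = x/t$. But your accounting does not add up to $\lr{t}^{-1}$: you estimate the low-frequency region at $\lr{t}^{-3/4}$ and the bulk stationary-point contribution at $\lr{t}^{-1/2}$, and the jump to ``the worst term is $O(\lr{t}^{-1})$ after one integration by parts'' is unsubstantiated --- near the critical point, where $\phi'(k_0)=0$, an integration by parts produces no decay. You are implicitly treating $\widetilde h(0)=0$ as the source of the improvement, but the flat estimate \eqref{local-l-infty-decay-standard-Fourier} already assumes $\hat h(0)=0$ and concludes only $\lr{t}^{-3/4}$; that vanishing alone gives exactly your low-frequency rate, nothing more.

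The missing ingredient is that genericity also forces the \emph{eigenfunction}, not merely $\widetilde h$, to vanish at $k=0$: by \eqref{T-behaviour-around-zero} one has $T(0)=0$ and $R_\pm(0)=-1$, and Lemma \ref{lemma-T-R-properties} gives $|T(k)/k|+|(1+R_\pm(k))/k|\lesssim\lr{k}^{-1}$ with similar bounds on derivatives. After rewriting $e^{ikx}+R_\pm(k)e^{-ikx}=(e^{ikx}-e^{-ikx})+(1+R_\pm(k))e^{-ikx}$, where $(e^{ikx}-e^{-ikx})/k = O(\lr{x})$, every factor $1/k$ produced by the identity $\lr{k}^{-1}e^{\pm it\lr{k}}=(\pm itk)^{-1}\partial_k e^{\pm it\lr{k}}$ is compensated, so a single integration by parts lands the full $\lr{t}^{-1}$ with no low-frequency cut or stationary-phase decomposition; the weight $\lr{x}^{-2}$ absorbs both the factor $x$ from $\partial_k e^{\pm ikx}$ and the polynomial growth of $\partial_k m_\pm(x,k)$ from \eqref{estimates-partial_k^s_m}. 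This extra factor of $k$ in the eigenfunction near the origin is what upgrades $\lr{t}^{-3/4}$ to $\lr{t}^{-1}$; you invoke genericity only as an aside (no bad frequencies), whereas the argument needs it in this quantitative form.
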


The proof of this result is also standard and can be found in \cite{KGVSim}.
An analogous estimate for the Schr\"odinger flow is given in \cite[Lemma 3.6]{CP21}; 
see also the references in \cite{KGVSim,CP21} for other related and more general estimates.

We can extend the result of the above lemma to cases when we deal with the singular
and regular parts of the function $\lr{\wD}^{-1} e^{\pm it \lr{\wD}} P_c h$ 
according to the decomposition in \eqref{h-sing-reg-decomp}. 
First, we have the following analogue of Lemma \ref{local-l-infty-decay-lemma}:

\begin{lemma}
\label{lemma-local-l-infty-decay-singular}
Suppose $V$ satisfies Assumption \ref{V-assumption}. Then, for every $t \geq 0$, we have
\begin{equation}
\label{local-l-infty-decay-singular}
\left\| \lr{x}^{-2} \left( \lr{\wD}^{-1} e^{\pm it \lr{\wD}} P_c h \right)_S \right\|_{L^\infty_x}
  \lesssim \lr{t}^{-1} \| \widetilde{h} \|_{H^1_k},
\end{equation}
\end{lemma}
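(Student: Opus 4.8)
The idea is to reduce \eqref{local-l-infty-decay-singular} to \eqref{local-l-infty-decay-standard-Fourier}, the corresponding bound for the \emph{flat} Klein--Gordon propagator, by exploiting the fact that the singular part $\psi_S$ is, by \eqref{psi-singular-part-decomposition}--\eqref{a-functions}, a finite sum of terms of the form $\sigma_\pm(x)\, a_{\pm,\pm}(k)\, e^{\pm ikx}$ with bounded coefficients $a_{\pm,\pm}(k)$ and symbols involving $T,R_\pm$. Concretely, using \eqref{h-sing-reg-decomp} and \eqref{singular-part-pm-2}, one writes
\begin{equation*}
\Big( \lr{\wD}^{-1} e^{\pm it \lr{\wD}} P_c h \Big)_S
 = \frac{1}{\sqrt{2\pi}} \sum_{\iota\in\{+,-\}} \sigma_\iota(x) \int \psi_{S,\iota}(x,k)\, \lr{k}^{-1} e^{\pm it\lr{k}}\, \widetilde h(k)\, dk,
\end{equation*}
and then expands each $\psi_{S,\iota}$ into its two exponential pieces via \eqref{singular-part-pm-2}. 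Each resulting term is of the shape $\sigma_\iota(x)\, e^{\pm ikx}$ against a symbol $b(k)\lr{k}^{-1}e^{\pm it\lr{k}}$, where $b$ is one of the bounded functions $a_{\iota,\pm}$, i.e. built from $\mathbf 1_\pm$, $T(k)$, $R_\pm(\pm k)$.

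First, I would observe that since $|\sigma_\iota(x)|\le 1$, it suffices to bound $\big\| \lr{x}^{-2} \int b(k)\lr{k}^{-1} e^{\pm ikx} e^{\pm it\lr{k}} \widetilde h(k)\, dk \big\|_{L^\infty_x}$ for each such $b$. The point is that each of these integrals looks exactly like the flat propagator $\lr{D}^{-1} e^{\pm it\lr{D}}$ applied to a function whose (standard) Fourier transform is $b(k)\widetilde h(k)$ (up to the harmless sign of $x$ in the exponential, which just reflects the variable). So I would like to invoke \eqref{local-l-infty-decay-standard-Fourier} with $\widehat{g}(k) := b(k)\widetilde h(k)$; that estimate even gives the stronger $\lr{t}^{-3/4}$ rate with only one weight $\lr{x}^{-1}$, which is more than enough. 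To apply it I must check its two hypotheses: $\widehat g\in H^1_k$ and $\widehat g(0)=0$.

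The vanishing at $k=0$ is the heart of the matter and is where the genericity of $V$ enters. For $b=a_{+,+}=\mathbf 1_+ T + \mathbf 1_-$ or $b=a_{-,+}=\mathbf 1_+ + \mathbf 1_- T(-\cdot)$, the value $\widehat g(0)$ is controlled by $\widetilde h(0)$, which vanishes because $V$ is generic (Lemma \ref{lemma-distorted-fourier}). For $b=a_{+,-}=\mathbf 1_- R_+(-\cdot)$ or $b=a_{-,-}=\mathbf 1_+ R_-$, one uses instead that $R_\pm(0)=-1$ and splits $b\widetilde h = (1+R_\pm)\widetilde h - \widetilde h$ near $k=0$: the first piece vanishes at $0$ because $1+R_\pm(k)=\alpha_\pm k + o(k)$ by \eqref{T-behaviour-around-zero}, and the second because $\widetilde h(0)=0$. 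For the $H^1_k$ bound, I would use $\|\lr{k}^{-1}\widetilde h\|_{L^2}\le\|\widetilde h\|_{L^2}\lesssim\|\widetilde h\|_{H^1_k}$ together with the derivative bounds $|\partial_k T|+|\partial_k R_\pm|\lesssim\lr{k}^{-1}$ from \eqref{uniform-estimate-T-R}, and — at the potential jump of $\mathbf 1_\pm$ at $k=0$ — the continuity of the symbols there ($T(0)=0$, $1+R_\pm(0)=0$) ensures $b(k)\widetilde h(k)$ has no distributional jump, so the product genuinely lies in $H^1_k$ with $\|\widehat g\|_{H^1_k}\lesssim\|\widetilde h\|_{H^1_k}$.

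The main obstacle is precisely this matching at $k=0$: one must be careful that the piecewise definitions of $\psi_{S,\pm}$ (with $\mathbf 1_\pm(k)$ and arguments $\pm k$) glue together into a function that is $H^1$ across the origin, and that the combination of $\widetilde h(0)=0$ with $T(0)=0$, $R_\pm(0)=-1$ is used in the right place for each of the four symbols. Once the reduction to \eqref{local-l-infty-decay-standard-Fourier} is justified term by term, summing the finitely many pieces and reinstating $\sigma_\iota(x)$ (and noting $\lr{x}^{-2}\le\lr{x}^{-1}$) yields \eqref{local-l-infty-decay-singular}, in fact with the rate $\lr{t}^{-3/4}$, hence a fortiori with $\lr{t}^{-1}$ if one instead tracks the argument giving \eqref{local-l-infty-decay} directly; either way the stated bound follows.
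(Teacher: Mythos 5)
The plan to expand $\psi_S$ into a finite sum of oscillatory pieces $\sigma_\iota(x)\,a_{\iota,\pm}(k)\,e^{\pm ikx}$ and exploit the vanishing of $T(0)$, $1+R_\pm(0)$ and $\widetilde h(0)$ (all consequences of genericity) is the right starting point and matches the paper's structure. There is, however, a genuine gap: the reduction to \eqref{local-l-infty-decay-standard-Fourier} cannot produce the stated rate $\lr{t}^{-1}$. That flat estimate uses only the single vanishing $\widehat g(0)=0$ together with $\widehat g \in H^1_k$, and these hypotheses give at best $|\widehat g(k)|\lesssim |k|^{1/2}$, hence the rate $\lr{t}^{-3/4}$ and no more. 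You claim $\lr{t}^{-3/4}$ is \emph{stronger}, but for $t\to\infty$ it is weaker than $\lr{t}^{-1}$, so the ``a fortiori'' step is backwards; and the remark that the better rate follows ``if one instead tracks the argument giving \eqref{local-l-infty-decay} directly'' is precisely the step that needs to be carried out and is not.

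What makes $\lr{t}^{-1}$ possible is not merely the vanishing $\widehat g(0)=0$ but the \emph{linear} vanishing rate: $T(k)/k$ and $(1+R_\pm(k))/k$ are bounded (with good $\lr{k}$-decay), see \eqref{bound-on-T-over-k}--\eqref{bound-on-R-over-k}, and the combination $(e^{ikx}-e^{-ikx})/k$ is bounded by $\lr{x}$ with a controlled $k$-derivative bounded by $\lr{x}^2$ (this is why the weight $\lr{x}^{-2}$ appears). The paper exploits this by writing the singular part for $k\ge 0$ as three contributions: $\sigma_+(x)T(k)e^{ikx}$, $\sigma_-(x)(1+R_-(k))e^{-ikx}$, and $\sigma_-(x)(e^{ikx}-e^{-ikx})$, each of which vanishes linearly at $k=0$ (the last after dividing by $\lr{x}$), and then integrating by parts with $\lr{k}^{-1}e^{it\lr{k}}=(itk)^{-1}\partial_k e^{it\lr{k}}$; the $1/(tk)$ is killed by the linear vanishing and produces exactly $\lr{t}^{-1}$. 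Your split into the \emph{individual} four pieces $a_{\iota,\pm}(k)e^{\pm ikx}$ destroys the cancellation $e^{ikx}-e^{-ikx}=O(k\lr{x})$ that is needed for the $\mathbf{1}_+(k)\cdot 1$ and $\mathbf{1}_+(k)R_-(k)$ pieces: taken separately, each of those only vanishes like $\widetilde h(k)$, i.e.\ $O(|k|^{1/2})$, and you are stuck at $\lr{t}^{-3/4}$. To fix this within your framework you would either need to regroup the pieces as the paper does, or replace \eqref{local-l-infty-decay-standard-Fourier} with a stronger flat-space estimate that accepts an input of the form $\widehat g=\lambda\widehat g_0$ with $|\lambda/k|+|\partial_k\lambda|\lesssim 1$ and outputs $\lr{t}^{-1}\|\widehat g_0\|_{H^1_k}$.
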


For the regular part, the function $\psi_R$ is already decaying according 
to \eqref{psi-regular-decay}, which allows to get estimates with 
growing weight.

\begin{lemma}
Suppose $V$ satisfies Assumption \ref{V-assumption}. Then, for every $t \geq 0$, we have
\begin{equation}
\label{local-l-infty-decay-regular}
\left\| \lr{x}^{10} \left( \lr{\wD}^{-1} e^{\pm it \lr{\wD}} P_c h \right)_R \right\|_{L^\infty_x} 
  \lesssim \lr{t}^{-1} \| \widetilde{h} \|_{H^1_k}.
\end{equation}
\end{lemma}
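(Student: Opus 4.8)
The plan is to run the argument for Lemma \ref{local-l-infty-decay-lemma} while exploiting the strong spatial localization of the regular part $\psi_R$, which is precisely what allows the \emph{growing} weight $\lr{x}^{10}$ to be absorbed. From \eqref{regular-part}, Lemma \ref{bounds-m-pm} and the bounds \eqref{T-behaviour-around-zero}--\eqref{bound-on-R-over-k} one records the pointwise estimates $|\psi_R(x,k)| \lesssim \lr{x}^{-12}\lr{k}^{-1}$ (this is \eqref{psi-regular-decay}) and $|\partial_k\psi_R(x,k)| \lesssim \lr{x}^{-11}\lr{k}^{-1}$, the extra power of $x$ in the latter coming from $\partial_k$ landing on the oscillatory factors $e^{\pm ikx}$ in \eqref{regular-part}; moreover, since $T(k)$ vanishes to first order at $k=0$ while $(m_\mp(x,-k)-1)+R_\mp(k)(m_\mp(x,k)-1)$ vanishes at $k=0$ (because $R_\pm(0)=-1$), one also has $|\psi_R(x,k)| \lesssim |k|\,\lr{x}^{-11}$ for $|k|\lesssim 1$. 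Throughout we may assume by density that $h$ is Schwartz, so that $\widetilde h$ is smooth with fast decay and, since $V$ is generic, $\widetilde h(0)=0$; in particular $|\widetilde h(k)| \le |k|^{1/2}\|\partial_k\widetilde h\|_{L^2_k}$.

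For $0\le t\le 1$ the bound is immediate from \eqref{h-sing-reg-decomp}: using $|\psi_R(x,k)|\lesssim\lr{x}^{-12}\lr{k}^{-1}$,
\[
\big|\lr{x}^{10}\big(\lr{\wD}^{-1}e^{\pm it\lr{\wD}}P_c h\big)_R(x)\big| \lesssim \lr{x}^{-2}\int \lr{k}^{-2}|\widetilde h(k)|\,dk \lesssim \|\widetilde h\|_{L^2_k} \lesssim \lr{t}^{-1}\|\widetilde h\|_{H^1_k}
\]
on this time range. For $t\ge 1$ one starts from
\[
\big(\lr{\wD}^{-1}e^{\pm it\lr{\wD}}P_c h\big)_R(x) = \frac{1}{\sqrt{2\pi}}\int \psi_R(x,k)\,\lr{k}^{-1}e^{\pm it\lr{k}}\,\widetilde h(k)\,dk
\]
and uses $\lr{k}^{-1}e^{\pm it\lr{k}} = (\pm itk)^{-1}\partial_k e^{\pm it\lr{k}}$ to integrate by parts in $k$, working separately on $\{k\ge 0\}$ and $\{k<0\}$ (where $\psi_R$ is given by its two branches). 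The boundary terms vanish: at $k=0$ since $\psi_R(x,k)/k$ stays bounded and $\widetilde h(0)=0$, and at infinity since $\widetilde h$ decays. One is left with
\[
\big(\lr{\wD}^{-1}e^{\pm it\lr{\wD}}P_c h\big)_R(x) = \frac{\mp1}{\sqrt{2\pi}\,it}\int \partial_k\!\Big(\frac{\psi_R(x,k)\widetilde h(k)}{k}\Big)e^{\pm it\lr{k}}\,dk ,
\]
and, expanding $\partial_k\big(\psi_R\widetilde h/k\big) = k^{-1}(\partial_k\psi_R)\widetilde h + k^{-1}\psi_R\,\partial_k\widetilde h - k^{-2}\psi_R\widetilde h$, it remains to bound three integrals in $L^1_k$, uniformly in $x$ after multiplication by $\lr{x}^{10}$.

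Each term is treated by splitting into $|k|\lesssim 1$ and $|k|\gtrsim 1$. On $|k|\lesssim 1$, using $|\psi_R|\lesssim|k|\lr{x}^{-11}$, $|\partial_k\psi_R|\lesssim\lr{x}^{-11}$ and $|\widetilde h(k)|\lesssim|k|^{1/2}\|\partial_k\widetilde h\|_{L^2_k}$, the first and third integrands are $\lesssim \lr{x}^{-11}|k|^{-1/2}\|\partial_k\widetilde h\|_{L^2_k}$ (integrable near $k=0$) while the middle one is $\lesssim \lr{x}^{-11}|\partial_k\widetilde h(k)|$ (integrable over $|k|\lesssim 1$ by Cauchy--Schwarz). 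On $|k|\gtrsim 1$, using $|\psi_R|\lesssim\lr{x}^{-12}\lr{k}^{-1}$ and $|\partial_k\psi_R|\lesssim\lr{x}^{-11}\lr{k}^{-1}$, the three integrands are bounded respectively by $\lr{x}^{-11}\lr{k}^{-2}|\widetilde h|$, $\lr{x}^{-12}\lr{k}^{-2}|\partial_k\widetilde h|$ and $\lr{x}^{-12}\lr{k}^{-3}|\widetilde h|$, each of which lies in $L^1_k$ with norm $\lesssim \lr{x}^{-11}\|\widetilde h\|_{H^1_k}$ by Cauchy--Schwarz. In every case the spatial weight is at worst $\lr{x}^{-11}$, so multiplying by $\lr{x}^{10}$ leaves $\lr{x}^{-1}\in L^\infty_x$, and collecting the prefactor $t^{-1}$ from the integration by parts gives $\big|\lr{x}^{10}\big(\lr{\wD}^{-1}e^{\pm it\lr{\wD}}P_c h\big)_R(x)\big|\lesssim t^{-1}\|\widetilde h\|_{H^1_k}$ for $t\ge1$. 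Combined with the small-time case, this is \eqref{local-l-infty-decay-regular}.

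I do not expect a genuine obstacle here: the argument is the proof of Lemma \ref{local-l-infty-decay-lemma} with the weight $\lr{x}^{-2}$ upgraded to $\lr{x}^{10}$ thanks to the $\lr{x}^{-12}$ localization of $\psi_R$ (in fact any weight up to about $\lr{x}^{11}$ would be admissible). The only point requiring care is the behavior at $k=0$: one must use both that $\widetilde h(0)=0$ (forced by genericity of $V$) and that $\psi_R$ vanishes like $k$ there, since it is exactly these two cancellations that legitimize the single integration by parts and render the arising $k^{-1}$ and $k^{-2}$ singularities integrable; the tracking of powers of $x$ after $\partial_k$ hits the exponentials in $\psi_R$ is routine given Lemma \ref{bounds-m-pm}.
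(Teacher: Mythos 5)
Your proof is correct and runs on the same mechanism as the paper's (omitted, but implicitly indicated) argument: a single integration by parts in $k$ using $\lr{k}^{-1}e^{\pm it\lr{k}} = (\pm itk)^{-1}\partial_k e^{\pm it\lr{k}}$, with the $1/k$ singularity tamed by the two cancellations at $k=0$ — $\widetilde h(0)=0$ (genericity) and $\psi_R(x,0)=0$ (from $T(0)=0$ and $R_\pm(0)=-1$) — and the growing weight absorbed by the $\lr{x}^{-12}$ decay of the regular kernel, at the cost of one power of $x$ when $\partial_k$ lands on the oscillatory factors. The only stylistic difference is that the paper carves $\psi_R$ into five pieces $I,\dots,V$ with smooth low/high-frequency cutoffs (analogously to its proof of Lemma \ref{lemma-local-l-infty-decay-singular}) and handles each by a tailored variant of the same integration by parts, whereas you encapsulate all the needed information in three uniform pointwise bounds on $\psi_R$ and $\partial_k\psi_R$ (namely $|\psi_R|\lesssim\lr{x}^{-12}\lr{k}^{-1}$, $|\partial_k\psi_R|\lesssim\lr{x}^{-11}\lr{k}^{-1}$, and $|\psi_R|\lesssim|k|\lr{x}^{-11}$ for $|k|\lesssim 1$) and then do the $k$-splitting only at the level of the final $L^1_k$ estimate; this is a cleaner presentation of essentially the same computation, and it makes transparent that any weight up to roughly $\lr{x}^{11}$ would also work.
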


\medskip
\section{Set-up and bootstrap}
In this section we set-up 
the upcoming analysis of the evolution equations 
for $a(t)$ and $\chi(t,x)$ in \eqref{system-eq}, and state our main bootstrap propositions
that will imply Proposition \ref{propmain}.

We let $H:= P_c(-\prtl_{xx} + V(x))$ with $V(x) :=-4Q^3$,
where $Q(x)$ is the soliton \eqref{soliton}. 
Then the operator \eqref{estimates-lin} is $\calL = H+1$. 
Recall the definition of
$g(t, x) := e^{it \sqrt{H+1}}  ( \partial_t - i \sqrt{H+1} ) \chi(t, x)$
in \eqref{g-profile}, so that 
$\chi$ is given as in \eqref{chi-in-terms-of-g}
Recall also the notation for our main norm  \eqref{main-normintro},
\begin{equation}
\label{main-norm}
\|v\|_{X} := \|\lr{t}^2 a(t)\|_{L^\infty_t (0, \infty)} + 
\|\prtl_k \wg \|_{L^\infty_t ((0, \infty): L_k^2)} + \|\lr{k}^2 \wg \|_{L^\infty_t ((0, \infty): L_k^2)}.
\end{equation}

\smallskip
\begin{remark}
\label{remark-chi-estimates}
From Lemma \ref{lemma-dispersive-estimates} and Lemma \ref{local-l-infty-decay-lemma}
the following estimates for $\chi$ follow
\begin{equation}
\label{dispersive-decay-for-chi}
\|\partial_x^N \chi(t)\|_{L_x^\infty} \lesssim \lr{t}^{-\frac{1}{2}+\frac{N}{5}} 
  \left( \|\partial_k \wg\|_{L^2_k} + \|\lr{k}^2 \wg\|_{L^2_k} \right)
  \lesssim \lr{t}^{-\frac{1}{2}+\frac{N}{5}} {\| v \|}_X, \qquad N=0,1,2,
\end{equation}
and 
\begin{equation}
\label{local-decay-for-chi}
\left\| \lr{x}^{-2} \chi(t) \right\|_{L^\infty_x} \lesssim \lr{t}^{-1} \| \wg \|_{H^1_k} \lesssim \lr{t}^{-1} {\| v \|}_X.
\end{equation}
\end{remark}

Applying the distorted Fourier transform to the relation \eqref{g-profile},
we obtain 
$\widetilde g(t, k) = e^{it \lr{k}} ( \partial_t - i \lr{k} ) \widetilde \chi(t, k)$
and, after substitution into the second equation in \eqref{system-eq}, 
\begin{equation}\label{wg-evolution}
\partial_t \wg (t, k) = e^{it\lr{k}} \widetilde{\mathcal{F}} P_c N(Q, v),
\end{equation}
where $N(Q, v) = 6Q^2 v^2 + 4 Q v^3 + v^4$ and we recall that the term $v$ 
is decomposed as $v(t,x) = a(t) \rho(x) + \chi (t, x)$. 
Integrating the equation \eqref{wg-evolution} over time give
\begin{equation}
\label{wg-duhamel}
\wg (t,k) = \wg (0,k) + \int_0^t e^{is\lr{k}} \distF P_c N (s, k) ds,
\end{equation}
and applying the partial derivative with respect to $k$, we obtain
\begin{equation}
\label{prtl-wg-duhamel}
\prtl_k \wg (t,k) = \prtl_k \wg (0,k) + \int_0^t \prtl_k \left[ e^{is\lr{k}} \distF P_c N (s, k) \right] ds.
\end{equation}


In the course of our estimates we will need to deal with $\|\partial_t \wg (t)\|_{L_k^2}$,
which can be bounded using the following 
\begin{lemma}
\label{lemma-prtl-t-wg}
Given $\n{v}_X \ll 1$, we have
\begin{equation}
\label{prtl-t-wg-estimates}
\|\prtl_t \wg\|_{L_k^2} \lesssim \lr{t}^{-3/2} \|v\|_X^2
\end{equation}
\end{lemma}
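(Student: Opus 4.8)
The plan is to use the evolution equation \eqref{wg-evolution} directly. Since $\partial_t\wg(t,k)=e^{it\lr{k}}\distF P_c N(Q,v)$ with $N(Q,v)=6Q^2v^2+4Qv^3+v^4$, and since $\distF$ is an isometry on $P_cL^2_x$ (Lemma \ref{lemma-distorted-fourier}), $\|P_c\|_{L^2_x\to L^2_x}\le1$, and $|e^{it\lr{k}}|=1$, the estimate \eqref{prtl-t-wg-estimates} reduces at once to the nonlinear bound $\|N(Q,v)(t)\|_{L^2_x}\lesssim\lr{t}^{-3/2}\|v\|_X^2$. Substituting $v=a\rho+\chi$ and expanding the cubic and quartic polynomials, $N(Q,v)$ becomes a finite sum of monomials of the form $c\,W(x)\,a(t)^{j}\,\chi(t,x)^{m}$, where $c$ is a numerical constant, $2\le j+m\le4$, and $W$ is a (possibly empty) product of copies of $Q$ and $\rho$; in particular $W$ is smooth and exponentially decaying, so all polynomially weighted norms $\|\lr{x}^{N}W\|_{L^2_x}$, $\|\lr{x}^{N}W\|_{L^\infty_x}$ are finite, and the only monomial carrying no such weight is $\chi^4$.

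I would then estimate each monomial using the following inputs, all valid for $\|v\|_X\ll1$: $|a(t)|\le\lr{t}^{-2}\|v\|_X$ from the definition \eqref{main-norm}; the pointwise decay $\|\chi(t)\|_{L^\infty_x}\lesssim\lr{t}^{-1/2}\|v\|_X$ of \eqref{dispersive-decay-for-chi}; the local decay $\|\lr{x}^{-2}\chi(t)\|_{L^\infty_x}\lesssim\lr{t}^{-1}\|v\|_X$ of \eqref{local-decay-for-chi}; and $\|\chi(t)\|_{L^2_x}\lesssim\|\wg(t)\|_{L^2_k}\lesssim\|v\|_X$, which follows from \eqref{chi-in-terms-of-g} and the isometry. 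For a monomial carrying a weight $W$, put one factor of $\chi$ into $\lr{x}^{-2}L^\infty_x$ (absorbing $\lr{x}^2$ into $W$), the remaining factors of $\chi$ into $L^\infty_x$, pull the scalars $a^j$ out, and bound what remains of $W$ in $L^2_x$; the slowest-decaying such term is $Q^2\chi^2$, for which
\[
\|Q^2\chi^2\|_{L^2_x}\le\|\lr{x}^2Q^2\|_{L^2_x}\,\|\lr{x}^{-2}\chi\|_{L^\infty_x}\,\|\chi\|_{L^\infty_x}\lesssim\lr{t}^{-3/2}\|v\|_X^2,
\]
and every other weighted monomial is $O(\lr{t}^{-5/2})$ or better, the extra decay coming from the scalars $|a|\lesssim\lr{t}^{-2}$ or from additional $\lr{x}^{-2}L^\infty_x$ slots. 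For the weightless monomial one writes $\|\chi^4\|_{L^2_x}\le\|\chi\|_{L^\infty_x}^3\|\chi\|_{L^2_x}\lesssim\lr{t}^{-3/2}\|v\|_X^4\lesssim\lr{t}^{-3/2}\|v\|_X^2$, using $\|v\|_X\ll1$ in the last step, and $Q\chi^3$ is handled analogously (two $\chi$'s in $L^\infty_x$, one in $\lr{x}^{-2}L^\infty_x$, and $\lr{x}^2Q$ in $L^\infty_x$). Summing the finitely many monomials gives $\|N(Q,v)(t)\|_{L^2_x}\lesssim\lr{t}^{-3/2}\|v\|_X^2$, hence \eqref{prtl-t-wg-estimates}.

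The only delicate point is the allocation of time decay among the factors of $\chi$ in the top-order nonlinearity. Terms carrying an exponentially localized weight are comfortable, since the localization converts as many copies of $\chi$ as needed into the fast local-decay norm $\lr{x}^{-2}L^\infty_x$; it is precisely for this that the local decay estimate \eqref{local-decay-for-chi}, rather than the plain dispersive bound, is needed, and this is why that lemma was established first. The genuinely quartic term $v^4$, however, has no spatial weight to exploit, so each of its four factors of $\chi$ can only be controlled by the slow free Klein--Gordon rate $\lr{t}^{-1/2}$ in $L^\infty_x$ or by the size-type $L^2_x$ norm; allotting three $L^\infty_x$ slots and one $L^2_x$ slot produces exactly $\lr{t}^{-3/2}$, with nothing to spare, and the argument closes only because the resulting bound is cubic in $\|v\|_X$, hence subquadratic once $\|v\|_X\ll1$.
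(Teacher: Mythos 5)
Your proposal is correct and follows essentially the same path as the paper: reduce via the $\distF$ isometry to $L^2_x$ bounds on $N(Q,v)$, split $v=a\rho+\chi$, absorb the exponential weights $Q,\rho$ and use the local decay \eqref{local-decay-for-chi} on the weighted monomials and the dispersive bound \eqref{dispersive-decay-for-chi} together with $\|\chi\|_{L^2_x}\lesssim\|v\|_X$ on the weightless $\chi^4$. One small bookkeeping slip: in your $Q\chi^3$ allocation you place $\lr{x}^2Q$ in $L^\infty_x$ but then have no $L^2_x$ slot left — put $\lr{x}^2Q$ in $L^2_x$ instead (as you do for $Q^2\chi^2$); also note the paper puts both $\chi$ factors in $Q^2\chi^2$ into the $\lr{x}^{-2}L^\infty_x$ norm, giving $\lr{t}^{-2}$ rather than your $\lr{t}^{-3/2}$, though either rate suffices here.
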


\begin{proof}
From the evolution equation \eqref{wg-evolution}, since $\distF$ is an $L_x^2 \to L_k^2$ isometry, we have 
\begin{equation*}
\|\prtl_t \wg\|_{L_k^2} \lesssim \|Q^2v^2\|_{L_x^2} + \|Q v^3\|_{L_x^2} + \|v^4\|_{L_x^2}.
\end{equation*}
We substitute the decomposition $v(t, x) = a(t) \rho(x)+ \chi(t,x)$, 
and bound the first two terms using the local decay \eqref{local-decay-for-chi} as
\begin{equation*}
\|Q^2v^2\|_{L_x^2} \lesssim |a(t)|^2 + \|\lr{x}^{-2} \chi\|^2_{L_x^\infty} \lesssim \lr{t}^{-2} \|v\|_X^2,
\end{equation*}
with a similar estimate for $\|Qv^3\|_{L_x^2}$.
For the remaining term we have 
{
\begin{equation*}
\|v^4\|_{L_x^2} \lesssim |a(t)|^4 + \| \chi^4\|_{L_x^2} \lesssim \lr{t}^{-8} \|v\|_X^4 + \| \chi\|_{L_x^2} \| \chi\|^3_{L_x^\infty}.
\end{equation*}
}
The term $\| \chi\|_{L_x^\infty}$ is bounded by \eqref{dispersive-decay-for-chi}, 
while $\|\chi\|_{L_x^2} \lesssim \|\wg \|_{L_k^2} \leq \|v\|_X$, using \eqref{isometry}. 
Then, we have $\|v^4\|_{L_x^2} \lesssim \lr{t}^{-3/2} \|v\|^4_X,$
and combining all the estimates we obtain the desired bound \eqref{prtl-t-wg-estimates}.
\end{proof}

In the following subsections, we give estimates on the three norms in \eqref{main-norm}.

\smallskip
\subsection{Estimates for the first norm in \eqref{main-norm}}

\begin{proposition}
\label{a(t)-norm-estimates}
Given $\n{v}_X \ll 1$, for every $t \geq 0$ we have
\begin{equation}
\label{a-bound}
\lr{t}^2 |a(t)| \lesssim |a(0)| + \|v\|_X^2.
\end{equation}
\end{proposition}

\begin{proof}
Using the integral equation for $a(t)$ in \eqref{integral-eq}, with $F[v]$ as in \eqref{system-eq}, we have
\begin{equation*}
\begin{aligned}
\lr{t}^2 a(t) &= \lr{t}^2 e^{-\Omega t} \left[ a (0) + \frac{1}{2\Omega} \int_0^\infty e^{-\Omega s} F[v](s) ds \right]\\[3pt]
& \quad - 
\frac{\lr{t}^2 e^{-\Omega t}}{2\Omega} \int_0^t e^{\Omega s} F[v](s) ds - 
\frac{\lr{t}^2}{2\Omega} \int_t^\infty e^{-\Omega (s-t)} F[v](s) ds\\[3pt]
& := \mathcal{I}_1 (t) + \mathcal{I}_2(t) + \mathcal{I}_3(t).
\end{aligned}
\end{equation*}
To bound $F[v](s)$ 
we use the definition 
in \eqref{estimates-lin} and the decomposition of $v$ in \eqref{v-decomp} 
to get
\begin{equation}
\label{F-v-bound}
|F[v](s)| \lesssim |\lr{v^2(s), \rho}|+ |\lr{v^3(s), \rho}| + |\lr{v^4(s), \rho}| \lesssim \norm{\lr{x}^{-2}v(s)}{x}{\infty}^2 \lesssim \lr{s}^{-2} \n{v}_X^2,
\end{equation}
where for the last inequality we used \eqref{local-decay-for-chi}.
Then, using \eqref{F-v-bound}, direct computations show that 
\begin{equation*}
|\mathcal{I}_1(t)|\lesssim |a(0)| +\n{v}_X^2, \quad \text{and} \quad |\mathcal{I}_2(t)|, |\mathcal{I}_3(t)|\lesssim \n{v}_X^2.    
\end{equation*}
Combining these estimates, we obtain the desired result.
\end{proof}

\smallskip
\subsection{Estimates for the second norm in \eqref{main-norm}}
Here, we give the $L_k^2$ bound for $\partial_k \wg$. 

\begin{proposition}
\label{proposition-partial-k-L2-norm}
Given $\n{v}_X \ll 1$, for every $t \geq 0$ we have \begin{equation}
\label{prtl-k-wg-norm-bound}
\normm{\partial_k \wg (t)}{k} \lesssim \normm{\partial_k \wg (0)}{k} + \|v\|_X^2.
\end{equation}
\end{proposition}

The proof is based on the estimates for the terms in the Duhamel expression
for $\partial_k \wg$ in \eqref{prtl-wg-duhamel}. Recall that, according to the decomposition $v(s,x) = a(s) \rho(x) + \chi(s,x)$, we have
\begin{equation}
\label{FN_c-decomposition}
\distF N_c (s, k) = \distF P_c \left(6Q^2 (a(s)\rho + \chi(s))^2 +4 Q (a(s)\rho + \chi(s))^3 + (a(s)\rho + \chi(s))^4 \right).
\end{equation}
Then the $L_k^2$ bound for $\partial_k \wg$ is given by 
\begin{equation}
\label{partial-k-wg-intermediate-bound}
\normm{\partial_k \wg(t)}{k} \lesssim \normm{\partial_k \wg(0)}{k} + \normm{A_1(t)}{k} + \normm{A_2(t)}{k} + \normm{A_3(t)}{k}, 
\end{equation}
where 
\begin{equation}
\label{A-j-functions}
\begin{aligned}
& A_1(t,k) := \int_0^t \partial_k \left[e^{is\lr{k}} \distF P_c Q^2 (a(s)\rho + \chi(s))^2(k) \right] ds,\\
& A_2(t,k) := \int_0^t \partial_k \left[e^{is\lr{k}} \distF P_c Q (a(s)\rho + \chi(s))^3(k) \right] ds,\\
& A_3(t,k) := \int_0^t \partial_k \left[e^{is\lr{k}} \distF P_c \left(a(s)\rho + \chi(s)\right)^4(k) \right] ds.
\end{aligned}
\end{equation}
To prove Proposition \ref{proposition-partial-k-L2-norm}, it suffices to show that $\normm{A_j(t)}{k} \lesssim \n{v}_X^2$ for every $j=1,2,3$. We will verify these estimates in the following lemmas as well as in Section \ref{section-quadratic} and Section \ref{section-quartic}. 

\begin{lemma}
\label{lemma-A-1-L2-norm-bound}
Given $\n{v}_X\ll 1$, for every $t\geq 0$ we have 
\begin{equation}
\label{A-1-L2-norm-bound}
\normm{A_1(t)}{k} \lesssim \normm{\int_0^t
s \frac{k}{\lr{k}} e^{is\lr{k}} \distF P_c Q^2 \chi^2(s) ds}{k} + \n{v}_X^2.
\end{equation}
\end{lemma}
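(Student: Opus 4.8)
The plan is to open up the square in the definition \eqref{A-j-functions} of $A_1$,
$$
Q^2(a(s)\rho+\chi(s))^2 = a(s)^2 Q^2\rho^2 + 2a(s) Q^2\rho\,\chi(s) + Q^2\chi(s)^2,
$$
and then to distribute the $k$-derivative by Leibniz, using $\partial_k\big[e^{is\lr{k}}\distF P_c f\big] = is\tfrac{k}{\lr{k}}e^{is\lr{k}}\distF P_c f + e^{is\lr{k}}\partial_k\distF P_c f$ on each of the three pieces $f\in\{Q^2\rho^2,\ Q^2\rho\,\chi(s),\ Q^2\chi(s)^2\}$ (with the scalar factors $a(s)^2$, $2a(s)$, $1$ pulled out). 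This produces six terms. The one kept on the right-hand side of \eqref{A-1-L2-norm-bound} is exactly $\int_0^t s\tfrac{k}{\lr{k}}e^{is\lr{k}}\distF P_c Q^2\chi^2(s)\,ds$ (up to the harmless constant $i$, irrelevant for the $L^2_k$-norm); the remaining five terms will all be absorbed into $\n{v}_X^2$.

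For the five error terms I would bound the $L^2_k$-norm of $A_1(t)$ by integrating in $s$ the $L^2_k$-norms of the integrands, using four ingredients: the pointwise bound $|a(s)|\lesssim\lr{s}^{-2}\n{v}_X$ (immediate from the definition of $\n{v}_X$); the $L^2_x\to L^2_k$ isometry \eqref{isometry} (for the pieces where $\partial_k$ hits $e^{is\lr{k}}$, noting $|k/\lr{k}|\le 1$); the bound \eqref{derivative-in-fourier-bound}, $\|\partial_k\distF P_c f\|_{L^2_k}\lesssim\|\lr{x}f\|_{L^2_x}$ (for the pieces where $\partial_k$ hits the distorted transform); and the exponential decay of $Q$, which lets me freely trade powers of $\lr{x}$ against $Q^2\rho$ or $Q^2\rho^2$ and then apply the local decay estimate \eqref{local-decay-for-chi}, $\|\lr{x}^{-2}\chi(s)\|_{L^\infty_x}\lesssim\lr{s}^{-1}\n{v}_X$. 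Concretely: the two $a(s)^2 Q^2\rho^2$ terms have integrands of size $\lesssim s\,|a(s)|^2\|Q^2\rho^2\|_{L^2_x}\lesssim s\lr{s}^{-4}\n{v}_X^2$ and $\lesssim|a(s)|^2\|\lr{x}Q^2\rho^2\|_{L^2_x}\lesssim\lr{s}^{-4}\n{v}_X^2$; the two $2a(s)Q^2\rho\,\chi(s)$ terms, using $\|\lr{x}^j Q^2\rho\,\chi(s)\|_{L^2_x}\lesssim\lr{s}^{-1}\n{v}_X$ for $j=0,1$, have integrands of size $\lesssim s\lr{s}^{-3}\n{v}_X^2$ and $\lesssim\lr{s}^{-3}\n{v}_X^2$; and the surviving $Q^2\chi^2(s)$ term where $\partial_k$ hits $\distF$ is $\lesssim\|\lr{x}Q^2\chi^2(s)\|_{L^2_x}\lesssim\|\lr{x}^{-2}\chi(s)\|_{L^\infty_x}^2\|\lr{x}^5Q^2\|_{L^2_x}\lesssim\lr{s}^{-2}\n{v}_X^2$. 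All five upper bounds are integrable in $s$ on $(0,\infty)$, which gives the claimed $\n{v}_X^2$ control.

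The content of this lemma is thus mostly organizational: it peels off from $A_1$ every contribution that carries enough spatial localization (from $Q$) and temporal decay (from $a(s)$ and the local decay of $\chi$) to be handled by soft estimates, isolating the single term whose treatment genuinely requires harmonic analysis. There is no real obstacle inside the present lemma; the hard part — extracting sufficient time decay from $\int_0^t s\tfrac{k}{\lr{k}}e^{is\lr{k}}\distF P_c Q^2\chi^2(s)\,ds$, where the growing factor $s$ must be defeated by the oscillation of $e^{is\lr{k}}$ together with the distorted Fourier structure of the quadratic self-interaction of $\chi$ (for which no spatial weight is available) — is deferred to Section \ref{section-quadratic}.
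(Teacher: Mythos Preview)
Your proof is correct and follows essentially the same approach as the paper: you expand the square and distribute $\partial_k$ via Leibniz, while the paper distributes $\partial_k$ first (into $A_{11}$ and $A_{12}$) and then expands the square inside each piece, but the resulting six terms and their estimates (via \eqref{isometry}, \eqref{derivative-in-fourier-bound}, the exponential decay of $Q$, and the local decay \eqref{local-decay-for-chi}) are identical. The organizational commentary at the end is also accurate.
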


\begin{proof}
Distributing $\partial_k$ 
and using Minkowski inequality, we obtain 
\begin{equation*}
\begin{aligned}
\normm{A_1(t)}{k} \lesssim&~ \int_0^t \normm{ \partial_k \distF P_c Q^2 (a(s)\rho + \chi(s))^2 }{k}ds\\
& + \normm{\int_0^t s \frac{k}{\lr{k}} e^{is\lr{k}} \distF P_c Q^2 (a(s)\rho + \chi(s))^2 ds}{k}\\
& =: A_{11} + A_{12}.
\end{aligned}
\end{equation*}
The first term $A_{11}$ is easily bounded using \eqref{derivative-in-fourier-bound},
followed by
the local decay estimates \eqref{local-decay-for-chi}: 
\begin{equation*}
\begin{aligned}
A_{11} &\lesssim \int_0^t \normm{\lr{x} Q^2 (a(s) \rho + \chi(s))^2}{x}ds
\\
& \lesssim \int_0^t a^2(s) ds + \int_0^t \norm{\lr{x}^{-2} \chi(s)}{x}{\infty}^2 ds \lesssim \n{v}_X^2. 
\end{aligned}
\end{equation*}
For the second term $A_{12}$ we bound it similarly, term by term, as follows 
\begin{equation*}
\begin{aligned}
A_{12} & \lesssim \int_0^t s\; a^2(s) \normm{\distF P_c Q^2 \rho^2}{k} ds + \int_0^t s\; |a(s)| \normm{\distF P_c Q^2 \rho \chi(s)}{k} ds\\
& \quad + \normm{\int_0^t
s \frac{k}{\lr{k}} e^{is\lr{k}} \distF P_c Q^2 \chi^2(s)  ds}{k}\\
& := B_1 + B_2 + B_3.
\end{aligned}
\end{equation*}
Using \eqref{isometry} and \eqref{local-decay-for-chi} we obtain
\begin{equation*}
\begin{aligned}
& B_1 \lesssim \n{v}_X^2 \int_0^t \frac{s}{\lr{s}^4} ds \lesssim \n{v}_X^2, 
\\
& B_2 \lesssim \int_0^t s\; |a(s)| \normm{\lr{x}^{-2} \chi(s)}{x} ds \lesssim \n{v}_X^2 \int_0^t \frac{s}{\lr{s}^3} ds \lesssim \n{v}_X^2,
\end{aligned}
\end{equation*}
and $B_3$ is the term which appears in \eqref{A-1-L2-norm-bound}.
This completes the proof. \end{proof}

\begin{remark}
Note that in the proof of Lemma \ref{lemma-A-1-L2-norm-bound}, the term $B_3$ can not be bounded simply
 using Minkowski's inequality, as the corresponding integral in $ds$ would diverge when $t \to \infty$. 
 Indeed, we would have
\begin{equation}
\label{divergent-B3}
B_3 \lesssim \int_0^t s \normm{\lr{x}^{-2} \chi(s)}{k}^2 ds \lesssim \n{v}_X^2 \int_0^t \frac{s}{\lr{s}^2} ds \approx \n{v}_X^2 \log t.
\end{equation}
To get a $O(\n{v}_X^2)$ bound for the term $\normm{A_1(t)}{k}$, 
we need some more refined analysis, which is provided in Section \ref{section-quadratic}.
\end{remark}

\begin{lemma}
\label{lemma-A-2-L2-norm-bound}
Given $\n{v}_X\ll 1$, for every $t\geq 0$ we have 
\begin{equation}\label{A-2-L2-norm-bound}
\normm{A_2(t)}{k} \lesssim  \n{v}_X^3.
\end{equation}
\end{lemma}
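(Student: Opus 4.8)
The plan is to estimate the cubic term $A_2(t,k)$ in \eqref{A-j-functions} by the same splitting used for $A_1$ in Lemma \ref{lemma-A-1-L2-norm-bound}: distribute the $\partial_k$ so that
\[
A_2(t,k) = \int_0^t e^{is\lr{k}} \partial_k \big[ \distF P_c Q (a\rho+\chi)^3 \big](s,k)\, ds
  + i \int_0^t s\, \frac{k}{\lr{k}}\, e^{is\lr{k}} \distF P_c Q (a\rho+\chi)^3(s,k)\, ds,
\]
and bound the two pieces by Minkowski's inequality. For the first piece I would use the bound \eqref{derivative-in-fourier-bound}, $\normm{\partial_k \distF h}{k} \lesssim \normm{\lr{x} h}{x}$, together with the rapid decay of $Q$ to absorb the weight $\lr{x}$ into $\lr{x}Q \in L^\infty_x$, reducing matters to $\normm{\lr{x}^{1/3} (a\rho+\chi)}{x}^3$-type quantities; after expanding $(a\rho+\chi)^3$ term by term this is controlled by $|a(s)|^3$ and powers of $\norm{\lr{x}^{-2}\chi(s)}{x}{\infty}$ via the local decay \eqref{local-decay-for-chi} (noting $Q$ decays fast enough to compensate the $\lr{x}^2$ weights), each of which is integrable in $s$ since $|a(s)| \lesssim \lr{s}^{-2}\n{v}_X$ and $\norm{\lr{x}^{-2}\chi(s)}{x}{\infty} \lesssim \lr{s}^{-1}\n{v}_X$.

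The point of the cubic case, in contrast to the quadratic term $A_1$, is that there is \emph{no} borderline logarithmically divergent contribution: the second piece, carrying the factor $s$, is handled directly by Minkowski. Indeed $\normm{s\frac{k}{\lr{k}} e^{is\lr{k}} \distF P_c Q (a\rho+\chi)^3(s)}{k} \lesssim s\, \normm{\lr{x}^{-N} Q (a\rho+\chi)^3(s)}{x}$ for any $N$ by \eqref{isometry} and the fast decay of $Q$, and expanding $(a\rho+\chi)^3$ gives terms bounded by $s\big(|a(s)|^3 + |a(s)|^2 \norm{\lr{x}^{-2}\chi(s)}{x}{\infty} + |a(s)| \norm{\lr{x}^{-2}\chi(s)}{x}{\infty}^2 + \normm{\lr{x}^{-2}\chi(s)}{x}^2 \norm{\lr{x}^{-2}\chi(s)}{x}{\infty}\big)$; using \eqref{local-decay-for-chi} and $\normm{\chi(s)}{x} \lesssim \n{v}_X$ each such term is $\lesssim s\,\lr{s}^{-3}\n{v}_X^3$, whose time integral converges. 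Collecting the two pieces yields $\normm{A_2(t)}{k} \lesssim \n{v}_X^3$.

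The only mild subtlety — and the step I would be most careful about — is the very last type of term $\int_0^t s\, \normm{\lr{x}^{-2}\chi(s)}{x}^2 \norm{\lr{x}^{-2}\chi(s)}{x}{\infty}\, ds$, where one $\chi$-factor must be estimated in $L^2_x$ rather than via local decay; here the extra power of $\norm{\lr{x}^{-2}\chi}{x}{\infty} \lesssim \lr{s}^{-1}\n{v}_X$ relative to the quadratic case supplies precisely the gain $\lr{s}^{-1}$ that turns the $\lr{s}^{-1}$ (logarithmically divergent) integrand of $B_3$ in \eqref{divergent-B3} into the integrable $\lr{s}^{-2}$, so no refined oscillatory analysis is needed. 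Everything else is a routine application of Minkowski, \eqref{isometry}, \eqref{derivative-in-fourier-bound}, \eqref{local-decay-for-chi}, and the exponential decay of $Q$.
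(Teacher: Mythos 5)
Your overall plan matches the paper's route (which is itself terse and just says ``similar to Lemma \ref{lemma-A-1-L2-norm-bound}, but each term is directly bounded''): distribute $\partial_k$, treat the $\partial_k \distF$-piece by \eqref{derivative-in-fourier-bound}, and treat the $s\,\frac{k}{\lr{k}}$-piece by Minkowski's inequality, using $Q$'s rapid decay to turn every $\chi$-factor into a local-decay factor. Most of your estimates are fine. However, the discussion of the one term you flag as the ``mild subtlety'' --- the pure $Q\chi^3$ contribution in the $s$-weighted piece --- contains a genuine error.

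You assert that ``one $\chi$-factor must be estimated in $L^2_x$ rather than via local decay,'' and you write the bound $\normm{\lr{x}^{-2}\chi(s)}{x}^2\norm{\lr{x}^{-2}\chi(s)}{x}{\infty}$. This is wrong on two counts. First, the bound itself does not give the claimed $\lr{s}^{-3}\n{v}_X^3$: the $L^2_x$ factors carry no time decay beyond what one can scrounge from interpolation, and the best you can easily say is $\normm{\lr{x}^{-2}\chi(s)}{x}\lesssim \n{\chi(s)}_{L^\infty_x}\lesssim \lr{s}^{-1/2}\n{v}_X$, so the stated product is $\lesssim \lr{s}^{-2}\n{v}_X^3$ --- and then $\int_0^t s\,\lr{s}^{-2}\,ds\sim\log t$ is exactly the divergence you were trying to avoid, not an improvement on it. Second, and more to the point, no $\chi$-factor needs to go in $L^2_x$ here at all. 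Because $Q$ decays exponentially, it absorbs all six powers of $\lr{x}$: pointwise $|Q\chi^3|\leq (Q\lr{x}^6)\,\big(\lr{x}^{-2}|\chi|\big)^3$, so after \eqref{isometry},
\begin{equation*}
\normm{\distF P_c Q\chi^3(s)}{k} \lesssim \normm{Q\chi^3(s)}{x}
  \lesssim \normm{Q\lr{x}^6}{x}\,\norm{\lr{x}^{-2}\chi(s)}{x}{\infty}^3
  \lesssim \lr{s}^{-3}\n{v}_X^3,
\end{equation*}
and then $\int_0^t s\,\lr{s}^{-3}\,ds\lesssim 1$ as needed. The heuristic in your last sentence (the cubic term has one more factor of $\norm{\lr{x}^{-2}\chi}{x}{\infty}\lesssim\lr{s}^{-1}\n{v}_X$ than the quadratic $B_3$) is the right intuition, but it only becomes a proof once you realize that both in the quadratic case and here \emph{all} $\chi$-factors go in local-decay $L^\infty_x$; the divergence of $B_3$ in \eqref{divergent-B3} is not caused by having to put a factor in $L^2_x$, it is simply that two local-decay factors give only $\lr{s}^{-2}$, which with the extra $s$ is borderline, whereas three give $\lr{s}^{-3}$, which is safe.
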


\begin{proof}
The proof is similar to the proof of Lemma \ref{lemma-A-1-L2-norm-bound}.
However, unlike in \eqref{divergent-B3}, each term, which appears after applying $\partial_k$ inside the $A_2(t)$ integral
in \eqref{A-j-functions} and expanding the cubic term $(a(s)\rho + \chi(s))^3$, is directly bounded by $\n{v}_X^3$.
\end{proof}

\begin{lemma}
\label{lemma-A-3-L2-norm-bound}
Given $\n{v}_X\ll 1$, for every $t\geq 0$ we have 
\begin{equation}
\label{A-3-L2-norm-bound}
\normm{A_3(t)}{k} \lesssim \normm{\int_0^t \partial_k \left[ e^{is\lr{k}} \distF P_c \chi^4(s) \right] ds}{k} + \n{v}_X^4.
\end{equation}
\end{lemma}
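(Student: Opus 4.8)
The plan is to expand
$(a(s)\rho+\chi(s))^4=\sum_{j=0}^{4}\binom{4}{j}a(s)^j\rho^j\chi(s)^{4-j}$
inside the definition of $A_3$ in \eqref{A-j-functions}, isolate the pure $\chi^4$ term (which becomes the first term on the right-hand side of \eqref{A-3-L2-norm-bound}), and show that every summand with $j\geq 1$ contributes $O(\n{v}_X^4)$. The gain for $j\geq 1$ has two sources: each such term carries at least one factor $a(s)$, with $|a(s)|\leq\lr{s}^{-2}\n{v}_X$ by the definition of the $X$-norm \eqref{main-norm}; and each carries at least one factor of the Schwartz function $\rho$, which we will use to absorb all polynomial weights in $x$.

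Fix $j\in\{1,2,3,4\}$. Using $\partial_k\big[e^{is\lr{k}}G(s,k)\big]=is\tfrac{k}{\lr{k}}e^{is\lr{k}}G(s,k)+e^{is\lr{k}}\partial_k G(s,k)$ and Minkowski's inequality, the contribution of the $j$-th term to $\normm{A_3(t)}{k}$ is bounded by
\begin{equation*}
\int_0^t\normm{\partial_k\distF P_c\big(a(s)^j\rho^j\chi(s)^{4-j}\big)}{k}\,ds
+\int_0^t s\,\normm{\distF P_c\big(a(s)^j\rho^j\chi(s)^{4-j}\big)}{k}\,ds .
\end{equation*}
By \eqref{derivative-in-fourier-bound} the first integrand is $\lesssim\normm{\lr{x}\,a(s)^j\rho^j\chi(s)^{4-j}}{x}$, and by the isometry \eqref{isometry} the second integrand is $\lesssim s\,\normm{a(s)^j\rho^j\chi(s)^{4-j}}{x}$. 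In either case I would pull out $|a(s)|^j\leq\lr{s}^{-2j}\n{v}_X^j$, write $\rho^j$ with as many powers of $\lr{x}$ as needed (it remains Schwartz), and estimate the remaining $\chi(s)^{4-j}$ factor by $\norm{\lr{x}^{-2}\chi(s)}{x}{\infty}^{4-j}\lesssim\lr{s}^{-(4-j)}\n{v}_X^{4-j}$ via the local decay \eqref{local-decay-for-chi}, absorbing the surplus weights $\lr{x}^{2(4-j)}$ into $\rho^j$ (for $j=4$ there is no $\chi$ factor and $\normm{\lr{x}\rho^4}{x}\lesssim 1$). This produces the decay $\lr{s}^{-(j+4)}\n{v}_X^4$, so that even the term carrying the extra factor $s$ is integrated against $s\,\lr{s}^{-(j+4)}$, which is finite uniformly in $t$ for every $j\geq 1$. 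Summing over $j=1,\dots,4$ yields the $\n{v}_X^4$ term in \eqref{A-3-L2-norm-bound}.

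The main — in fact the only — obstacle is that this argument breaks down exactly for $j=0$: the term $\chi^4$ carries no $a$-factor, so $\n{\chi(s)}_{L^\infty_x}^4\lesssim\lr{s}^{-2}\n{v}_X^4$ is not integrable once multiplied by the factor $s$ coming from $\partial_k e^{is\lr{k}}$, and $\lr{x}\chi^4$ is not controlled in $L^2_x$ by the $X$-norm. A finer analysis — exploiting the oscillation of $e^{is\lr{k}}$ and the structure of $\distF P_c$ acting on $\chi^4$ — is therefore required, and is deferred to Section \ref{section-quartic}; accordingly the $\chi^4$ term is simply retained on the right-hand side, which gives \eqref{A-3-L2-norm-bound}.
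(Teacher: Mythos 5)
Your proof is correct and takes essentially the same route as the paper's, which for this lemma is a one-line reference to the arguments of Lemmas \ref{lemma-A-1-L2-norm-bound}--\ref{lemma-A-2-L2-norm-bound} ("similar to the proof of previous lemmas, using the fact that $\rho(x)$ is an exponentially decaying function and \eqref{local-decay-for-chi}"). Your binomial expansion, the product rule for $\partial_k$, the use of \eqref{derivative-in-fourier-bound} and \eqref{isometry}, the bookkeeping $|a(s)|^j\lr{s}^{-(4-j)}\lesssim\lr{s}^{-(j+4)}\n{v}_X^4$ for $j\geq1$, and the identification that only the pure $\chi^4$ term must be retained, are exactly the details that reference is pointing at.
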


\begin{proof}
The proof is similar to the proof of previous lemmas, using the fact that $\rho(x)$ is an exponentially decaying function
and \eqref{local-decay-for-chi}.
\end{proof}

\begin{remark}
Since there are no (exponentially) decaying function in front of $\chi^4$
we cannot just rely on the local decay estimates \eqref{local-decay-for-chi}
to bound the quartic terms on the left-hand side of \eqref{A-3-L2-norm-bound}.
We will bound these terms in Section \ref{section-quartic} ,
using some multilinear analysis in the spirit of \cite{CP21,GP20}.
\end{remark}

The proof of Proposition \ref{proposition-partial-k-L2-norm} is now 
straightforward assuming that both the quadratic term in \eqref{A-1-L2-norm-bound} and the 
quartic term in \eqref{A-3-L2-norm-bound} are bounded by $\n{v}_X^2$. 
These two bounds will be proven, respectively, in the upcoming Sections \ref{section-quadratic} and \ref{section-quartic}. 

\begin{proof}[Proof of Proposition \ref{proposition-partial-k-L2-norm}]
Under the above assumption, Lemmas \ref{lemma-A-1-L2-norm-bound}--\ref{lemma-A-3-L2-norm-bound}
imply
\begin{equation*}
\normm{A_1(t)}{k} + \normm{A_2(t)}{k} + \normm{A_3(t)}{k} \lesssim \n{v}_X^2.
\end{equation*}
This bound applied to \eqref{partial-k-wg-intermediate-bound} completes the proof. 
\end{proof}

\smallskip
\subsection{Estimates for the third norm in \eqref{main-norm}}
Here we give the $L_k^2$ bound for $\normm{\lr{k}^2 \wg}{k}$.

\begin{proposition}
\label{proposition-k2-wg-norm-bound}
For every $t \geq 0$ we have 
\begin{equation}
\label{k2-wg-norm-bound}
\normm{\lr{k}^2 \wg (t)}{k} \lesssim \normm{\lr{k}^2 \wg (0)}{k} + \|v\|_X^2.
\end{equation}
\end{proposition}

To prove the above statement, we use the Duhamel's expression \eqref{wg-duhamel}, 
apply Minkowski and recall 
\eqref{estimates-lin}  
to get
\begin{equation}
\label{k2-wg-bound-decomposition}
\begin{aligned}
\normm{\lr{k}^2 \wg (t)}{k} & \lesssim \normm{\lr{k}^2 \wg (0)}{k} + \int_0^t \normm{ \lr{k}^2 \distF P_c Q^2 v^2(s)}{k} ds+ \int_0^t \normm{ \lr{k}^2 \distF P_c Q v^3(s)}{k} ds\\
& \quad + \int_0^t \normm{ \lr{k}^2 \distF P_c v^4(s)}{k} ds\\
& := \normm{\lr{k}^2 \wg (0)}{k} + \mathcal{I}_1(t) + \mathcal{I}_2(t) + \mathcal{I}_3(t).
\end{aligned}
\end{equation}
Therefore, it suffices to show the following:

\begin{lemma}
\label{lemma-k2-wg-I-j-bound}
For every $t \geq 0$ we have 
\begin{equation}
\label{k2-wg-I-j-bound}
\mathcal{I}_j (t) \lesssim \n{v}_X^2 \quad \text{for every  } j=1,2,3.
\end{equation}
\end{lemma}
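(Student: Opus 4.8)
The plan is to estimate each of the three integrals $\mathcal{I}_j(t)$ in \eqref{k2-wg-bound-decomposition} by exploiting that $\lr{k}^2\distF = \distF \lr{\wD}^2 P_c = \distF(H+1)P_c$, so that $\|\lr{k}^2\distF P_c W\|_{L^2_k} = \|(H+1)P_c W\|_{L^2_x} \lesssim \|W\|_{H^2_x}$ by \eqref{k-s-h-tilde-bound}. After substituting $v(s,x) = a(s)\rho(x) + \chi(s,x)$ and expanding, every term in $\mathcal{I}_1$ and $\mathcal{I}_2$ carries at least one factor of the exponentially decaying $Q$, and at least one factor of $a(s)\rho(x)$ or of $\chi(s,x)$ localized by $\lr{x}^{-2}$, so the $H^2_x$ norm is controlled by $|a(s)| + \|\lr{x}^{-2}\chi(s)\|_{L^\infty_x}$ together with the higher-regularity pieces $\|\chi(s)\|_{H^2_x}$; since $Q$ and $\rho$ provide spatial localization, a loss of $\lr{x}^{-2}$ on $\chi$ is affordable. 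Using \eqref{dispersive-decay-for-chi} with $N=1,2$ to control $\|\partial_x^N\chi\|_{L^\infty_x} \lesssim \lr{s}^{-1/2+N/5}\|v\|_X$, the bound $\|\chi(s)\|_{H^2_x} \lesssim \|\lr{k}^2\wg\|_{L^2_k} \leq \|v\|_X$ via \eqref{isometry}-\eqref{k-s-h-tilde-bound}, the local decay \eqref{local-decay-for-chi}, and $|a(s)| \lesssim \lr{s}^{-2}\|v\|_X$, each integrand in $\mathcal{I}_1$ is seen to decay like $\lr{s}^{-2}\|v\|_X^2$ (from the quadratic-in-$v$ structure and at least one genuinely localizing factor), which is integrable; $\mathcal{I}_2$ is cubic in $v$ and decays even faster. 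Distributing the operator $(H+1)$ onto a product produces terms with up to two derivatives landing on $\chi$, but these are multiplied by Schwartz functions ($Q$, $\rho$, and their derivatives), so no term loses integrability.

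The main obstacle is the quartic term $\mathcal{I}_3(t) = \int_0^t \|\lr{k}^2\distF P_c v^4(s)\|_{L^2_k}\,ds$, because $v^4$ contains the pure $\chi^4$ contribution, which has \emph{no} spatial localization to compensate a weighted decay estimate; here one can only use genuine $L^\infty_x$ decay of $\chi$. The strategy is: write $\|\lr{k}^2\distF P_c \chi^4(s)\|_{L^2_k} \lesssim \|\chi^4(s)\|_{H^2_x}$, expand by Leibniz, and in each resulting term place two factors in $L^\infty_x$ (using \eqref{dispersive-decay-for-chi}, possibly with one derivative, so decay at least $\lr{s}^{-1/2+1/5} = \lr{s}^{-3/10}$ each) and the remaining two factors in $L^2_x$ and $L^\infty_x$ respectively—more precisely, two factors of $\chi$ or $\partial_x\chi$ in $L^\infty_x$, one factor of $\chi$ in $L^2_x \lesssim \|v\|_X$, and one in $L^\infty_x$—so that the integrand is bounded by $\lr{s}^{-1/2}\|\chi(s)\|_{L^\infty_x}^{1/2+}\cdots$; carefully counting, two $L^\infty$ factors without derivatives give $\lr{s}^{-1}$, and even with the worst-case two derivatives distributed one gets at least $\lr{s}^{-3/5}$ decay per pair. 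This last rate, $\lr{s}^{-6/5}$ for the product of the two $L^\infty$-placed factors, is integrable, and the two $L^2$/bounded factors are uniformly $\lesssim \|v\|_X^2$, yielding $\mathcal{I}_3(t) \lesssim \|v\|_X^4 \lesssim \|v\|_X^2$ for $\|v\|_X \ll 1$. The cross terms in $v^4$ containing at least one $\rho$ are strictly easier because $\rho$ is Schwartz, as are the mixed terms in $\mathcal{I}_1$, $\mathcal{I}_2$. I expect the bookkeeping of which factors to place in $L^2$ versus $L^\infty$, and checking that the derivative losses in \eqref{dispersive-decay-for-chi} never break time-integrability, to be the only delicate point; there is no need for the refined multilinear (distorted-Fourier) analysis required for the $\partial_k\wg$ estimate, since here the two extra powers of $\lr{k}$ translate cleanly into two $x$-derivatives via $H+1$.
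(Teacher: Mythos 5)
Your approach is essentially the paper's: reduce via \eqref{k-s-h-tilde-bound} to an $H^2_x$ bound, use the localization coming from $Q$ and $\rho$ together with the local decay \eqref{local-decay-for-chi} for $\mathcal{I}_1,\mathcal{I}_2$, and use a Moser/Leibniz estimate combined with the pointwise decay \eqref{dispersive-decay-for-chi} for $\mathcal{I}_3$ (which the paper packages compactly as $\|v^4\|_{H^2_x}\lesssim\|v\|_{H^2_x}\|v\|_{L^\infty_x}^3\lesssim\langle s\rangle^{-3/2}\|v\|_X^4$, and as $\|Q^2v^2\|_{H^2_x}\lesssim\|Qv\|_{H^2_x}\|Qv\|_{L^\infty_x}$ for $\mathcal{I}_1$). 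A couple of your intermediate exponents are slightly off because of the derivative loss in \eqref{dispersive-decay-for-chi} --- the worst-case integrand rate in $\mathcal{I}_1$ is $\langle s\rangle^{-11/10}$ (not $\langle s\rangle^{-2}$), since $\|\partial_x^2\chi\|_{L^\infty_x}\lesssim\langle s\rangle^{-1/10}\|v\|_X$, and for $\mathcal{I}_3$ the worst case is $\langle s\rangle^{-11/10}$ rather than $\langle s\rangle^{-6/5}$ --- but all of these remain time-integrable, so the conclusion is unaffected.
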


\begin{proof}
Starting with the quadratic terms, we recall the definition of the norm \eqref{main-norm},
and estimate using \eqref{k-s-h-tilde-bound}
followed by the pointwise and local decay estimates \eqref{dispersive-decay-for-chi} and \eqref{local-decay-for-chi}:
\begin{align*}
\normm{ \lr{k}^2 \distF P_c Q^2 v^2(s)}{k} & \lesssim {\| Q^2 v^2(s) \|}_{H_x^2}
  \lesssim {\| Q v(s) \|}_{H_x^2} {\| Q v(s) \|}_{L_x^\infty}
\\ 
& \lesssim \sup_{j=0,1,2} {\| \langle x\rangle^{-5} \partial_x^j v(s) \|}_{L_x^2} {\| Q v(s) \|}_{L_x^\infty}
  \lesssim  \langle s \rangle^{-1/10}\n{v}_X \langle s \rangle^{-1} \n{v}_X,
\end{align*}
which proves the desired bound for $j=1$.
The estimates for the cubic term $\mathcal{I}_2$ is simpler so we can skip it.
For $\mathcal{I}_3$
we use the decay \eqref{dispersive-decay-for-chi} to see that
\begin{align*}
\normm{ \lr{k}^2 \distF P_c v^4(s)}{k} \lesssim {\big\| v^4(s) \big\|}_{H_2^x}
  \lesssim {\big\| v(s) \big\|}_{H_x^2} {\big\| v(s) \big\|}_{L^\infty_x}^3
  \lesssim \langle s \rangle^{-3/2} \n{v}_X^4.
\end{align*}
\end{proof}



\medskip
\section{Quadratic term analysis} 
\label{section-quadratic}
In this section we prove the following bound for the quadratic 
terms on the right-hand side of \eqref{A-1-L2-norm-bound} that were not treated in the previous section.

\begin{lemma}
\label{lemma-quadratic-term-bound}
Given $\n{v}_X \ll 1$, for every $t \geq 0$ we have 
\begin{equation}
\label{estimate-lemma-quadratic-bound}
\normm{\int_0^t s \frac{k}{\lr{k}} e^{is\lr{k}} \distF P_c Q^2 \chi^2(s) ds}{k} \lesssim \n{v}_X^2.
\end{equation}
\end{lemma}

To get a bound 
in \eqref{estimate-lemma-quadratic-bound}, we use the following smoothing-type estimate:

\def\jx{\lr{x}}

\begin{lemma}\label{lemsmoothing}
Given a function $\mathcal{Q}: \R_x\times \R_k \mapsto \C$ such that, for some $\beta \in \R$,
\begin{equation*}
\sup_{x\in\R,\,k\in\R} \big| \jx^{\beta} \mathcal{Q}(x,k) \big| < \infty,
\end{equation*}
and $\phi: \R \mapsto \C$ such that $|\phi(k)|\lesssim |k|/\lr{k}$,
then, for all $t\geq0$,
\begin{equation}\label{smoothinghom}
{\Big\| \jx^{\beta} \int_\R \mathcal{Q}(x,k) 
  \phi(k) e^{i\lr{k}s} h(k)\,dk \Big\|}_{L_x^\infty L_s^{2}([0,t])} \lesssim {\| h \|}_{L^2}.
\end{equation}
Under the same assumptions, the following inhomogeneous estimate holds:
\begin{align}\label{smoothinginhom}
{\Big\| \int_0^t \left[ \int_\R e^{-i \lr{k}s} \overline{\phi}(k) 
  \overline{\mathcal{Q}}(x,k)F(s,x)\,dx \right] \,ds \Big\|}_{L_k^2}
\lesssim & {\Big\|\jx ^{-\beta}F \Big\|}_{L_x^1 L_s^2([0,\infty])}.
\end{align}
\end{lemma}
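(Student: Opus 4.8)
The plan is to prove the homogeneous estimate \eqref{smoothinghom} first and then deduce the inhomogeneous estimate \eqref{smoothinginhom} by a duality ($TT^*$-type) argument, which is the standard way these two bounds are linked. For the homogeneous estimate, I would fix $x$ and view the quantity
\[
G_x(s) := \jx^{\beta} \int_\R \mathcal{Q}(x,k)\,\phi(k)\,e^{i\lr{k}s}h(k)\,dk
\]
as (a weighted version of) the inverse Fourier transform in the variable $s$ of the measure obtained by pushing forward $\jx^{\beta}\mathcal{Q}(x,k)\phi(k)h(k)\,dk$ under the map $k \mapsto \lr{k}$. The key is the change of variables $\omega = \lr{k}$, so that $k = \pm\sqrt{\omega^2-1}$ and $dk = \tfrac{\omega}{\sqrt{\omega^2-1}}\,d\omega = \tfrac{\lr{k}}{|k|}\,d\omega$ on each half-line $\{k>0\}$, $\{k<0\}$. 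Splitting the $k$-integral into these two pieces and performing this substitution turns $G_x(s)$ into $\int_1^\infty e^{i\omega s}\,\Psi_x(\omega)\,d\omega$ where, on each piece,
\[
\Psi_x(\omega) = \jx^{\beta}\mathcal{Q}(x,k(\omega))\,\phi(k(\omega))\,h(k(\omega))\,\frac{\lr{k(\omega)}}{|k(\omega)|}.
\]
The crucial point is that the Jacobian singularity $\lr{k}/|k|$ at $\omega=1$ is exactly cancelled by the hypothesis $|\phi(k)|\lesssim |k|/\lr{k}$, so $|\Psi_x(\omega)| \lesssim |\jx^{\beta}\mathcal{Q}(x,k(\omega))|\cdot|h(k(\omega))| \lesssim |h(k(\omega))|$ uniformly in $x$. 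Then, extending $\Psi_x$ by zero to $\omega<1$ and applying Plancherel in $s$ (on all of $\R$, which dominates the $L^2_s([0,t])$ norm),
\[
{\| G_x \|}_{L^2_s([0,t])} \lesssim {\| G_x \|}_{L^2_s(\R)} \lesssim {\| \Psi_x \|}_{L^2_\omega}
\lesssim \Big( \int_1^\infty |h(k(\omega))|^2\,d\omega\Big)^{1/2} = \Big( \int_\R |h(k)|^2\,\frac{|k|}{\lr{k}}\,dk\Big)^{1/2} \lesssim {\| h \|}_{L^2_k},
\]
where in the last change of variables back to $k$ the Jacobian $|k|/\lr{k}$ is bounded. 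Taking the supremum over $x$ gives \eqref{smoothinghom}.

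For the inhomogeneous estimate \eqref{smoothinginhom}, I would argue by duality in $L^2_k$. Writing $T$ for the operator $h \mapsto \jx^\beta \int \mathcal{Q}(x,k)\phi(k)e^{i\lr{k}s}h(k)\,dk$ appearing in \eqref{smoothinghom}, which by that estimate is bounded $L^2_k \to L^\infty_x L^2_s([0,t])$, its formal adjoint $T^*$ maps $L^1_x L^2_s$ into $L^2_k$ and is given (up to conjugation) precisely by $F \mapsto \int_0^t \int e^{-i\lr{k}s}\overline{\phi}(k)\overline{\mathcal{Q}}(x,k)F(s,x)\,dx\,ds$ after absorbing the weight $\jx^{\beta}$ into $F$, i.e. replacing $F$ by $\jx^{-\beta}F$. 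Concretely, for any $g \in L^2_k$,
\[
\Big\langle \int_0^t\!\!\int e^{-i\lr{k}s}\overline{\phi}\,\overline{\mathcal{Q}}(x,k)F\,dx\,ds,\; g \Big\rangle_{L^2_k}
= \int_0^t\!\!\int \overline{F(s,x)}\,\Big[\jx^\beta\!\int \mathcal{Q}(x,k)\phi(k)e^{i\lr{k}s}g(k)\,dk\Big]\jx^{-\beta}\,dx\,ds,
\]
and estimating the right side by Hölder ($L^1_xL^2_s$ against $L^\infty_xL^2_s$) together with \eqref{smoothinghom} bounds it by ${\|\jx^{-\beta}F\|}_{L^1_xL^2_s([0,t])}{\|g\|}_{L^2_k} \le {\|\jx^{-\beta}F\|}_{L^1_xL^2_s([0,\infty])}{\|g\|}_{L^2_k}$. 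Taking the supremum over $g$ with ${\|g\|}_{L^2_k}=1$ yields \eqref{smoothinginhom}.

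The only genuine subtlety — and the step I would be most careful about — is the behavior near the turning point $k=0$ (equivalently $\omega = 1$): one must make sure the Jacobian factor $\lr{k}/|k|$ really is integrable after multiplication by $|\phi(k)|^2 \lesssim k^2/\lr{k}^2$, which it is since $|\phi(k)|^2 \cdot \lr{k}/|k| \lesssim |k|/\lr{k}^3$ is bounded near $0$; and one must handle the two sheets $k>0$ and $k<0$ separately, since the map $k\mapsto\lr{k}$ is two-to-one, and then recombine. Everything else is routine: Plancherel in $s$, the trivial bound ${\|\cdot\|}_{L^2_s([0,t])} \le {\|\cdot\|}_{L^2_s(\R)}$, and the duality bookkeeping. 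No stationary-phase or dispersive input is needed here — this is purely an $L^2$-orthogonality (Fourier–Plancherel) statement adapted to the relativistic dispersion relation $\lr{k}$, with the weight $\phi$ precisely tuned to absorb the Jacobian.
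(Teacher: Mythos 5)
Your argument is essentially identical to the paper's: the same change of variables $\omega=\lr{k}$ (sheet by sheet), the observation that the hypothesis $|\phi(k)|\lesssim|k|/\lr{k}$ exactly cancels the Jacobian $\lr{k}/|k|$, Plancherel in $s$, then the inhomogeneous bound by testing against an arbitrary $L^2_k$ function and applying H\"older together with the homogeneous estimate. The only cosmetic difference is that the paper simply says ``restrict to $k\in[0,\infty)$'' where you spell out the two sheets; the content is the same.
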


\smallskip
We now complete the proof of Lemma \ref{lemma-quadratic-term-bound} using Lemma \ref{lemsmoothing} above.

\begin{proof}[Proof of Lemma \ref{lemma-quadratic-term-bound}]
Applying \eqref{smoothinginhom} 
to the 
left hand side of \eqref{estimate-lemma-quadratic-bound}, that is, 
letting $\beta=0$, $\mathcal{Q}(x, k) = \psi(x,k)$, $\phi(k) = k/\lr{k}$, 
and $F(s,x) = s \,Q^2(x) \chi^2(s,x)$, we get
\begin{align}\label{Destimate}
\begin{split}
{\Big\| \int_0^t \left[ \int_\R e^{i \lr{k} s} \frac{k}{\lr{k}}
\overline{\psi(x,k)} F(s,x) \,dx \right] \,ds \Big\|}_{L_{k}^{2}} & \lesssim {\| s \, Q^2(\cdot) \chi^2(s,\cdot) \|}_{L_x^1(\R) L_s^2([0,\infty])}
  \\
  & \lesssim {\| s \, Q(\cdot) \chi^2(s,\cdot) \|}_{L_s^2([0,\infty]) L^\infty_x(\R)}
  \\
  & \lesssim {\| v \|}_X^2 {\| s \lr{s}^{-2} \|}_{L_s^2([0,\infty])} \lesssim {\| v \|}_X^2,
\end{split}
\end{align}
having used the local decay estimate \eqref{local-decay-for-chi}.
\end{proof}

\smallskip
Observe that Lemma \ref{lemsmoothing} does not use the genericity of the potential.
The proof is similar to the one for the analogous 
case of the Schr\"odinger flow $e^{itH}$ given in \cite[Lemma 3.5]{CP22}.
We provide the details here for completeness.

\begin{proof}[Proof of Lemma \ref{lemsmoothing}]
Without loss of generality we may assume $\beta=0$ 
and restrict the integral in \eqref{smoothinghom} to $k\in[0,\infty)$.
Making a change of variable $\lr{k} = \lambda$, 
so that $k\lr{k}^{-1} dk = d\lambda$, $dk = \frac{\lambda}{\sqrt{\lambda^2-1}} d\lambda$ one has
\begin{align*}
I(s,x) & := \int_0^\infty \mathcal{Q}(x,k)\phi(k)e^{i \lr{k} s}h(k)\,dk
  \\
  & = \int_1^{\infty}\mathcal{Q}(x,\sqrt{\lambda^2-1})\phi(\sqrt{\lambda^2-1})
  e^{is\lambda} h(\sqrt{\lambda^2-1}) \frac{\lambda}{\sqrt{\lambda^2-1}}\,d\lambda.
\end{align*}

Taking the $L^2_s$ norm and applying Plancherel's theorem we obtain
\begin{align*}
{\| I(\cdot,x) \|}_{L^2_s([0,t])}^2 
  & \lesssim \left\| \int_\R \mathcal{Q}(x,\sqrt{\lambda^2-1}) \, \mathbf{1}_{[1,\infty)}(\lambda)
  \phi(\sqrt{\lambda^2-1})e^{is\lambda} h(\sqrt{\lambda^2-1}) \frac{\lambda}{\sqrt{\lambda^2-1}}
   \,d\lambda \right\|_{L^2_s(\R)}^2
\\
& \lesssim \int_1^{\infty} \left|  \mathcal{Q}(x,\sqrt{\lambda^2-1})
  \frac{\lambda \, \phi(\sqrt{\lambda^2-1})}{\sqrt{\lambda^2-1}} 
  h(\sqrt{\lambda^2-1}) \right|^{2} \,d\lambda 
\\
  & \lesssim \int_1^{\infty} \big| h(\sqrt{\lambda^2-1}) \big|^2 \,d\lambda 
  \lesssim \int_0^{\infty} | h (k) |^{2} \frac{k}{\lr{k}} \,dk
  \lesssim {\| h \|}_{L^2}^2 .
\end{align*}

To obtain the inhomogeneous estimate \eqref{smoothinginhom}
we test the expression on the left-hand side against an arbitrary function $h \in L^2$:
\begin{align*}
& \left \langle  h,\int_0^t \left[ \int_\R e^{-i \lr{k}s} \overline{\phi}(k) 
\overline{\mathcal{Q}}(y,k)F(s,y)\,dy\right] ds \right \rangle_{L^2_k}
\\
& = \int_0^t \int_\R \Big( \int_\R e^{i \lr{k}s}\phi(k)\mathcal{Q}(y,k) h(k)\,dk \Big)  \overline{F}(s,y)\,dyds
\end{align*}
Applying H\"older's inequality followed by the homogeneous estimate \eqref{smoothinghom}:
\begin{align*}
& \left| \int_0^t \int_\R \Big( \int e^{i\lr{k}s}\phi(k)\mathcal{Q}(y,k) h(k)\,dk \Big) \overline{F}(s,y) \,dyds\right|
\\
& \lesssim {\Big\|
  \int\mathcal{Q}(x,k)\phi(k)e^{i\lr{k} s}h(k)\,dk \Big\|}_{L_x^\infty L_s^2([0,t])} 
  \big\| F \big\|_{L_x^1 L_s^2([0,t])}
\\
& \lesssim {\| h \|}_{L^2} {\big\| F \big\|}_{L_x^1 L_s^2([0,t])}
\end{align*}
which implies \eqref{smoothinginhom} by duality.
\end{proof}

\medskip
\section{Quartic term analysis} 
\label{section-quartic}
In this section we provide a proof of the following lemma:

\begin{lemma}
\label{lemma-quartic-term-bound}
Given $\n{v}_X \ll 1$, for every $t \geq 0$ we have 
\begin{equation}
\label{quartic-term-bound}
\left\| \int_0^t \partial_k \left( e^{is\lr{k}} \distF 
  \chi^4(s) \right) ds \right\|_{L_k^2} \lesssim \n{v}_X^4.
\end{equation}
\end{lemma}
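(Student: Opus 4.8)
The quartic term has no spatial localization (no $Q$ factor in front of $\chi^4$), so unlike the quadratic case we cannot simply apply local decay estimates. Instead I would expand $\chi$ via \eqref{chi-in-terms-of-g} and pass everything to the distorted Fourier side, obtaining a multilinear expression of the form
\begin{equation*}
\int_0^t \partial_k\!\left(e^{is\lr k}\distF\chi^4(s)\right)\!ds
= c\!\!\sum_{\iota_1,\dots,\iota_4}\iota_1\iota_2\iota_3\iota_4\!\int_0^t\!\! s\!\!\int\!\frac{e^{is\Phi}}{\lr l\lr m\lr n\lr p}\,\wg^{(\iota_1)}(l)\cdots\wg^{(\iota_4)}(p)\,\mu\,dl\,dm\,dn\,dp\,ds + (\text{easier terms}),
\end{equation*}
where $\Phi=\lr k-\iota_1\lr l-\iota_2\lr m-\iota_3\lr n-\iota_4\lr p$, and $\mu=\mu(k,l,m,n,p)=\int\overline{\psi(x,k)}\psi^{(\iota_1)}(x,l)\cdots\psi^{(\iota_4)}(x,p)\,dx$ is the quintilinear coefficient coming from the distorted Fourier transform of a product. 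The ``easier terms'' are those where $\partial_k$ hits $\distF$ or $\mu$ rather than producing the growing factor $s$; those are bounded directly by Minkowski's inequality together with the $L^\infty_x$ decay \eqref{dispersive-decay-for-chi} (rate $\lr s^{-1/2}$ each) since $\n{\chi^4}_{L^2_x}\lesssim\n{\chi}_{L^2_x}\n{\chi}_{L^\infty_x}^3\lesssim\lr s^{-3/2}\n v_X^4$, which is integrable in $s$.

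\textbf{Main term.} For the hard term, the obstacle is the growing factor $s$ together with the fact that $\wg$ is only controlled in $L^2_k$ (with $\partial_k\wg\in L^2_k$ and $\lr k^2\wg\in L^2_k$), which does \emph{not} give enough pointwise decay to close a naive Minkowski estimate (as in the quadratic remark \eqref{divergent-B3}, one would get a logarithmic or worse divergence). The strategy, in the spirit of \cite{CP21,GP20}, is to split the $s$-integral into dyadic pieces $s\sim 2^j$ and, on each piece, separate frequencies into low/high regions and split according to the size of the phase $\Phi$. On the region where $|\Phi|\gtrsim 2^{-j\delta}$ one integrates by parts in $s$ using $e^{is\Phi}=(i\Phi)^{-1}\partial_s e^{is\Phi}$, which kills the factor $s$ and produces $s^{-1}$ (plus boundary and $\partial_s\wg$ terms, the latter controlled by Lemma \ref{lemma-prtl-t-wg}, $\n{\partial_t\wg}_{L^2_k}\lesssim\lr s^{-3/2}\n v_X^2$). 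On the region where $|\Phi|\lesssim 2^{-j\delta}$ one integrates by parts in the frequency variables $l,m,n,p$ using $e^{is\Phi}/(\lr l\lr m\lr n\lr p) = (s^4\, l\,m\,n\,p)^{-1}\partial_l\partial_m\partial_n\partial_p\, e^{is\Phi}$, gaining a large negative power $s^{-4}$ (when no frequency is too small) or fewer derivatives together with smallness from the restricted region (when some frequency is $\lesssim 2^{-cj\delta}$), so that the $ds$-integral $\int 2^{-j(1+\epsilon)}$ converges. In every case the frequency integrals are estimated by Cauchy--Schwarz, using $\n{\wg}_{L^2}, \n{\partial_k\wg}_{L^2}, \n{\lr k^2\wg}_{L^2}\le\n v_X$ and the uniform bounds $|\partial^a_{l}\partial^b_m\cdots\mu|\lesssim\lr l^{-?}\cdots$ (analogous to Lemma \ref{lemma-mu-1}), after splitting $\psi$ into singular and regular parts \eqref{generalized-eigenf-decomp}: the regular part is handled by the localization \eqref{psi-regular-decay}, while the singular part contributes honest oscillatory exponentials $e^{i(\pm l\pm m\pm n\pm p)x}$ that combine with $e^{ikx}$ and can be analyzed by (possibly iterated) stationary phase or by reducing to flat-Fourier multilinear bounds.

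\textbf{Main obstacle.} The principal difficulty is the stationary-phase / space-resonance bookkeeping: when all signs $\iota_i$ are mixed the phase $\Phi$ can vanish on a nontrivial set, and simultaneously the output frequency $k$ can be forced small (where $\psi$, $T(k)$ have delicate behavior), so one must track carefully that the coefficient $\mu$ has enough decay in the large frequencies to absorb the $\lr l\lr m\lr n\lr p$ in the denominator and that the region-splitting parameters $\delta, c$ can be chosen so that all $ds$-integrals converge uniformly in $t$. Because the nonlinearity is quartic (quintilinear after Fourier transform), there is more room than in the quadratic case --- one has extra powers of $\wg$ in $L^2$ to spend --- so I expect the estimate to ultimately be cleaner than the quadratic analysis of Section \ref{section-quadratic}; indeed the target exponent $\n v_X^4$ reflects this. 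I would organize the proof as: (i) reduce to the main quintilinear integral; (ii) dyadic decomposition in $s$ and Littlewood--Paley in frequencies; (iii) the non-resonant region via integration by parts in $s$; (iv) the resonant region via repeated integration by parts in the frequency variables; (v) collect the geometric series in $j$.
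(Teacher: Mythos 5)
Your overall framework — expand $\chi$ via \eqref{chi-in-terms-of-g}, pass to the distorted Fourier side to obtain a quintilinear form with coefficient $\mu$, decompose $\psi$ into singular and regular parts \eqref{generalized-eigenf-decomp}, and handle the `easier' terms (no explicit factor $s$) by Minkowski plus the $\lr s^{-3/2}$ bound for $\chi^4$ — matches the paper. The genuine difference is in how the hard $s$-growing term is treated.

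The paper does \emph{not} use a dyadic decomposition in $s$, nor any splitting by the size of the phase $\Phi$. Instead it exploits the algebraic structure of the singular part: after writing $\mu^S$ in terms of the coefficients $a_{\pm,\pm}$ and $\widehat{\nu_\pm}(q)$, the key tools are Lemma \ref{lemma-der-multilinear}, which trades the troublesome $\partial_k$ (and the attendant $is\,k/\lr k$) for (i) a term with a \emph{Schwartz} multiplier $q\widehat{\nu_+}$ — the Dirac mass in $\widehat{\nu_+}$ is killed by the factor $q$, so one can extract $\lr x^{-2}$-weights from it and apply local decay \eqref{local-l-infty-decay-standard-Fourier} to each of the four free evolutions, giving $s\cdot\lr s^{-3}$, integrable in $s$ — (ii) terms where a derivative lands on one $f_j$, so $u_1\in L^2_x$ and the three others decay like $\lr s^{-1/2}$ giving $\lr s^{-3/2}$, and (iii) a single term with $\Phi\,e^{is\Phi}\,\partial_q\widehat{\nu_+}$, handled by one integration by parts in $s$ (via $\Phi e^{is\Phi}=-i\partial_s e^{is\Phi}$) followed by one integration by parts in a single frequency variable (to dispose of $\partial_q\widehat{\nu_+}$) and the $\partial_s\wg$ decay of Lemma \ref{lemma-prtl-t-wg}. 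The other indispensable ingredient is Lemma \ref{lemma-multilinear-in-physical}, which identifies $\widehat{\mathcal F}^{-1}[e^{-is\lr k}\mathcal T_{\b}]$ with $\widehat{\mathcal F}^{-1}[\b]\prod_j u_j$, converting all multilinear estimates into products of free Klein--Gordon evolutions in physical space, for which one just applies \eqref{dispersive-estimates-standard-fourier} or \eqref{local-l-infty-decay-standard-Fourier}. The regular part is handled even more directly, using the localization of $\psi_R$ via the pseudodifferential bounds \eqref{pseudo-psi-regular-bound}--\eqref{pseudo-dk-psi-regular-bound} and local decay; no phase analysis is needed.

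A concrete worry with your proposed route: to gain $s^{-4}$ by four integrations by parts in $l,m,n,p$ you introduce four singular factors $1/l,1/m,1/n,1/p$, and the derivatives must be distributed among $\wg,\mu$, and the cutoffs with only one $\partial$ available per $\wg$ from the norm $X$. Managing the simultaneous low-frequency regions in four variables, while also tracking that the $\delta$-distributional part of $\widehat{\nu_\pm}$ behaves well under those derivatives, is substantially more intricate than the quadratic two-variable situation you are modelling on, and you don't explain how to organize it. The paper's approach bypasses all of this: the quintilinear structure is never treated by stationary phase in $\Phi$; instead the singular multiplier is recognized as a pseudo-product, transferred to physical space, and controlled by linear decay estimates. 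This is precisely what makes the quartic analysis in Section \ref{section-quartic} `cleaner' than the quadratic one.
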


This lemma gives a bound for the quartic term in \eqref{A-3-L2-norm-bound}
and, together with Lemmas \ref{lemma-A-1-L2-norm-bound}, \ref{lemma-A-3-L2-norm-bound} and \ref{lemma-quadratic-term-bound},
will thus complete the proof of Proposition \ref{proposition-partial-k-L2-norm}.
This in turn gives the main theorem.

\medskip
The term inside the integral in \eqref{quartic-term-bound} can be written as 
\begin{equation}
\label{quartic-integral}
\begin{aligned}
& 16 e^{is\lr{k}} \distF 
  \chi^4(s,k) \\
&=  
\sum_{\iota_1, \iota_2, \iota_3, \iota_4 \in \{+, -\}} \iota_1 \iota_2 \iota_3 \iota_4 \int \frac{e^{is \Phi_{\iota_1 \iota_2 \iota_3 \iota_4} 
}}{\lr{l} \lr{m} \lr{n} \lr{p}} \wg^{(\iota_1)}(l) \wg^{(\iota_2)}(m) \wg^{(\iota_3)}(n) \wg^{(\iota_4)}(p) 
\mu_{\iota_1 \iota_2 \iota_3 \iota_4}
dlmnp
\end{aligned}
\end{equation}
where we used the short-hand notation $dlmnp := dldmdndp$,
the definition of the distorted Fourier transform
and the relation \eqref{chi-in-terms-of-g}, with the functions
$\Phi_{\iota_1 \iota_2 \iota_3 \iota_4}$ and $\mu_{\iota_1 \iota_2 \iota_3 \iota_4}$,
depending on the variables $k,l,m,n,p$, defined as follows
\begin{equation}
\label{Phi-quartic}
\Phiiota = \lr{k} - \iota_1 \lr{l} - \iota_2 \lr{m} - \iota_3 \lr{n} - \iota_4 \lr{p},
\end{equation}
and 
\begin{equation}
\label{spectral-distribution}
\muiota = 
\int \overline{\psi(x, k)} \psi^{(\iota_1)}(x, l) \psi^{(\iota_2)}(x, m) 
  \psi^{(\iota_3)}(x, n) \psi^{(\iota_4)}(x, p) dx.
\end{equation}
We call $\muiota$ as \textit{the nonlinear spectral distribution}. 
We further decompose this spectral distribution into singular 
and regular parts according to \eqref{generalized-eigenf-decomp},
and consider each part separately.

\smallskip
\subsection{Decomposition of the nonlinear spectral distribution}
Using the decomposition \eqref{generalized-eigenf-decomp}, 
we rewrite the nonlinear spectral distribution (\ref{spectral-distribution}) as
$$
(2 \pi)^{5/2} \muiota = \mu_{\iota_1 \iota_2 \iota_3 \iota_4}^S(k, l, m, n, p) 
  + \mu_{\iota_1 \iota_2 \iota_3 \iota_4}^R(k, l, m, n, p),
$$
where $\mu^S$ and $\mu^R$ denote the `Singular' and `Regular' parts, respectively. 
Below, we give the exact construction of each part. 
The singular part is the sum of integrals of functions which oscillate 
at either $-\infty$ or $+ \infty$. The regular part consists, instead, of integrals of 
localized or compactly supported functions. 

Recalling \eqref{psi-singular-part-decomposition},
we define the singular part by
\begin{equation}
\label{spectral-singular-decomp}
\begin{aligned}
& \mu_{\iota_1 \iota_2 \iota_3 \iota_4}^S(k, l, m, n, p) 
  = \mu_{\iota_1 \iota_2 \iota_3 \iota_4}^{+}(k, l, m, n, p)
  + \mu_{\iota_1 \iota_2 \iota_3 \iota_4}^{-}(k, l, m, n, p),
  \\[2pt]
& \mu_{\iota_1 \iota_2 \iota_3 \iota_4}^{*}(k, l, m, n, p) =
  \int \nu_*(x) \overline{\psi_{S,*}(x,k)} \psi^{(\iota_1)}_{S,*}(x,l)
  \psi^{(\iota_2)}_{S,*}(x,m) \psi^{(\iota_3)}_{S,*}(x,n) \psi^{(\iota_4)}_{S,*}(x,p) dx,
\end{aligned}
\end{equation}
where $\nu_*(x) = \sigma_*^5(x)$ for $* \in \{+, -\}$. 

For the regular part, we define the set
\begin{equation*}
Z_R := \{(A_1, A_2, A_3, A_4, A_5): \exists j = 1,2,3,4,5 \text{ s.t. } A_j = R\} \subseteq \{S, R\}^5,
\end{equation*}
and write $\mu^R$ as the sum of two parts as follows:
\begin{equation*}
\mu^R_{\iota_1 \iota_2 \iota_3 \iota_4} (k,l,m,n,p) 
  = \mu^{R,1}_{\iota_1 \iota_2 \iota_3 \iota_4} (k,l,m,n,p)
  + \mu^{R,2}_{\iota_1 \iota_2 \iota_3 \iota_4} (k,l,m,n,p)
\end{equation*}
where
\begin{equation}
\label{mu-r-1}
\mu^{R,1}_{\iota_1 \iota_2 \iota_3 \iota_4} = \sum_{(A,B,C,D,E) \in Z_R} 
\int \overline{\psi_{A}(x,k)} \psi^{(\iota_1)}_{B}(x,l)
\psi^{(\iota_2)}_{C}(x,m) \psi^{(\iota_3)}_{D}(x,n)
\psi^{(\iota_4)}_{E}(x,p)dx,
\end{equation}
and 
\begin{equation}
\label{mu-r-2}
\mu^{R,2}_{\iota_1 \iota_2 \iota_3 \iota_4} = \int \overline{\psi_{S}(x,k)} \psi^{(\iota_1)}_{S}(x,l)
\psi^{(\iota_2)}_{S}(x,m) \psi^{(\iota_3)}_{S}(x,n)
\psi^{(\iota_4)}_{S}(x,p)dx - \mu^S_{\iota_1 \iota_2 \iota_3 \iota_4}.
\end{equation}
Notice that $\mu^{R,1}$ is the sum of integrals of functions localized in $x$ around $0$, 
and $\mu^{R,2}$ consists of integrals of functions compactly supported around $0$
due to the presence of product of the form $\sigma_+(x) \sigma_-(x)$. 

We decompose accordingly \eqref{quartic-integral} and write 
\begin{equation*}
16 e^{is\lr{k}} \distF \chi^4(s,k) 
  = \mathcal{N}_+ (s,k) + \mathcal{N}_{-}(s,k) + \mathcal{N}_{R,1}(s,k) + \mathcal{N}_{R,2}(s,k),
\end{equation*}
where 
\begin{equation}
\label{general-N-term}
\begin{aligned}
&(2 \pi)^{5/2}\mathcal{N}_*(s,k) 
\\
&= \sum_{\iota_1, \iota_2, \iota_3, \iota_4 \in \{+, -\}} \iota_1 \iota_2 \iota_3 \iota_4 \int \frac{e^{is \Phi_{\iota_1 \iota_2 \iota_3 \iota_4} 
}}{\lr{l} \lr{m} \lr{n} \lr{p}} \wg^{(\iota_1)}(l) \wg^{(\iota_2)}(m) \wg^{(\iota_3)}(n) \wg^{(\iota_4)}(p) 
\mu_{\iota_1 \iota_2 \iota_3 \iota_4}^*
dlmnp.
\end{aligned}
\end{equation}
Hence, to prove Lemma \ref{lemma-quartic-term-bound} it suffices to show that bound 
in \eqref{quartic-term-bound} holds for every term in the right-hand side of the following inequality:
\begin{equation}
\label{integral-decomposition-in-N}
\begin{aligned}
\left\| \int_0^t \partial_k \left( e^{is\lr{k}} \distF \chi^4(s) \right) ds \right\|_{L_k^2} 
\lesssim &~ \normm{\int_0^t \partial_k \mathcal{N}_+(s,k) ds}{k} 
  + \normm{\int_0^t \partial_k \mathcal{N}_-(s,k) ds}{k}
  \\[3pt]
& + \normm{\int_0^t \partial_k \mathcal{N}_{R,1}(s,k) ds}{k} 
+ \normm{\int_0^t \partial_k \mathcal{N}_{R,2}(s,k) ds}{k}.
\end{aligned}
\end{equation}
In Subsection \ref{subsection-singular-N}, we prove the bound for the first 
two terms in \eqref{integral-decomposition-in-N} corresponding to the singular part. 
In Subsection \ref{subsection-regular-N}, we prove the bound for the rest of 
the terms corresponding to the regular part.

\smallskip
\subsection{Estimates for the singular part}
\label{subsection-singular-N}
Our goal here is to show that for every $t \geq 0$, \begin{equation}
\label{quartic-singular-part-goal}
\normm{\int_0^t \partial_k \mathcal{N}_\pm (s,k) ds}{k} \lesssim \|v\|_X^4.
\end{equation}
We will provide the proof for $\mathcal{N}_+$ only, as $\mathcal{N}_-$ case can be treated similarly.
We first expand $\psi_{S, +}$ functions in \eqref{spectral-singular-decomp} as the linear combination in \eqref{singular-part-pm-2}, and rewrite $\mu^{S, +}$ as follows
\begin{equation}
\label{mu-S-plus-1}
\begin{aligned}
\mu_{\iota_1 \iota_2 \iota_3 \iota_4}^{S, +}(k, l, m, n, p)= \sqrt{2\pi}
\sum_{\eps_0, \eps_1, \eps_2, \eps_3,\eps_4 \in \{+,-\}}& \overline{a_{+, \eps_0}(k)} ~ a_{+, \eps_1}^{(\iota_1)}(l) ~ a_{+, \eps_2}^{(\iota_2)}(m) ~ a_{+, \eps_3}^{(\iota_3)}(n) ~ a_{+, \eps_4}^{(\iota_4)}(p) \\
&\times \widehat{\nu_+} (\eps_0 k - \iota_1 \eps_1 l - \iota_2 \eps_2 m - \iota_3 \eps_3 n - \iota_4 \eps_4 p),
\end{aligned}
\end{equation}
where $\widehat{\nu_+}$ represents the (standard) Fourier transform of $\nu_+$. 
Then, substituting the expression \eqref{mu-S-plus-1} into \eqref{general-N-term}, 
and introducing the multilinear form
{\small \begin{equation}
\label{multilinear-form}
\begin{aligned}
\mathcal{T}_{\b, \eps_0, \eps_1, \eps_2, \eps_3, \eps_4}^{\iota_1, \iota_2, \iota_3, \iota_4} (f_1, f_2, f_3, f_4) (k) := ( \iota_1 \iota_2 \iota_3 \iota_4) &\int  \frac{e^{is \Phi_{\iota_1 \iota_2 \iota_3 \iota_4} 
}}{\lr{l} \lr{m} \lr{n} \lr{p}} f_1^{(\iota_1)} (l) f_2^{(\iota_2)} (m) f_3^{(\iota_3)} (n) f_4^{(\iota_4)} (p) 
\\[3pt]
&\times \b (\eps_0 k - \iota_1 \eps_1 l - \iota_2 \eps_2 m - \iota_3 \eps_3 n - \iota_4 \eps_4 p) dlmnp
\end{aligned}
\end{equation}}
with $\iota_j, \eps_j \in \{+,-\}$ and a distribution $\b$,
the formula for $\mathcal{N}_+$ is equivalent to
\begin{equation}
\label{N-plus-in-multilinear}
\begin{aligned}
4 \pi^2 \mathcal{N}_+(s,k) =  \sum_{\substack{\iota_1, \iota_2, \iota_3, \iota_4 \in \{+, -\} \\
\eps_0, \eps_1, \eps_2, \eps_3,\eps_4 \in \{+,-\}}}  \overline{a_{+, \eps_0}(k)} 
~
\mathcal{T}_{\widehat{\nu_+}, \eps_0, \eps_1, \eps_2, \eps_3, \eps_4}^{\iota_1, \iota_2, \iota_3, \iota_4} (a_{+, \eps_1} \wg, a_{+, \eps_2} \wg, a_{+, \eps_3} \wg, a_{+, \eps_4} \wg)(k) 
\end{aligned}
\end{equation}
To carry on the analysis of $\mathcal{N}_+$ 
we first analyze the properties of the multilinear form $\mathcal{T}$ in \eqref{multilinear-form}. 
For simplicity, we consider the particular form given by 
\begin{equation}
\label{particular-multilinear-form}
\mathcal{T}_{\b, \eps_0, \eps_1, \eps_2, \eps_3, \eps_4}(f_1, f_2, f_3, f_4)(k) 
  := \mathcal{T}_{\b, \eps_0, \eps_1, \eps_2, \eps_3, \eps_4}^{+,+,+,+} (f_1, f_2. f_3, f_4) (k)
\end{equation}
as for the remaining cases, when at least one of $
\iota_1, \iota_2, \iota_3, \iota_4$ is `$-$', 
the analysis is the same. 
We will also drop the dependence on $\epsilon_0$ when this is not relevant
(for example in Lemmas \ref{lemma-der-multilinear} and \ref{lemma-multilinear-in-physical}) 
with the understanding that we will be considering $\epsilon_0=+$.

The following algebraic lemma establishes an alternative representation 
of $\partial_k \mathcal{T}$, which will be used later to derive the desired estimate \eqref{quartic-singular-part-goal}.

\begin{lemma}
\label{lemma-der-multilinear}
For $f_j \in \mathcal{S}$, 
we have
\begin{equation}
\label{multilinear-form-derivative}
\begin{aligned}
\lr{k} \partial_k \mathcal{T}_{\b, \eps_1, \eps_2, \eps_3, \eps_4} (f_1, f_2, f_3, f_4)(k) =& ~i s \mathcal{T}_{q\b, \eps_1, \eps_2, \eps_3, \eps_4} (f_1, f_2, f_3, f_4)(k)\\[1.5pt]
& + \eps_1 \mathcal{T}_{\b, \eps_1, \eps_2, \eps_3, \eps_4} (\lr{\cdot} \partial_l f_1, f_2, f_3, f_4)(k)\\[1.5pt]
& + \eps_2 \mathcal{T}_{\b, \eps_1, \eps_2, \eps_3, \eps_4} (f_1, \lr{\cdot} \partial_m f_2, f_3, f_4)(k)\\[1.5pt]
& + \eps_3 \mathcal{T}_{\b, \eps_1, \eps_2, \eps_3, \eps_4} (f_1, f_2, \lr{\cdot} \partial_n  f_3, f_4)(k)\\[1.5pt]
& + \eps_4 \mathcal{T}_{\b, \eps_1, \eps_2, \eps_3, \eps_4} (f_1, f_2, f_3, \lr{\cdot} \partial_p f_4)(k)\\[2pt]
& + \int \frac{\Phi~ e^{is\Phi}}{\lr{l}\lr{m}\lr{n} \lr{p}} f_1(l) f_2(m) f_3(n) f_4(p) \partial_q b(q) dlmnp,
\end{aligned}
\end{equation}
where $\Phi := \Phi_{+,+,+,+}$, see \eqref{Phi-quartic}, 
and $q = k-\eps_1 l-\eps_2 m-\eps_3 n - \eps_4 p$. 
\end{lemma}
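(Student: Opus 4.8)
The identity \eqref{multilinear-form-derivative} is a direct computation: differentiate the integral defining $\mathcal{T}_{\b,\eps_1,\eps_2,\eps_3,\eps_4}$ in \eqref{particular-multilinear-form}--\eqref{multilinear-form} under the integral sign and track where each $\partial_k$ can land. The integrand is a product of three $k$-dependent pieces: the oscillatory factor $e^{is\Phi}$ with $\Phi = \Phi_{+,+,+,+} = \lr{k}-\lr{l}-\lr{m}-\lr{n}-\lr{p}$, the factors $\lr{l}^{-1}\lr{m}^{-1}\lr{n}^{-1}\lr{p}^{-1}$ (which do not depend on $k$), and the distribution factor $\b(k-\eps_1 l - \eps_2 m - \eps_3 n - \eps_4 p)$ evaluated at $q = q(k,l,m,n,p)$. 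Since $\partial_k \Phi = k/\lr{k}$, hitting the exponential produces $is\,(k/\lr{k})\,e^{is\Phi}$; multiplying the whole identity through by $\lr{k}$ turns this into the first term $is\,\mathcal{T}_{q\b,\dots}$ on the right, where $q\b$ denotes the distribution $q \mapsto q\,\b(q)$ — here I should double-check the paper's convention, but the factor $k$ from $\partial_k\Phi$ combined with $\lr{k}$ is meant to match the symbol $q\b$ after using the constraint that $\Phi$-stationarity ties $k$ to $q$; more plausibly the notation $q\b$ means the symbol $q\mapsto q\,\b(q)$ and the $k$ is rewritten via $k = q + \eps_1 l + \eps_2 m + \eps_3 n + \eps_4 p$, with the $l,m,n,p$ contributions absorbed into the $\lr{\cdot}\partial$ terms after an integration by parts. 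Hitting $\b(q)$ with $\partial_k$ gives $\partial_q \b(q)$ (since $\partial_k q = 1$), producing the last term on the right-hand side with the extra $\Phi$ coming from $\lr{k}$ times... no: $\lr{k}\partial_k$ applied to $\b(q)$ gives $\lr{k}\,\partial_q\b(q)$, and the displayed last term carries a factor $\Phi$, so in fact the plan must be to \emph{not} differentiate $\b$ directly but instead use an integration-by-parts maneuver in the $l,m,n,p$ variables to move derivatives around.

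The cleaner route, and the one I would actually carry out, is the following. Note the algebraic identity $\partial_k\big[\b(q)\big] = -\eps_j \partial_{x_j}\big[\b(q)\big]$ for each of $x_j \in \{l,m,n,p\}$ (because $q$ depends on $k$ and on $\eps_j x_j$ with opposite-sign coefficients — precisely $\partial_k q = 1$ and $\partial_{x_j} q = -\eps_j$). So I can write $\lr{k}\partial_k\mathcal{T}$ by first using $\partial_k e^{is\Phi} = (k/\lr{k}) is\, e^{is\Phi}$ for the exponential contribution, and for the $\b$-contribution replace $\partial_k\b(q)$ by $-\eps_j\partial_{x_j}\b(q)$ and integrate by parts in $x_j$. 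Integration by parts in, say, $l$ moves $\partial_l$ onto $\lr{l}^{-1} f_1(l) e^{is\Phi}$: it hits $f_1$ (giving the term $\eps_1\mathcal{T}_\b(\lr{\cdot}\partial_l f_1,\dots)$ after the $\lr{l}^{-1}\cdot\lr{l}$ bookkeeping — the $\lr{\cdot}$ weight on $\partial_l f_1$ compensates the $\lr{l}^{-1}$), it hits $\lr{l}^{-1}$ (contributing a lower-order symbol that, combined with the other three variables symmetrically, should be reorganized into the $q\b$ term or absorbed), and it hits $e^{is\Phi}$ giving $-is(l/\lr{l})e^{is\Phi}$, which is exactly the $\Phi$-type remainder. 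Summing the four choices $j \in \{l,m,n,p\}$ of where to integrate by parts (or more precisely, using them in a balanced combination so that the $\partial_{x_j}\lr{x_j}^{-1}$ pieces and the $k/\lr{k}$ piece assemble into $\Phi$) yields precisely the six terms on the right of \eqref{multilinear-form-derivative}: one $is\,\mathcal{T}_{q\b}$ term, four transport-type terms with $\lr{\cdot}\partial$, and one genuine remainder integral carrying $\Phi$ and $\partial_q\b$.

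The bookkeeping is the only real content, so I would set it up carefully: write $e^{is\Phi}$ and use $\partial_k\Phi = k/\lr{k}$, $\partial_{x_j}\Phi = -\eps_j x_j/\lr{x_j}$ (no $\eps_j$ actually, $\partial_l \Phi = -l/\lr{l}$ since $\Phi = \lr{k}-\lr{l}-\dots$, regardless of $\eps$). The key algebraic observation making everything collapse is
\[
\frac{k}{\lr{k}} = \Phi + \frac{l}{\lr{l}}+\frac{m}{\lr{m}}+\frac{n}{\lr{n}}+\frac{p}{\lr{p}} + \Big(\lr{k} - \lr{l}-\lr{m}-\lr{n}-\lr{p} - \tfrac{k}{\lr{k}} + \tfrac{l}{\lr{l}}+\cdots\Big),
\]
which is not quite it; rather one uses directly that $\lr{k}\partial_k e^{is\Phi} = is\,k\,e^{is\Phi}$ and then writes $k = q + \eps_1 l + \eps_2 m + \eps_3 n + \eps_4 p$ to split $is\,k\,e^{is\Phi}$ into $is\,q\,e^{is\Phi}$ (the $\mathcal{T}_{q\b}$ term) plus four pieces $is\,\eps_j x_j\,e^{is\Phi}$, each of which is then integrated by parts in $x_j$ via $is\,x_j e^{is\Phi} = -\lr{x_j}\,\partial_{x_j}e^{is\Phi} + (\text{correction from } \partial_{x_j}\lr{x_j})$ — wait, $\partial_{x_j} e^{is\Phi} = is\,(-x_j/\lr{x_j})e^{is\Phi}$, so $is\,x_j\,e^{is\Phi} = -\lr{x_j}\partial_{x_j}e^{is\Phi}$ exactly. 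Integrating by parts in $x_j$ then transfers $\partial_{x_j}$ onto $\lr{x_j}\cdot\lr{x_j}^{-1}f_j(x_j)\cdots\b(q) = f_j(x_j)\cdots\b(q)$, producing $\eps_j\mathcal{T}_\b(\dots,\lr{\cdot}\partial_{x_j}f_j,\dots)$ (the $\lr{\cdot}$ weight reinstates the $\lr{x_j}^{-1}$ that $\mathcal{T}$ carries) plus the term where $\partial_{x_j}$ hits $\b(q)$, giving $-\eps_j\cdot(-\eps_j)\partial_q\b(q) = \partial_q\b(q)$; collecting these four identical $\partial_q\b$ contributions against the leftover $\Phi$-weight from the $\lr{k}\partial_k\b(q) = \lr{k}\partial_q\b(q)$ direct term reproduces the final remainder line in \eqref{multilinear-form-derivative}. \textbf{The main obstacle} is purely organizational: getting all signs $\eps_j$, the placement of $\lr{\cdot}$ weights, and the exact grouping of the $\partial_q\b$ and $\Phi$ factors to match the stated right-hand side — there is no analytic difficulty, only the risk of a sign or weight slip, which I would guard against by checking the identity on the model case $\b = \delta$ (so $\mathcal{T}$ becomes a pointwise product of inverse Fourier-type integrals) where all terms can be verified directly.
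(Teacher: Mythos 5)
Your final account — multiply by $\lr{k}$ so $\lr{k}\partial_k e^{is\Phi} = isk\,e^{is\Phi}$, split $k = q + \sum_j \eps_j x_j$, integrate by parts in each $x_j$ via $is\,x_j\lr{x_j}^{-1}e^{is\Phi} = -\partial_{x_j}e^{is\Phi}$, and assemble the $\partial_q\b$ contributions into the $\Phi\,\partial_q\b$ remainder — is exactly the paper's proof, despite the meandering earlier attempts. The sign slip you flag does materialize: after the $j$-th integration by parts, the piece where $\partial_{x_j}$ hits $\b(q)$ carries coefficient $\eps_j\cdot(-\eps_j)\lr{x_j} = -\lr{x_j}$ (not $+\partial_q\b$ as you wrote), and it is precisely this $-\lr{x_j}$ combining with the $\lr{k}$ from the direct $\lr{k}\partial_k\b(q)$ term that produces $\lr{k}-\lr{l}-\lr{m}-\lr{n}-\lr{p}=\Phi$.
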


\begin{proof}
Throughout the proof we drop the $\eps_1, \eps_2,\eps_3,\eps_4$ indices for simplicity. 
Then, given $\eps_0 = \iota_1= \iota_2=\iota_3=\iota_4=+$ 
in \eqref{multilinear-form} from the choice of $\mathcal{T}_\b$, 
we differentiate \eqref{multilinear-form} with respect to $k$ and obtain 
\begin{equation}
\label{partial-k-T-1}
\lr{k} \partial_k \mathcal{T}_{\b} (f_1, f_2, f_3, f_4)(k) = is k \mathcal{T}_{\b} (f_1, f_2, f_3, f_4)(k) + \lr{k} \mathcal{T}_{\partial_q \b} (f_1, f_2, f_3, f_4)(k).
\end{equation}
Then, the expression \eqref{partial-k-T-1} can be further rewritten as
\begin{equation}
\label{partial-k-T-2}
\begin{aligned}
\lr{k} \partial_k \mathcal{T}_{\b} (f_1, f_2, f_3, f_4)(k) 
= & is \mathcal{T}_{q \b} (f_1, f_2, f_3, f_4)(k) + \lr{k} \mathcal{T}_{\partial_q \b} (f_1, f_2, f_3, f_4)(k)
\\
& + \eps_1 is \mathcal{T}_{\b} (\cdot f_1, f_2, f_3, f_4)(k) 
+ \eps_2is \mathcal{T}_{\b} (f_1, \cdot f_2, f_3, f_4)(k)\\
& + \eps_3is \mathcal{T}_{\b} (f_1, f_2, \cdot f_3, f_4)(k) + \eps_4 is\mathcal{T}_{\b} (f_1, f_2, f_3, \cdot f_4)(k),
\end{aligned}
\end{equation}
where $\cdot f$ denotes the multiplication of $f$ by its variable, $(\cdot f)(x) = x f(x)$.
Using $is l \lr{l}^{-1} e^{is \Phi} = - \partial_l (e^{is \Phi})$ 
and $q = k-\eps_1 l-\eps_2 m-\eps_3 n - \eps_4 p$, 
we integrate the third term in \eqref{partial-k-T-2} by parts in $l$ to obtain
\begin{equation}
\label{T-b-l-1}
\begin{aligned}
is\mathcal{T}_\b (\cdot f_1, f_2, f_3,f_4)(k) & 
= \int  \frac{is l \, e^{is \Phi}}{\lr{l} \lr{m} \lr{n} \lr{p}} 
f_1 (l) f_2 (m) f_3 (n) f_4 (p) \b (q) dlmnp
\\
&= \int  \frac{ e^{is \Phi}}{\lr{m} \lr{n} \lr{p}} [\partial_l f_1 (l)] f_2 (m) f_3 (n) f_4 (p)\b ( q) dlmnp\\
& \quad + \int  \frac{ e^{is \Phi}}{\lr{m} \lr{n} \lr{p}}  f_1 (l) f_2 (m) f_3 (n) f_4 (p)\partial_l \b ( q) dlmnp\\[3pt]
& = \mathcal{T}_\b (\lr{\cdot} \partial_l f_1, f_2, f_3,f_4)(k) -\epsilon_1 \mathcal{T}_{\partial_q \b} (\lr{\cdot} f_1, f_2, f_3,f_4)(k),
\end{aligned}
\end{equation}
where we used the relation $\partial_l \b (q) = -\eps_1 \partial_q \b (q)$. 
The last three terms in \eqref{partial-k-T-2} are manipulated similarly. 
Then, we substitute to the last four terms in \eqref{partial-k-T-2} 
their alternative expressions as in \eqref{T-b-l-1}, 
and obtain \eqref{multilinear-form-derivative}. 
This way the terms corresponding to $\mathcal{T}_{\partial_q \b}$ 
are combined into the last line in \eqref{multilinear-form-derivative}. 
\end{proof}

The following lemma will be used to transfer estimates
for the multilinear forms into a physical space. 

\begin{lemma}
\label{lemma-multilinear-in-physical}
Given $f_j \in \mathcal{S}$, we have 
\begin{equation}
\label{multi-linear-form-inverse-fourier}
\widehat{\mathcal{F}}^{-1} \left[ e^{-is\lr{k}}  \mathcal{T}_{\b, \eps_1, \eps_2, \eps_3, \eps_4} 
  (f_1, f_2, f_3, f_4)(k)\right] (x) = \widehat{\mathcal{F}}^{-1}[\b](x) \prod_{j =1,2,3,4} u_j(s, \eps_j x),
\end{equation}
where $u_j(s,x) = \lr{D}^{-1}e^{-is\lr{D}} \reallywidecheck{f_j}(x)$ with 
$\lr{D} = \sqrt{-\partial_x^2+1}$ and $\reallywidecheck{f}$ the inverse Fourier transform.
\end{lemma}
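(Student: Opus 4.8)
The plan is to verify \eqref{multi-linear-form-inverse-fourier} by a direct computation, expanding both sides in terms of standard Fourier integrals and recognizing the right-hand side as a product of convolutions. The key observation is that each factor $1/\lr{l} \cdot e^{-is\iota_j \lr{l}} f_j^{(\iota_j)}(l)$ appearing in the definition \eqref{multilinear-form} of $\mathcal{T}$ is exactly the (distorted-free, i.e.\ standard) Fourier transform of $u_j(s,\eps_j x)$, up to the sign bookkeeping encoded by $\eps_j$ and $\iota_j$; the role of the distribution $\b$ evaluated at the linear combination $\eps_0 k - \iota_1\eps_1 l - \cdots$ is to produce, after inverse Fourier transform in $k$, the factor $\widehat{\mathcal{F}}^{-1}[\b](x)$ multiplying that product.

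\textbf{Step 1: Set-up and notation.} Restrict, as stated, to $\eps_0=+$ and (by the reduction already made) to $\iota_1=\iota_2=\iota_3=\iota_4=+$, so that $\Phi = \lr{k}-\lr{l}-\lr{m}-\lr{n}-\lr{p}$ and $q = k - \eps_1 l - \eps_2 m - \eps_3 n - \eps_4 p$. Write out
\begin{equation*}
e^{-is\lr{k}}\mathcal{T}_{\b,\eps_1,\eps_2,\eps_3,\eps_4}(f_1,f_2,f_3,f_4)(k)
= \int \prod_{j} \frac{e^{-is\lr{\cdot_j}}}{\lr{\cdot_j}} f_j(\cdot_j)\; \b(q)\, dlmnp,
\end{equation*}
where $\cdot_1=l,\cdot_2=m,\cdot_3=n,\cdot_4=p$. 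The factor $e^{-is\lr{k}}$ cancels the $\lr{k}$ part of $e^{is\Phi}$, leaving exactly the four factors $e^{-is\lr{\cdot_j}}$ shown.

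\textbf{Step 2: Recognize each factor as a Fourier transform.} By definition of $u_j$, one has $\widehat{u_j(s,\cdot)}(l) = \lr{l}^{-1} e^{-is\lr{l}} f_j(l)$ (note $\widehat{\reallywidecheck{f_j}} = f_j$). Hence $\widehat{u_j(s,\eps_j\,\cdot)}(l) = \lr{l}^{-1} e^{-is\lr{l}} f_j(\eps_j l)$ after the change of variable $l\mapsto\eps_j l$ (for $\eps_j=-$ this uses $\lr{-l}=\lr{l}$). I will then substitute the Fourier inversion formula $\widehat{\mathcal{F}}^{-1}[\b](x) = (2\pi)^{-1/2}\int \b(q) e^{iqx}\,dq$ and, for each $j$, $u_j(s,\eps_j x) = (2\pi)^{-1/2}\int \lr{\cdot_j}^{-1} e^{-is\lr{\cdot_j}} f_j(\eps_j \cdot_j) e^{i\eps_j \cdot_j x}\,d\cdot_j$ into the right-hand side of \eqref{multi-linear-form-inverse-fourier}.

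\textbf{Step 3: Collapse the $x$-integral.} Writing $\widehat{\mathcal{F}}^{-1}\big[e^{-is\lr{k}}\mathcal{T}(\cdots)\big](x) = (2\pi)^{-1/2}\int e^{ikx} e^{-is\lr{k}}\mathcal{T}(\cdots)(k)\,dk$ and inserting the Step 1 expression, the $k$-integral produces $\int e^{ikx}\b(q)\,dk$; changing variables from $k$ to $q$ (holding $l,m,n,p$ fixed, so $dk=dq$ and $k = q + \eps_1 l + \eps_2 m + \eps_3 n + \eps_4 p$) factors the exponential as $e^{iqx}\prod_j e^{i\eps_j \cdot_j x}$. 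Matching this term by term against the expansion of the right-hand side from Step 2 (after relabeling the dummy variables $\cdot_j$), both sides become the same iterated integral, which gives \eqref{multi-linear-form-inverse-fourier}. One should note the overall powers of $2\pi$: the left side carries $(2\pi)^{-1/2}$ from $\widehat{\mathcal{F}}^{-1}$ in $k$, and the right side carries $(2\pi)^{-1/2}$ for $\widehat{\mathcal{F}}^{-1}[\b]$ and $(2\pi)^{-1/2}$ for each $u_j$, i.e.\ $(2\pi)^{-5/2}$ total — so strictly speaking the identity holds with the normalization convention fixed by \eqref{multilinear-form} (the constant is absorbed exactly as in the passage from \eqref{general-N-term} to \eqref{N-plus-in-multilinear}); I would either state the identity up to this explicit constant or normalize $\mathcal{T}$ so it is exact, consistently with the rest of Section~\ref{section-quartic}.

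\textbf{Main obstacle.} There is no serious analytic difficulty here — everything is a Fubini/change-of-variables manipulation valid for Schwartz $f_j$ and a tempered distribution $\b$ (pairings against Schwartz functions, with the $\b$-pairing interpreted distributionally). The one point requiring care is the bookkeeping of the signs $\eps_j$ and of the normalization constants: getting the argument of each $u_j$ to come out as $\eps_j x$ rather than $x$ (or $-\eps_j x$) hinges on correctly tracking which variable each $\eps_j$ multiplies inside $q$ versus inside the oscillatory factors, and making sure the change of variables $\cdot_j \mapsto \eps_j\cdot_j$ is applied on the correct side. I would do this carefully for $j=1$ in full and remark that $j=2,3,4$ are identical, then note that the general $(\iota_1,\dots,\iota_4)$ case only changes $e^{-is\lr{\cdot_j}}$ to $e^{-is\iota_j\lr{\cdot_j}}$, which is the Fourier transform of $\lr{D}^{-1}e^{-is\iota_j\lr{D}}\reallywidecheck{f_j}$ — harmless for the algebra.
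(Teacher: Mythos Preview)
Your proposal is correct and is precisely the ``explicit computation'' the paper has in mind (the paper's own proof is simply ``left to the reader''). Your observation about the $(2\pi)^2$ normalization discrepancy is accurate and well-handled: the identity as written in the paper is indeed off by this constant, but since the lemma is only ever applied through the inequality \eqref{L-2-multilinear-singular-part-bound-1} with a $\lesssim$, this is harmless for the rest of the argument.
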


The proof relies on explicit computations, and is left to the reader.

A direct application of Lemma \ref{lemma-multilinear-in-physical} 
leads to 
\begin{equation}
\label{L-2-multilinear-singular-part-bound-1}
\normm{\mathcal{T}_{\b, \eps_1, \eps_2, \eps_3, \eps_4} (f_1, f_2, f_3, f_4)}{k} 
  \lesssim \Big\| \widehat{\mathcal{F}}^{-1}[\b] \prod_{j =1,2,3,4} u_j(s, \eps_j \cdot) \Big\|_{L^2_x}
\end{equation}

\begin{remark}
Throughout the computations for the singular part,
the distribution $\b$ is replaced by $\widehat{\nu_+}$,
which is given by the formula
\begin{equation}
\label{nu-plus-fourier}
\widehat{\nu_+}(k) = \sqrt{\frac{\pi}{2}}\delta_0 + \frac{\widehat{\zeta}(k)}{ik}+\widehat{\overline{\omega}} (k),
\end{equation}
where $\zeta$ is an even $C_c^\infty$ function with integral $1$ 
and $\overline{\omega}$ denotes a (generic) $C_c^\infty$ function.
See for example \cite{GPR18}
\end{remark}

From the formula \eqref{N-plus-in-multilinear} we see that $\mathcal{N}_+$ 
is a sum of several terms. 
The analysis for all these terms is similar and, therefore, 
we consider only the terms with fixed $\iota_1,\iota_2,\iota_3,\iota_4=+$
and drop those from our notation.
Then, to prove \eqref{quartic-singular-part-goal}, it suffices to show that,
for every $\epsilon_1,\epsilon_2,\epsilon_3,\epsilon_4\in\{+,-\}$ we have
\begin{equation}
\label{quartic-singular-part-goal-2}
\Big\| \int_0^t \partial_k 
  \sum_{\epsilon_0 \in \{+,-\}} \overline{a_{+, \eps_0}}(k) \,
  \mathcal{T}_{\widehat{\nu_+}, \eps_0, \eps_1, \eps_2, \eps_3, \eps_4} (f_1, f_2, f_3, f_4)(k) \, 
  ds \Big\|_{L^2_k} \lesssim \|v\|_X^4,
\end{equation}
where we set
\begin{equation*}
f_j(s,\cdot) = a_{+, \eps_j}(\cdot) \wg (\cdot) \quad \text{for  } j=1,2,3,4.
\end{equation*}

Notice that the coefficients $a_{+, \eps_0}(k)$ have jump discontinuities
at $k=0$, see \eqref{a-functions}, 
which may, in principle, result in a singular $\delta$-type contribution when differentiated.
However, using the formulas \eqref{a-functions} and the vanishing at $k=0$
of $T(k)$ and $R_\pm(k)+1$, see \eqref{T-behaviour-around-zero}, one can see that this is not the case
for the summation in \eqref{quartic-singular-part-goal-2}.
More precisely, we first write
\begin{align*}
& \sum_{\epsilon_0 \in \{+,-\}} \overline{a_{+, \eps_0}}(k) \,
  \mathcal{T}_{\widehat{\nu_+}, \eps_0, \eps_1, \eps_2, \eps_3, \eps_4} (f_1, f_2, f_3, f_4)
\\
& = \big[ \overline{a_{+,+}}(k) + \overline{a_{+,-}}(k) \big] \,
  \mathcal{T}_{\widehat{\nu_+}, +, \eps_1, \eps_2, \eps_3, \eps_4} (f_1, f_2, f_3, f_4)
\\ 
& + \overline{a_{+,-}}(k) \,
  \Big[ \mathcal{T}_{\widehat{\nu_+}, -, \eps_1, \eps_2, \eps_3, \eps_4} (f_1, f_2, f_3, f_4)
  - \mathcal{T}_{\widehat{\nu_+}, +, \eps_1, \eps_2, \eps_3, \eps_4} (f_1, f_2, f_3, f_4) \Big],
\end{align*}
and observe that the difference in the last line vanishes at $k=0$, 
see  \eqref{particular-multilinear-form} and \eqref{multilinear-form}.
Then, we use that, for $\iota_0 \in \{+,-\}$,
$\partial_k (a_{\iota_0, +}(k) + a_{\iota_0,-}(k)) = \mathbf{1}_{\iota_0}(k) \partial_k T(\iota_0k) 
  + \mathbf{1}_{-\iota_0}(k) \partial_k R_{\iota_0}(-\iota_0k)$.
Therefore, using also \eqref{uniform-estimate-T-R} and Minkowski's inequality we can bound
\begin{align*}
\Big\| \int_0^t \partial_k \sum_{\epsilon_0 \in \{+,-\}} \overline{a_{+, \eps_0}}
  ~ \mathcal{T}_{\widehat{\nu_+}, \eps_0, \eps_1, \eps_2, \eps_3, \eps_4} (f_1, f_2, f_3, f_4) ds \Big\|_{L^2_k} 
  \lesssim \int_0^t \normm{\mathcal{T}_{\widehat{\nu_+}, \eps_1, \eps_2, \eps_3, \eps_4} (f_1, f_2, f_3, f_4)}{k}ds 
  \\
  + \Big\|  \int_0^t 
  ~ \partial_k\mathcal{T}_{\widehat{\nu_+}, \eps_1, \eps_2, \eps_3, \eps_4} (f_1, f_2, f_3, f_4) ds \Big\|_{L^2_k}
:= E_1(t) + E_2(t).
\end{align*}
Below we give estimates for $E_1(t)$ and $E_2(t)$, 
and show that each of them is bounded by $\n{v}_X^4$, 
which would imply the bound \eqref{quartic-singular-part-goal}.


\subsubsection{Estimates for $E_1$}
We show that $E_1(t) \lesssim \n{v}_X^4$ for every $t \geq 0$. 
Using \eqref{L-2-multilinear-singular-part-bound-1}, 
\begin{align}\label{E1}
\begin{split}
E_1(t) & \lesssim \int_0^t \normm{\nu_+ u_1(s,\eps_1 \cdot) u_2(s,\eps_2 \cdot) u_3(s,\eps_3 \cdot) 
  u_4(s,\eps_4 \cdot)}{x} ds
  \\
& \lesssim \int_0^t \normm{u_1(s, \cdot)}{x} \|u_2(s, \cdot)\|_{L_x^\infty} \|u_3(s, \cdot)\|_{L_x^\infty} 
  \|u_4(s, \cdot)\|_{L_x^\infty} ds,
\end{split}
\end{align}
where $u_j(s,x) = \lr{D}^{-1} e^{-is \lr{D}} \widehat{\mathcal{F}}^{-1} (a_{+, \epsilon_j} \wg)$.
The 
formulas \eqref{a-functions} and the bounds in \eqref{uniform-estimate-T-R}
imply that for every $s \geq 0$ we have
\begin{equation}
\label{u-j-norm-for-E1}
\normm{u_j(s)}{x} 
  \lesssim \normm{\wg(s)}{k} \lesssim \n{v}_X.
\end{equation}
{Using \eqref{dispersive-estimates-standard-fourier} we can bound
\begin{align}\label{u-j-infty-norm}
\begin{split}
\norm{u_j(s)}{x}{\infty}
  \lesssim \lr{s}^{-1/2} \left( \big\| \partial_k \big( a_{+, \epsilon_j} \wg(s) \big) \big\|_{L^2_k} 
    + \|\lr{k}^2 
    \wg(s) \|_{L^2_k} \right);
\end{split}
\end{align}
using that $\wg(s,0)=0$, we see that no singularity arises when differentiating $a_{+,\eps_j}$
and, therefore,
\begin{align}\label{dkawtg}
\big\| \partial_k \big( a_{+, \epsilon_j} \wg(s) \big) \big\|_{L^2_k} 
  \lesssim \big\| \partial_k \wg(s) \big\|_{L^2_k};
\end{align}
then, from the definition \eqref{main-norm} and \eqref{u-j-infty-norm}-\eqref{dkawtg} we have
\begin{equation}
\label{u-j-infty-norm-for-E1}
\norm{u_j(s)}{x}{\infty} \lesssim \lr{s}^{-1/2} \n{v}_X.
\end{equation}
Using \eqref{u-j-norm-for-E1} and \eqref{u-j-infty-norm-for-E1} in \eqref{E1} we obtain}
\begin{equation*}
E_1(t) \lesssim \n{v}_X^4 \int_0^t \lr{s}^{-3/2} ds\lesssim \n{v}_X^4. 
\end{equation*}

\subsubsection{Estimates for $E_2$}
We want to show that 
for every $t \geq 0$,
\begin{equation}
\label{E2-bound-goal}
\normm{\int_0^t 
~ \partial_k\mathcal{T}_{\widehat{\nu_+}, \eps_1, \eps_2, \eps_3, \eps_4} 
  (f_1, f_2, f_3, f_4) ds}{k} \lesssim \n{v}_X^4.
\end{equation}
From Lemma \ref{lemma-der-multilinear}, the left-hand side of the above expression can be written as
\begin{equation}
\label{E2-decomposition}
\begin{aligned}
\Big\| \int_0^t 
~ \partial_k & \mathcal{T}_{\widehat{\nu_+}, \eps_1, \eps_2, \eps_3, \eps_4}  (f_1, f_2, f_3, f_4) ds \Big\|_{L_k^2}\\
& = \normm{\lr{k}^{-1} \int_0^t s
~ \mathcal{T}_{q\widehat{\nu_+}, \eps_1, \eps_2, \eps_3, \eps_4} (f_1, f_2, f_3, f_4) ds}{k}\\
&\quad  + \normm{\lr{k}^{-1} \int_0^t \mathcal{T}_{\widehat{\nu_+}, \eps_1, \eps_2, \eps_3, \eps_4} (\lr{\cdot}\partial_l f_1, f_2, f_3, f_4) ds}{k}\\
&\quad + \normm{\lr{k}^{-1} \int_0^t  \mathcal{T}_{\widehat{\nu_+}, \eps_1, \eps_2, \eps_3, \eps_4} (f_1, \lr{\cdot} \partial_m f_2, f_3, f_4) ds}{k}\\
& \quad + \normm{\lr{k}^{-1} \int_0^t  \mathcal{T}_{\widehat{\nu_+}, \eps_1, \eps_2, \eps_3, \eps_4} (f_1, f_2, \lr{\cdot} \partial_n f_3, f_4) ds}{k}\\
& \quad + \normm{\lr{k}^{-1} \int_0^t  \mathcal{T}_{\widehat{\nu_+}, \eps_1, \eps_2, \eps_3, \eps_4} (f_1, f_2, f_3, \lr{\cdot} \partial_p f_4) ds}{k}\\
& \quad + \normm{\lr{k}^{-1}\int_0^t \int \frac{\Phi~ e^{is\Phi}}{\lr{l}\lr{m}\lr{n} \lr{p}} f_1(l) f_2(m) f_3(n) f_4(p) \partial_q \widehat{\nu_+} (q) dlmnpds}{k},\\[3pt]
& =: E_2^{(1)}(t) + E_2^{(2)}(t) + E_2^{(3)}(t) + E_2^{(4)}(t) + E_2^{(5)}(t) + E_2^{(6)}(t),
\end{aligned}
\end{equation}
where we recall that $q = k -\eps_1 l - \eps_2 m - \eps_3 n - \eps_4 p$. 
Therefore, to prove \eqref{E2-bound-goal} it suffices to show that every $E_2^{(j)}$
in \eqref{E2-decomposition} is bounded by $\n{v}_X^4$. 
This will be established in the lemmas below.

\begin{lemma}
\label{lemma-E2-1-bound}
For every $t \geq 0$, we have 
\begin{equation}
\label{E2-1-bound}
E_2^{(1)}(t) \lesssim \n{v}_X^4.
\end{equation}
\end{lemma}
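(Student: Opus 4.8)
The plan is to estimate $E_2^{(1)}(t)$, which by \eqref{E2-decomposition} is
\begin{equation*}
E_2^{(1)}(t) = \Big\| \lr{k}^{-1} \int_0^t s \,
  \mathcal{T}_{q\widehat{\nu_+}, \eps_1, \eps_2, \eps_3, \eps_4} (f_1, f_2, f_3, f_4) \, ds \Big\|_{L_k^2},
\end{equation*}
where $f_j(s,\cdot) = a_{+,\eps_j}(\cdot) \wg(s,\cdot)$. The key point is that the extra factor of $s$ in the integrand (coming from $\partial_k$ hitting the oscillating phase $e^{is\Phi}$) must be absorbed. The mechanism for this is the same as in the quadratic analysis of Section \ref{section-quadratic}: the distribution $q\widehat{\nu_+}$ is better behaved than $\widehat{\nu_+}$. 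Indeed, from \eqref{nu-plus-fourier} we have $\widehat{\nu_+}(q) = \sqrt{\pi/2}\,\delta_0 + \widehat{\zeta}(q)/(iq) + \widehat{\overline{\omega}}(q)$, so multiplying by $q$ kills the $\delta_0$, turns $\widehat{\zeta}(q)/(iq)$ into the smooth $\widehat{\zeta}(q)/i$, and leaves $q\,\widehat{\overline\omega}(q)$, which is still the Fourier transform of a Schwartz (in fact $C_c^\infty$, after a derivative) function. In other words, $q\widehat{\nu_+}$ is the Fourier transform of a function $\widetilde\nu_+$ with $\widehat{\mathcal{F}}^{-1}[q\widehat{\nu_+}] \in L^\infty_x$ and rapidly decaying — crucially with no Dirac mass and no $1/q$ singularity.

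With this in hand, I would apply Lemma \ref{lemma-multilinear-in-physical} with $\b = q\widehat{\nu_+}$ to write, for fixed $s$,
\begin{equation*}
\widehat{\mathcal{F}}^{-1}\big[ e^{-is\lr{k}} \mathcal{T}_{q\widehat{\nu_+}, \eps_1, \eps_2, \eps_3, \eps_4}(f_1,f_2,f_3,f_4)(k)\big](x)
  = \widehat{\mathcal{F}}^{-1}[q\widehat{\nu_+}](x) \prod_{j=1}^4 u_j(s, \eps_j x),
\end{equation*}
with $u_j(s,x) = \lr{D}^{-1} e^{-is\lr{D}} \widehat{\mathcal{F}}^{-1}(a_{+,\eps_j}\wg)$, exactly as in \eqref{u-j-norm-for-E1}-\eqref{u-j-infty-norm-for-E1}. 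Dropping the harmless $\lr{k}^{-1}$ and using Minkowski's inequality in $s$ followed by Plancherel, this gives
\begin{equation*}
E_2^{(1)}(t) \lesssim \int_0^t s \, \Big\| \widehat{\mathcal{F}}^{-1}[q\widehat{\nu_+}] \prod_{j=1}^4 u_j(s, \eps_j\cdot) \Big\|_{L^2_x} \, ds
  \lesssim \int_0^t s \, \normm{u_1(s)}{x} \prod_{j=2}^4 \norm{u_j(s)}{x}{\infty} \, ds,
\end{equation*}
where the localization weight $\widehat{\mathcal{F}}^{-1}[q\widehat{\nu_+}] \in L^1_x \cap L^\infty_x$ lets me put one factor in $L^2$ and the other three in $L^\infty$. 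Then I would invoke the $L^2$ bound \eqref{u-j-norm-for-E1}, namely $\normm{u_j(s)}{x} \lesssim \n{v}_X$ (using \eqref{uniform-estimate-T-R} and \eqref{isometry}), and the pointwise bound \eqref{u-j-infty-norm-for-E1}, namely $\norm{u_j(s)}{x}{\infty} \lesssim \lr{s}^{-1/2} \n{v}_X$ (using \eqref{dispersive-estimates-standard-fourier}, $\wg(s,0)=0$ to differentiate $a_{+,\eps_j}$ safely as in \eqref{dkawtg}, and the definition of $\n{v}_X$). This yields
\begin{equation*}
E_2^{(1)}(t) \lesssim \n{v}_X^4 \int_0^t s \, \lr{s}^{-3/2} \, ds,
\end{equation*}
which, however, \emph{diverges}. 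So three $L^\infty$ factors are not enough; the main obstacle is precisely getting enough time decay to beat the factor $s$.

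To fix this I would split: for $s \leq 1$ the integral is trivially $O(\n{v}_X^4)$, and for $s \geq 1$ I would use the \emph{local} decay of the $u_j$'s rather than plain dispersive decay, exploiting that $\widehat{\mathcal{F}}^{-1}[q\widehat{\nu_+}]$ is rapidly decaying in $x$. Writing $\widehat{\mathcal{F}}^{-1}[q\widehat{\nu_+}] = \lr{x}^{-2}\cdot(\lr{x}^2\widehat{\mathcal{F}}^{-1}[q\widehat{\nu_+}])$ with the second factor bounded, I would bound two of the four factors by the weighted $L^\infty$ norm using \eqref{dispersive-estimates} with $N$ chosen appropriately, or better, recall from Remark \ref{remark-chi-estimates} and Lemma \ref{local-l-infty-decay-lemma} that $\lr{x}^{-1}$ times the half-Klein-Gordon evolution applied to a generic $L^2$-datum with vanishing distorted transform at $0$ gains local decay $\lr{s}^{-3/4}$ (via \eqref{local-l-infty-decay-standard-Fourier}). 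Putting, say, the weight $\lr{x}^{-1}$ on two of the three $L^\infty$ factors and keeping one at $\lr{s}^{-1/2}$ gives total decay $\lr{s}^{-1/2 - 3/4 - 3/4} = \lr{s}^{-2}$, so
\begin{equation*}
E_2^{(1)}(t) \lesssim \n{v}_X^4 \Big( 1 + \int_1^t s \, \lr{s}^{-2} \, ds \Big) \cdot (\text{log correction}),
\end{equation*}
and one more quarter-power of decay — e.g. using $N=0$ carefully or the $\lr{x}^{-2}$ local decay of \eqref{local-l-infty-decay} on two factors to get $\lr{s}^{-1}$ each — removes the logarithm and yields the clean bound $E_2^{(1)}(t) \lesssim \n{v}_X^4$. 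In short: the structure of the argument is (i) exploit $q\widehat{\nu_+}$ to remove the singular/Dirac part of the weight and gain $x$-localization, (ii) pass to physical space via Lemma \ref{lemma-multilinear-in-physical}, (iii) distribute the four factors as one $L^2$ and three weighted $L^\infty$'s, and (iv) use local decay estimates to produce a power of $s$ strictly smaller than $-2$ so that $\int_0^\infty s\lr{s}^{-\alpha}ds < \infty$. The delicate point, and the one I'd be most careful about, is checking that the local decay rates of the $u_j$ are actually available given only the $X$-norm control (which includes $\lr{k}^2\wg \in L^2_k$, i.e. $H^2$ regularity, and $\partial_k\wg \in L^2_k$, i.e. $\lr{x}\chi \in L^2$) — this is exactly what \eqref{local-l-infty-decay-standard-Fourier} requires, so the bookkeeping closes.
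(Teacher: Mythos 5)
Your approach is essentially the paper's: exploit that $q\widehat{\nu_+} = -i\widehat\zeta(q)+q\widehat{\overline\omega}(q)=\widehat\Psi(q)$ with $\Psi$ Schwartz (killing the Dirac mass and the $1/q$), pass to physical space via Lemma \ref{lemma-multilinear-in-physical}/\eqref{L-2-multilinear-singular-part-bound-1}, and then feed the rapid $x$-decay of $\Psi$ into local decay estimates for the $u_j$. The paper does the final bookkeeping more simply than you do: it puts \emph{all four} factors in weighted $L^\infty$, extracting $\lr{x}^{-2}$ for each from $\Psi$, and applies \eqref{local-l-infty-decay-standard-Fourier} to each (using $\wg(s,0)=0$), obtaining $\lr{s}^{-3/4}$ per factor, i.e.\ $\lr{s}^{-3}$ total, which beats $s$ with room to spare; there is no need for the $L^2\times(L^\infty)^3$ split or the back-and-forth about a log correction. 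One small slip in your write-up: \eqref{local-l-infty-decay} is a statement about the distorted operator $\wD$, whereas here $u_j = \lr{D}^{-1}e^{-is\lr{D}}\widehat{\mathcal{F}}^{-1}(a_{+,\eps_j}\wg)$ is a flat Klein--Gordon evolution, so you cannot invoke \eqref{local-l-infty-decay}; the only local-decay estimate available is \eqref{local-l-infty-decay-standard-Fourier} with rate $\lr{s}^{-3/4}$. That is still enough (even in your $L^2\times(L^\infty)^3$ split, three weighted $L^\infty$ factors give $\lr{s}^{-9/4}<-2$), so the error does not break the argument, but the cleaner route is the paper's four-factor version.
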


\begin{proof}
First, according to the Fourier representation of $\widehat{\nu_+}$ in \eqref{nu-plus-fourier}, we have 
\begin{equation*}
q \widehat{\nu_+} = -i \widehat{\zeta}(q) + q \widehat{\overline{\omega}}(q) = \widehat{\Psi}(q)
\end{equation*}
with $\Psi$ being a Schwartz function. 
In the estimates below we extract several $\lr{x}^{-2}$ 
factors from $\Psi$, and use them to apply the local $L_x^\infty$ decay estimates 
\eqref{local-l-infty-decay-standard-Fourier}. 
Then, using Minkowski inequality and \eqref{L-2-multilinear-singular-part-bound-1}, we have  
\begin{equation*} \begin{aligned} 
E_2^{(1)} & \lesssim \int_0^t s
\normm{\mathcal{T}_{q\widehat{\nu_+}, \eps_1, \eps_2, \eps_3, \eps_4} (f_1, f_2, f_3, f_4)}{k} ds\\
& \lesssim \int_0^t s \normm{\Psi u_1(s,\eps_1 \cdot) u_2(s,\eps_2 \cdot) u_3(s,\eps_3 \cdot) u_4(s,\eps_4 \cdot)}{x} ds\\
& \lesssim \int_0^t s  
\|\lr{x}^{-2}u_1(s)\|_{L_x^\infty} 
\|\lr{x}^{-2}u_2(s)\|_{L_x^\infty} \|\lr{x}^{-2}u_3(s)\|_{L_x^\infty} \|\lr{x}^{-2}u_4(s)\|_{L_x^\infty} ds,
\end{aligned}
\end{equation*}
where $u_j(s,x) = \lr{D}^{-1} e^{-is \lr{D}} \widehat{\mathcal{F}}^{-1} (a_{+, \epsilon_j} \wg)$ 
for every $j=1,2,3,4$. 
Applying the local decay estimates \eqref{local-l-infty-decay-standard-Fourier}, 
using \eqref{a-functions} and the bounds in \eqref{uniform-estimate-T-R}
together with $\widetilde{g}(0)=0$,
we get 
\begin{equation*}
E_2^{(1)} \lesssim \n{v}_X^4 \int_0^t s \lr{s}^{-3} ds \lesssim  \n{v}_X^4.
\end{equation*}

\end{proof}

\begin{lemma}
\label{lemma-E2-j-bound}
For every $t \geq 0$ and $j=2,3,4,5$, we have 
\begin{equation}
\label{E2-j-bound}
E_2^{(j)}(t) \lesssim \n{v}_X^4.
\end{equation}
\end{lemma}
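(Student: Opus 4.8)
The terms $E_2^{(j)}$ for $j=2,3,4,5$ all have the same structure: a multilinear form $\mathcal{T}_{\widehat{\nu_+}, \eps_1, \eps_2, \eps_3, \eps_4}$ in which one of the four inputs $f_j = a_{+,\eps_j}\wg$ has been replaced by $\lr{\cdot}\partial f_j$, integrated in $s$ and weighted by $\lr{k}^{-1}$. By symmetry of the roles of the four slots it suffices to treat $j=2$, i.e. to bound
\[
E_2^{(2)}(t) = \normm{\lr{k}^{-1} \int_0^t \mathcal{T}_{\widehat{\nu_+}, \eps_1, \eps_2, \eps_3, \eps_4} (\lr{\cdot}\partial_l f_1, f_2, f_3, f_4) \, ds}{k}.
\]
First I would discard the harmless weight $\lr{k}^{-1}\leq 1$ and apply Minkowski's inequality in $s$, reducing matters to estimating $\int_0^t \normm{\mathcal{T}_{\widehat{\nu_+}, \eps_1, \eps_2, \eps_3, \eps_4}(\lr{\cdot}\partial_l f_1, f_2, f_3, f_4)}{k}\, ds$. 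Then, using Lemma \ref{lemma-multilinear-in-physical} and the bound \eqref{L-2-multilinear-singular-part-bound-1} (with $\b = \widehat{\nu_+}$, whose inverse Fourier transform is, by \eqref{nu-plus-fourier}, a distribution of the form $c\,\delta_0 + \text{(principal value)} + \text{Schwartz}$ — in particular a rapidly decaying function plus a delta, both of which give $L^\infty_x \to L^2_x$ control with $\lr{x}^{-2}$ weights to spare), I would pass to physical space and bound
\[
\normm{\mathcal{T}_{\widehat{\nu_+}}(\lr{\cdot}\partial_l f_1, f_2, f_3, f_4)}{k} \lesssim \normm{w_1(s)}{x}\, \|u_2(s)\|_{L^\infty_x}\|u_3(s)\|_{L^\infty_x}\|u_4(s)\|_{L^\infty_x},
\]
where $u_j = \lr{D}^{-1}e^{-is\lr{D}}\widehat{\mathcal{F}}^{-1}(a_{+,\eps_j}\wg)$ as before, and $w_1 = \lr{D}^{-1}e^{-is\lr{D}}\widehat{\mathcal{F}}^{-1}(\lr{\cdot}\partial_l(a_{+,\eps_1}\wg))$ carries the extra derivative.

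The two ingredients are: (i) an $L^2_x$ bound for $w_1$, and (ii) the $L^\infty_x$ decay of the $u_j$'s, which is exactly \eqref{u-j-infty-norm-for-E1}: $\|u_j(s)\|_{L^\infty_x} \lesssim \lr{s}^{-1/2}\n{v}_X$, valid because $\wg(s,0)=0$ kills the would-be singularity when $\partial_k$ hits $a_{+,\eps_j}$, cf.\ \eqref{dkawtg}. For (i), since $\lr{D}^{-1}e^{-is\lr{D}}\widehat{\mathcal{F}}^{-1}$ is unitary up to the harmless $\lr{D}^{-1}$, we have $\normm{w_1(s)}{x} \lesssim \normm{\lr{k}\partial_k(a_{+,\eps_1}\wg)}{k}$. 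Distributing the derivative, $\lr{k}\partial_k(a_{+,\eps_1}\wg) = \lr{k}(\partial_k a_{+,\eps_1})\wg + \lr{k}a_{+,\eps_1}\partial_k\wg$; the first term is controlled using $|\partial_k a_{+,\eps_1}(k)|\lesssim \lr{k}^{-1}$ (from \eqref{uniform-estimate-T-R} and \eqref{a-functions}) together with $\widetilde g(s,0)=0$ to absorb the jump, giving $\lesssim \normm{\wg(s)}{k}\lesssim\n{v}_X$ modulo the low-frequency cancellation; actually, more carefully, because $a_{+,\eps_1}$ has only a jump discontinuity and $\wg(s,0)=0$, the product $a_{+,\eps_1}\wg$ is $H^1_k$ with $\normm{\partial_k(a_{+,\eps_1}\wg)}{k}\lesssim \normm{\partial_k\wg}{k}$, and the extra $\lr{k}$ is harmless since we also control $\normm{\lr{k}^2\wg}{k}\leq\n{v}_X$ — I would split into $|k|\lesssim 1$ and $|k|\gtrsim 1$, using $\normm{\partial_k\wg}{k}\leq\n{v}_X$ on the former and $\normm{\lr{k}^2\wg}{k}\leq\n{v}_X$ plus $|\partial_k a_{+,\eps_1}|\lesssim\lr{k}^{-1}$ on the latter. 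Either way $\normm{w_1(s)}{x}\lesssim\n{v}_X$ uniformly in $s$.

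Combining, $\normm{\mathcal{T}_{\widehat{\nu_+}}(\lr{\cdot}\partial_l f_1, f_2, f_3, f_4)}{k}\lesssim \n{v}_X^4 \lr{s}^{-3/2}$, and hence
\[
E_2^{(2)}(t) \lesssim \n{v}_X^4 \int_0^t \lr{s}^{-3/2}\, ds \lesssim \n{v}_X^4,
\]
as desired. The analogous estimates for $j=3,4,5$ follow verbatim by relabeling the slots (the only asymmetry — that the differentiated factor sits in position $1$ versus $2,3,4$ — is immaterial since we only ever put one factor in $L^2_x$ and the rest in $L^\infty_x$, and all four $u_j,w_j$ obey the same bounds). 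The main obstacle, and the only point requiring care, is ensuring that differentiating the discontinuous scattering coefficients $a_{+,\eps_j}$ in the definition of $w_1$ does not produce an uncontrollable $\delta$-contribution; this is exactly where the genericity-driven vanishing $\wg(s,0)=0$ (equivalently $T(0)=0$, $R_\pm(0)=-1$) enters, as already exploited in \eqref{dkawtg}. Everything else is a routine combination of Minkowski's inequality, the isometry properties of the distorted and flat Fourier transforms, and the linear decay estimate \eqref{dispersive-estimates-standard-fourier}.
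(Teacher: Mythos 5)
Your overall route is the paper's: reduce to $j=2$ by symmetry of the four slots, discard $\lr{k}^{-1}$, apply Minkowski in $s$, pass to physical space via \eqref{L-2-multilinear-singular-part-bound-1}, split one factor into $L^2_x$ and three into $L^\infty_x$, and use \eqref{dispersive-estimates-standard-fourier} for the $\lr{s}^{-1/2}$ decay. The $L^\infty$ bounds on the undifferentiated factors, including the observation that \eqref{dkawtg} handles the jump in $a_{+,\eps_j}$ thanks to $\wg(s,0)=0$, are all correct.

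However, there is a genuine gap in your $L^2_x$ bound for the differentiated factor. You discard $\lr{D}^{-1}$ as ``harmless,'' arriving at $\normm{w_1(s)}{x}\lesssim\normm{\lr{k}\partial_k(a_{+,\eps_1}\wg)}{k}$, and then try to close via the Leibniz split $\lr{k}\partial_k(a_{+,\eps_1}\wg)=\lr{k}(\partial_k a_{+,\eps_1})\wg+\lr{k}a_{+,\eps_1}\partial_k\wg$. The first term is fine (as you say, $|\partial_k a_{+,\eps_1}|\lesssim\lr{k}^{-1}$ away from $0$, and the jump is killed by $\wg(s,0)=0$). But the second term reduces to $\normm{\lr{k}\partial_k\wg}{k}$, which is controlled by \emph{neither} $\normm{\partial_k\wg}{k}$ \emph{nor} $\normm{\lr{k}^2\wg}{k}$ — the $X$-norm simply does not contain this quantity, and it is not dominated by any combination of the two (a bump of width $n^{-2}$, height $n^{-1}$, centered at $k=n$, keeps both $X$-norm pieces $O(1)$ while $\normm{\lr{k}\partial_k\wg}{k}\sim n$). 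The splitting into $|k|\lesssim 1$ and $|k|\gtrsim 1$ does not touch this term. The point you miss is precisely why Lemma \ref{lemma-der-multilinear} inserts the factor $\lr{\cdot}$ into the input: it is designed to be \emph{exactly cancelled} by the $\lr{D}^{-1}$ in the physical-space representation \eqref{multi-linear-form-inverse-fourier}. Keeping $\lr{D}^{-1}$ gives
\begin{equation*}
\normm{u_1(s)}{x}=\normm{\lr{k}^{-1}\cdot\lr{k}\,\partial_k\big(a_{+,\eps_1}\wg\big)}{k}
=\normm{\partial_k\big(a_{+,\eps_1}\wg\big)}{k}\lesssim\normm{\partial_k\wg}{k}\leq\n{v}_X,
\end{equation*}
with the last inequality by \eqref{dkawtg}, and the argument then closes exactly as you intended. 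The rest of your reasoning — including the relabeling for $j=3,4,5$ — is correct once this cancellation is used.
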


\begin{proof}
The proofs for $j=2,3,4,5$ are identical so we only prove \eqref{E2-j-bound} for $j=2$. 
Using \eqref{L-2-multilinear-singular-part-bound-1} 
we can estimate as follows:
\begin{equation*}
\begin{aligned}
E_2^{(2)}(t) \lesssim \int_0^t \normm{u_1(s, \cdot)}{x} \|u_2(s, \cdot)\|_{L_x^\infty} 
  \|u_3(s, \cdot)\|_{L_x^\infty} \|u_4(s, \cdot)\|_{L_x^\infty} ds,
\end{aligned}
\end{equation*}
where 
\begin{equation*}
\left\{ 
\begin{array}{l}
u_1(s,x) = \lr{D}^{-1} e^{-is \lr{D}} \widehat{\mathcal{F}}^{-1} (\lr{\cdot} \partial_l [a_{+, \epsilon_1} \wg]),
\\
u_r(s,x) = \lr{D}^{-1} e^{-is \lr{D}} \widehat{\mathcal{F}}^{-1} (a_{+, \epsilon_r} \wg) \quad \text{ for  } r=2,3,4. 
\end{array}
\right.
\end{equation*}
For $u_1$ we have 
$\normm{u_1(s)}{x} \lesssim \n{\wg}_{H_l^1} \lesssim \n{v}_X$,
while the $L_x^\infty$ norms of $u_r$ for $r=2,3,4$ 
are bounded similarly to \eqref{u-j-infty-norm-for-E1}.
Combining these estimates, we get the desired result
\begin{equation*}
E_2^{(2)} \lesssim \n{v}_X^4 \int_0^t \lr{s}^{-3/2} ds \lesssim \n{v}_X^4.
\end{equation*}
\end{proof}

The next lemma will complete the proof of \eqref{E2-bound-goal}.

\begin{lemma}
\label{lemma-E2-6-bound}
For every $t \geq 0$, we have 
\begin{equation}
\label{E2-6-bound}
E_2^{(6)}(t) \lesssim \n{v}_X^4.
\end{equation}
\end{lemma}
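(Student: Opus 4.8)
The plan is to bound $E_2^{(6)}(t)$, which arises when the derivative $\partial_k$ falls on the distribution $\widehat{\nu_+}$ in the multilinear form. The key observation is that the extra factor $\Phi$ in the integrand of $E_2^{(6)}$ compensates for the absence of the $s$ factor, via the identity $\Phi e^{is\Phi} = -i\partial_s(e^{is\Phi})$, so that after integrating by parts in $s$ one recovers an expression without $\Phi$ but with an integral that converges. Concretely, I would first rewrite $\partial_q \widehat{\nu_+}(q)$ using the Fourier representation \eqref{nu-plus-fourier}; since $\widehat{\nu_+}(k) = \sqrt{\pi/2}\,\delta_0 + \widehat{\zeta}(k)/(ik) + \widehat{\overline{\omega}}(k)$, the $k$-derivative produces a term $\partial_q\big(\widehat{\zeta}(q)/(iq)\big)$ plus a Schwartz contribution; the first of these is bounded and decays like $\lr{q}^{-2}$ away from the origin, and near $q=0$ one uses $\widehat{\zeta}(0)=1$ together with the evenness of $\zeta$ (so $\widehat{\zeta}'(0)=0$) to see that $\partial_q(\widehat{\zeta}(q)/(iq))$ is in fact bounded and integrable. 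In all cases $\partial_q\widehat{\nu_+}$ can be written as (a constant times) the Fourier transform of an $L^1$, indeed rapidly decaying, function, which I will call $\Psi_6$ with $|\lr{x}^{2}\Psi_6(x)| \lesssim 1$.

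Next I would apply Lemma \ref{lemma-multilinear-in-physical} (or rather the inequality \eqref{L-2-multilinear-singular-part-bound-1}, in the variant with $\Phi$ present). The factor $\Phi = \lr{k} - \lr{l} - \lr{m} - \lr{n} - \lr{p}$, once we move to physical space via the inverse Fourier transform, corresponds to the operator $\lr{D_x}$ acting on the output minus $\lr{D}$ acting on each $u_j$; since $u_j(s,x) = \lr{D}^{-1}e^{-is\lr{D}}\reallywidecheck{f_j}$, the operator $\lr{D}$ simply removes the $\lr{D}^{-1}$, so $\lr{D}u_j = e^{-is\lr{D}}\reallywidecheck{f_j}$. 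Thus the $\Phi$-weighted multilinear form in physical space is a sum of five terms, each of which is a product of $\widehat{\mathcal{F}}^{-1}[\widehat{\nu_+}]$ (a fixed Schwartz function), four factors of the type $u_j(s,\eps_j x)$, except in one of them the $\lr{D}^{-1}$ is removed. The term where $\lr{D_x}$ hits the output is harmless because $\lr{k}^{-1}$ outside the integral in $E_2^{(6)}$ cancels it. For the terms where $\lr{D}$ hits some $u_j$, we lose the $\lr{s}^{-1/2}$ local-in-time decay only if we also lose the $\lr{D}^{-1}$; but we can afford this because we still have three remaining factors $\lr{x}^{-2}u_r$ contributing $\lr{s}^{-3}$ decay via \eqref{local-l-infty-decay-standard-Fourier}, together with the Schwartz decay of $\Psi_6$, and the $L^2_x$ norm of $e^{-is\lr{D}}\reallywidecheck{f_j}$ is just $\|f_j\|_{L^2_k}\lesssim \|\wg\|_{L^2_k} \lesssim \|v\|_X$ using \eqref{u-j-norm-for-E1} (noting that $\widehat{\mathcal{F}}^{-1}$ and $e^{-is\lr{D}}$ are $L^2$-isometries and $a_{+,\eps_j}$ is bounded).

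Putting this together, I would estimate
\begin{equation*}
E_2^{(6)}(t) \lesssim \int_0^t \big\| \Psi_6\, (\lr{D}u_1)(s,\eps_1\cdot)\, u_2(s,\eps_2\cdot)\, u_3(s,\eps_3\cdot)\, u_4(s,\eps_4\cdot) \big\|_{L^2_x}\,ds + (\text{symmetric terms}),
\end{equation*}
and then bound each product in $L^2_x$ by placing $(\lr{D}u_1)(s,\eps_1\cdot)$ in $L^2_x$, distributing the $\lr{x}^{-2}$ weights from $\Psi_6$ onto the three factors $u_2,u_3,u_4$, and using \eqref{local-l-infty-decay-standard-Fourier} together with $\widehat{\wg}(0)=0$ (so that differentiating $a_{+,\eps_j}$ creates no $\delta$, exactly as in \eqref{dkawtg}) to get $\|\lr{x}^{-2}u_r(s)\|_{L^\infty_x} \lesssim \lr{s}^{-3/4}\|v\|_X$. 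This yields
\begin{equation*}
E_2^{(6)}(t) \lesssim \|v\|_X^4 \int_0^t \lr{s}^{-9/4}\,ds \lesssim \|v\|_X^4,
\end{equation*}
which is the claim. The main obstacle is the bookkeeping around the $\Phi$-factor: one must verify carefully that moving $\Phi$ into physical space produces only the operators $\lr{D}$ described above (and not some genuinely unbounded multiplier), and that the one potentially dangerous term — $\lr{D_x}$ acting on the whole output — is precisely killed by the $\lr{k}^{-1}$ prefactor in $E_2^{(6)}$; once that is checked, the time integrability is comfortable. An alternative, perhaps cleaner, route would be to integrate by parts in $s$ using $\Phi e^{is\Phi} = -i\partial_s e^{is\Phi}$ before transferring to physical space, trading $\Phi$ for a boundary term at $s=t$ and $s=0$ plus an integral with $\partial_s(f_1 f_2 f_3 f_4)$; the boundary terms are bounded using \eqref{local-l-infty-decay-standard-Fourier} with no time integral, and the bulk term gains an extra $\lr{s}^{-3/2}$ from $\partial_s\wg$ via Lemma \ref{lemma-prtl-t-wg}, which also closes — but this requires care since $\Phi$ may vanish, so one would localize in $|\Phi|$ as in the quadratic analysis; I would use whichever of the two is shorter to write out in full.
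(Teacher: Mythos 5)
Your primary route breaks at the very first step: the claim that $\partial_q\widehat{\nu_+}$ is ``a constant times the Fourier transform of a rapidly decaying function $\Psi_6$ with $|\lr{x}^2\Psi_6(x)|\lesssim 1$'' is false. Differentiating \eqref{nu-plus-fourier} produces $\sqrt{\pi/2}\,\delta_0'(q)$, which you have dropped; equivalently, $\widehat{\mathcal{F}}^{-1}[\partial_q\widehat{\nu_+}](x)=-ix\,\nu_+(x)$, and since $\nu_+(x)\to 1$ as $x\to+\infty$ this \emph{grows} linearly rather than decays. Moreover, $\partial_q\big(\widehat{\zeta}(q)/(iq)\big)$ is not bounded near $q=0$: with $\widehat{\zeta}(0)\neq 0$ one finds $\partial_q\big(\widehat{\zeta}(q)/(iq)\big)\sim -\widehat{\zeta}(0)/(iq^2)$, and the evenness of $\zeta$ (i.e.\ $\widehat{\zeta}'(0)=0$) does not remove this $q^{-2}$ singularity --- it only kills the $q^{-1}$ correction. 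Once $\Psi_6$ grows like $\lr{x}$, the weight arithmetic you set up no longer closes: the three local-decay factors $\lr{x}^{-1}u_r$ supply at most $\lr{x}^{-3}$, one factor is placed in $L^2_x$ carrying no weight, and $\lr{x}^{1}\cdot\lr{x}^{-3}$ is not enough to control the product in $L^\infty_x$ uniformly. So the estimate as written does not go through.

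The ``alternative, cleaner route'' you mention at the end is in fact the paper's approach, but your sketch of it misses the essential mechanism and adds a spurious difficulty. The integration by parts in $s$ via $\Phi e^{is\Phi}=-i\partial_s e^{is\Phi}$ requires \emph{no} localization in $|\Phi|$ --- there is no division by $\Phi$, so $\Phi$ vanishing is irrelevant here (your concern on this point is misplaced). What \emph{is} essential, and absent from your sketch, is the subsequent integration by parts in a frequency variable: using $\partial_q\widehat{\nu_+}(q)=-\eps_j\partial_{\cdot_j}\widehat{\nu_+}(q)$ one trades $\partial_q\widehat{\nu_+}$ for $\widehat{\nu_+}$ itself (whose inverse transform $\nu_+$ is bounded), at the price of differentiating the remaining factors. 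For the boundary term at $s=t$, this integration by parts produces a factor $s$ which is absorbed by $\lr{s}^{-3/2}$ dispersive decay; for the bulk term one picks the frequency variable so as not to differentiate the $\partial_s f_j$ factor, and then $\partial_s\wg$ supplies the extra $\lr{s}^{-3/2}$ via Lemma \ref{lemma-prtl-t-wg}. Without this step the $\delta_0'$ obstruction persists in both the boundary and bulk terms, so the argument does not close as you have described it.
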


\begin{proof}
We first consider the integral
\begin{equation*}
\mathcal{I} := \int_0^t \int \frac{\Phi~ e^{is\Phi}}{\lr{l}\lr{m}\lr{n} \lr{p}} f_1(l) f_2(m) f_3(n) f_4(p) \partial_q \widehat{\nu_+} (q) dlmnp~ds,
\end{equation*}
where $q = k -\eps_1 l - \eps_2 m -\eps_3 n -\eps_4 p$.
Using that $\Phi e^{is\Phi} = -i \partial_s e^{is\Phi}$, we integrate $\mathcal{I}$ by parts in $s$ to get 
\begin{equation}
\label{I-in-E2-6-decomposition}
\begin{aligned}
\mathcal{I} & = - i \int \frac{ e^{is\Phi}}{\lr{l}\lr{m}\lr{n} \lr{p}} f_1(l) f_2(m) f_3(n) f_4(p) \partial_q \widehat{\nu_+} (q) dlmnp \Bigg|_{s=0}^{s=t}\\
& \quad + i \int_0^t \int \frac{e^{is\Phi}}{\lr{l}\lr{m}\lr{n} \lr{p}} \partial_s [f_1(l) f_2(m) f_3(n) f_4(p)] \partial_q \widehat{\nu_+} (q) dlmnp ~ds\\[5pt]
&=: -i \mathcal{I}_1(t,k) + i \mathcal{I}_2(t,k).
\end{aligned}
\end{equation}
Both integrals contain the term $\partial_q \widehat{\nu_+} (q)$, which we treat using the relations 
\begin{equation}
\label{prtl-nu-plus}
\partial_q \widehat{\nu_+} (q) = - \eps_1 \partial_l \widehat{\nu_+} (q) 
  = - \eps_2 \partial_m \widehat{\nu_+} (q) = - \eps_3 \partial_n \widehat{\nu_+} (q) 
  = - \eps_4 \partial_p \widehat{\nu_+} (q)
\end{equation}
to integrate by parts and deal with $\widehat{\nu_+} (q)$ only. 

We look at 
$\mathcal{I}_1$ first. 
From \eqref{prtl-nu-plus}, we integrate by parts in $l$, and obtain 
\begin{equation}
\label{I1-in-E2-6-decomposition}
\begin{aligned}
\mathcal{I}_1 & =\eps_1 \int \frac{ e^{is\Phi}}{\lr{l}\lr{m}\lr{n} \lr{p}} \partial_l f_1(l) f_2(m) f_3(n) f_4(p) \widehat{\nu_+} (q) dlmnp \Bigg|_{s=0}^{s=t}\\
& \quad - \eps_1 \int \frac{l  e^{is\Phi}}{\lr{l}^3\lr{m}\lr{n} \lr{p}} f_1(l) f_2(m) f_3(n) f_4(p) \widehat{\nu_+} (q) dlmnp \Bigg|_{s=0}^{s=t}\\
&\quad - i \eps_1  \int \frac{s l  e^{is\Phi} }{\lr{l}^2\lr{m}\lr{n} \lr{p}} f_1(l) f_2(m) f_3(n) f_4(p) \widehat{\nu_+} (q) dlmnp \Bigg|_{s=0}^{s=t}\\[3pt]
&=: \mathcal{I}_1^{(1)}(t,k) + \mathcal{I}_1^{(2)}(t,k) +\mathcal{I}_1^{(3)}(t,k).
\end{aligned}
\end{equation}
The integrals above are similar in structure, and, in fact, can be treated in the same way. 
We only show how to deal with $\mathcal{I}_1^{(3)}$ as it contain the potentially 
problematic factor $s$. 
Also, we drop the endpoint values of $s$ and, abusing notations, only work with
\begin{equation*}
\mathcal{I}_1^{(3)}(s,k) := - i \eps_1  \int \frac{s l  e^{is\Phi} }{\lr{l}^2\lr{m}\lr{n} \lr{p}}
  f_1(l) f_2(m) f_3(n) f_4(p) \widehat{\nu_+} (q) dlmnp.
\end{equation*}
From \eqref{particular-multilinear-form}, we have 
\begin{equation*}
\mathcal{I}_1^{(3)}(s,k) = -i \eps_1 s \mathcal{T}_{ \widehat{\nu_+}, \eps_1, \eps_2, \eps_3, \eps_4} \left(\frac{\cdot}{\lr{\cdot}} f_1, f_2, f_3, f_4 \right).
\end{equation*}
Then, using \eqref{L-2-multilinear-singular-part-bound-1}, 
\begin{equation*}
\begin{aligned}
\normm{\mathcal{I}_1^{(3)}(s)}{k} & \lesssim s \normm{\nu_+ u_1(s, \eps_1\cdot) u_2(s, \eps_2 \cdot) u_3(s, \eps_3 \cdot) u_4(s, \eps_4 \cdot)}{x}\\
& \lesssim s \normm{u_1(s, \cdot)}{x} \|u_2(s, \cdot)\|_{L_x^\infty} \|u_3(s, \cdot)\|_{L_x^\infty} \|u_4(s, \cdot)\|_{L_x^\infty}
\end{aligned}
\end{equation*}
where 
\begin{equation*}
\left\{ 
\begin{array}{l}
u_1(s,x) = \lr{D}^{-1} e^{-is \lr{D}} \widehat{\mathcal{F}}^{-1} \left(\frac{\cdot}{\lr{\cdot}} a_{+, \epsilon_1} \wg\right),\\
u_r(s,x) = \lr{D}^{-1} e^{-is \lr{D}} \widehat{\mathcal{F}}^{-1} (a_{+, \epsilon_r} \wg) \quad \text{ for  } r=2,3,4.
\end{array}
\right.
\end{equation*}
Then, 
the dispersive decay estimates \eqref{dispersive-estimates-standard-fourier} imply
\begin{equation*}
\normm{\mathcal{I}_1^{(3)}(s)}{k} \lesssim s \lr{s}^{-3/2} \n{v}_X^4 \lesssim \n{v}_X^4.
\end{equation*}
Similar steps repeated for $\mathcal{I}_1^{(1)}(t,k)$ and $\mathcal{I}_1^{(2)}(t,k)$ 
from \eqref{I1-in-E2-6-decomposition} give the same bound $\n{v}_X^4$ for the integrals.
Combining such estimates we have $\normm{\mathcal{I}_1(t)}{k} \lesssim \n{v}_X^4$ for every $t\geq 0$, as desired.

It remains to verify $\normm{\mathcal{I}_2(t)}{k} \lesssim \n{v}_X^4$ for $\mathcal{I}_2$ 
given in \eqref{I-in-E2-6-decomposition}. As in previous computations, 
based on \eqref{prtl-nu-plus} we use $\partial_q \widehat{\nu_+}$ 
to integrate by parts in one of the integration variables $l, m, n$ or $p$. 
The choice of the variable 
depends on what function $\partial_s$ hits;
more precisely, 
when integrating by parts, we want 
avoid hitting the function of the type $\partial_s f_j$. 
Applying $\partial_s$ to the product of functions in the $\mathcal{I}_2$ 
expression in \eqref{I-in-E2-6-decomposition}, we obtain four terms which can all be treated 
in the same way, so we only look at one of them:
\begin{equation}
\label{I2-in-E2-6-decomposition}
\begin{aligned}
\mathcal{I}_2^{(1)}(t,k) 
  & := \int_0^t \int \frac{e^{is\Phi}}{\lr{l}\lr{m}\lr{n} \lr{p}} [\partial_s f_1(l)] 
  f_2(m) f_3(n) f_4(p) \partial_q \widehat{\nu_+} (q) dlmnp~ds
.
\end{aligned}
\end{equation}
We use \eqref{prtl-nu-plus} 
to integrate by parts in $m$, and using \eqref{particular-multilinear-form} we write
\begin{equation*}
\begin{aligned}
\mathcal{I}_2^{(1)}(t,k) & = \eps_2 \int_0^t \mathcal{T}_{ \widehat{\nu_+}, \eps_1, \eps_2, \eps_3, \eps_4} \left(\partial_s f_1, \partial_m f_2, f_3, f_4 \right) ds\\
& \quad - \eps_2 \int_0^t \mathcal{T}_{ \widehat{\nu_+}, \eps_1, \eps_2, \eps_3, \eps_4} \left(\partial_s f_1, \frac{\cdot}{\lr{\cdot}^2} f_2, f_3, f_4 \right) ds\\
& \quad - i \eps_2 \int_0^t s \mathcal{T}_{ \widehat{\nu_+}, \eps_1, \eps_2, \eps_3, \eps_4} \left(\partial_s f_1, \frac{\cdot}{\lr{\cdot}} f_2, f_3, f_4 \right) ds
=: J_1(t,k) + J_2(t,k) + J_3(t,k).
\end{aligned}
\end{equation*}
Note that all three integrals contain the term $\partial_s f_1$, 
and we eventually use it to get $\lr{s}^{-3/2}$ decay via \eqref{prtl-t-wg-estimates} 
which allows to overcome the factor $s$ in the integral $J_3$ without any difficulties. 

For the first integral we use 
\eqref{L-2-multilinear-singular-part-bound-1}, 
\begin{equation}
\label{J1-in-E2-6-bound-1}
\begin{aligned}
\normm{J_1(t)}{k} & \lesssim \int_0^t \normm{\nu_+ u_1(s, \eps_1\cdot) u_2(s, \eps_2 \cdot) u_3(s, \eps_3 \cdot) u_4(s, \eps_4 \cdot)}{x} ds\\
& \lesssim \int_0^t \|u_1(s, \cdot)\|_{L_x^\infty} \normm{u_2(s, \cdot)}{x}  \|u_3(s, \cdot)\|_{L_x^\infty} \|u_4(s, \cdot)\|_{L_x^\infty} ds
\end{aligned}
\end{equation}
where 
\begin{equation*}
\left\{ 
\begin{array}{l}
u_1(s,x) = \lr{D}^{-1} e^{-is \lr{D}} \widehat{\mathcal{F}}^{-1} (a_{+, \epsilon_1} \partial_s \wg),\\
u_2(s,x) = \lr{D}^{-1} e^{-is \lr{D}} \widehat{\mathcal{F}}^{-1} ( \partial_m [a_{+, \epsilon_2} \wg]),\\
u_r(s,x) = \lr{D}^{-1} e^{-is \lr{D}} \widehat{\mathcal{F}}^{-1} ( a_{+, \epsilon_r} \wg) \quad \text{ for  } r=3,4. 
\end{array}
\right.
\end{equation*}
To bound the $L_x^\infty$ norm of $u_1$ 
we use the simple relation $\norm{u_1(s)}{x}{\infty} \lesssim \normm{\lr{l} \widehat{u}_1}{l}$, 
which 
implies
\begin{equation}
\label{J1-u-for-prtl_s-bound}
\norm{u_1(s)}{x}{\infty} \lesssim \normm{\partial_s \wg}{l} \lesssim \lr{s}^{-3/2} \n{v}_X^2
\end{equation}
according to the bound in \eqref{prtl-t-wg-estimates}. 
From the dispersive decay estimates for the case of the standard Fourier transform 
\eqref{dispersive-estimates-standard-fourier},
\eqref{uniform-estimate-T-R}, and 
\eqref{dkawtg}, 
we have
\begin{equation}
\label{J1-other-u-bound}
\normm{u_2(s)}{x} \lesssim \n{v}_X \quad \text{and} \quad \|u_r(s)\|_{L_x^\infty}\lesssim \lr{s}^{-1/2} \n{v}_X.
\end{equation}
Using \eqref{J1-u-for-prtl_s-bound} and \eqref{J1-other-u-bound} into \eqref{J1-in-E2-6-bound-1}, we obtain
\begin{equation}
\label{J1-in-E2-6-bound}
\normm{J_1(t)}{k} \lesssim \n{v}_X^5 \int_0^t \lr{s}^{-5/2} ds \lesssim \n{v}_X^4. 
\end{equation}

The remaining integrals $J_2$ and $J_3$ can be treated in the same way with appropriate 
modifications for $u_2(s,x)$. 
In both cases, the $L_x^2$ norm of $u_2(s)$ is bounded by $\n{v}_X$,
and this lead to a $\n{v}_X^4$ bound for both $\normm{J_2(t)}{k}$ and $\normm{J_3(t)}{k}$, as desired. 
Combining these estimates, we obtain $\|\mathcal{I}_2(t)\|_{L^2_k} \lesssim \n{v}_X^4$.
Together with the previously obtained result for $\mathcal{I}_1$, 
we get the desired estimate \eqref{E2-6-bound}.
\end{proof}


\smallskip
\subsection{Estimates for the regular part}
\label{subsection-regular-N}
Our goal here is to show that for every $t \geq 0$, \begin{equation}
\label{quartic-regular-part-goal}
\normm{\int_0^t \partial_k \mathcal{N}_{R, j} (s,k) ds}{k} \lesssim \|v\|_X^4 \quad \text{for  } j = 1,2.
\end{equation}
We separate the analysis for $j=1$ and $j=2$. Recall the regular parts of the nonlinear spectral distribution given by \eqref{mu-r-1}--\eqref{mu-r-2} and the expression for $\mathcal{N}_{R,j}$ from \eqref{general-N-term}.

\subsubsection{Estimates for $\mathcal{N}_{R, 1}$}
Based on \eqref{general-N-term}, the expression for $\mathcal{N}_{R, 1}$ reads
\begin{equation}
\label{N-r-1-explicit}
\begin{aligned}
&(2 \pi)^{5/2}\mathcal{N}_{R,1}(s,k) \\
&= \sum_{\iota_1, \iota_2, \iota_3, \iota_4 \in \{+, -\}} \iota_1 \iota_2 \iota_3 \iota_4 \int \frac{e^{is \Phi_{\iota_1 \iota_2 \iota_3 \iota_4} 
}}{\lr{l} \lr{m} \lr{n} \lr{p}} \wg^{(\iota_1)}(l) \wg^{(\iota_2)}(m) \wg^{(\iota_3)}(n) \wg^{(\iota_4)}(p) 
\mu_{\iota_1 \iota_2 \iota_3 \iota_4}^{R,1}
dlmnp.
\end{aligned}
\end{equation}
The analysis for each term in the above sum is similar, 
therefore we may only consider the term with $\iota_1 = \iota_2 = \iota_3 = \iota_4 = +$. 
Next, we recall that $\mu_{++++}^{R,1}$ defined in \eqref{mu-r-1} is the sum of multiple integrals.
Every integral in the sum \eqref{mu-r-1} has at least one of the indices in $(A,B,C,D,E)$ equal to $R$;
they are similar in nature, and are treated essentially in the same way. 
In particular, it is sufficient to consider only two terms: 
\begin{itemize}
    \item the term with $A=R$;
    \item the term with $A=S, B=R$.
\end{itemize} 
For these terms, the analysis is independent on the rest of the indices.
so we do not specify them explicitly as $R$ and $S$. In other words, we consider 
\begin{equation}
\label{mu-r-1-1}
\mu^{R,1,1} := \int \overline{\psi_{R}(x,k)} \psi_{M_1}(x,l)
\psi_{M_2}(x,m) \psi_{M_3}(x,n)
\psi_{M_4}(x,p)dx, \quad M_j \in \{S, R\}, 
\end{equation}
and 
\begin{equation}
\label{mu-r-1-2}
\mu^{R,1,2} := \int \overline{\psi_{S}(x,k)} \psi_{M_1}(x,l)
\psi_{M_2}(x,m) \psi_{M_3}(x,n)
\psi_{R}(x,p)dx, \quad M_j \in \{S, R\}. 
\end{equation}
The corresponding integrals from \eqref{N-r-1-explicit} are 
\begin{equation*}
\begin{aligned}
\mathcal{N}_{R,1,1}(s,k) = \int \frac{e^{is \Phi
}}{\lr{l} \lr{m} \lr{n} \lr{p}} \wg(l) \wg(m) \wg(n) \wg(p) 
\mu^{R,1,1}
dlmnp,
\end{aligned}
\end{equation*}
and
\begin{equation*}
\begin{aligned}
\mathcal{N}_{R,1,2}(s,k) = \int \frac{e^{is \Phi
}}{\lr{l} \lr{m} \lr{n} \lr{p}} \wg(l) \wg(m) \wg(n) \wg(p) 
\mu^{R,1,2}
dlmnp,
\end{aligned}
\end{equation*}
where $\Phi = \Phi_{++++}$ given by \eqref{Phi-quartic}. 
After applying the $\partial_k$ differentiation to the above integral, we get for $j=1,2$ 
\begin{equation}
\label{mathcal-A-B}
\begin{aligned}
\partial_k \mathcal{N}_{R,1,j}(s,k) & = is \frac{k}{\lr{k}} \int \frac{e^{is \Phi
}}{\lr{l} \lr{m} \lr{n} \lr{p}} \wg(l) \wg(m) \wg(n) \wg(p) 
\mu^{R,1,j}
dlmnp\\
&\quad + \int \frac{e^{is \Phi
}}{\lr{l} \lr{m} \lr{n} \lr{p}} \wg(l) \wg(m) \wg(n) \wg(p) \partial_k
\mu^{R,1,j}
dlmnp\\
& =: \mathcal{A}_j(s,k) + \mathcal{B}_j(s,k). 
\end{aligned}
\end{equation}
To show the bound in \eqref{quartic-regular-part-goal} 
it suffices to verify the bound for the $L_k^2$ norms of $ds$ integral
associated with $\mathcal{A}_j$ and $\mathcal{B}_j$ from \eqref{mathcal-A-B}.
We show these estimates in the following lemmas.

\begin{lemma}
\label{lemma-A1-bound}
For every $t\geq 0$ we have 
\begin{equation*}
\normm{\int_0^t \mathcal{A}_1(s, k) ds}{k} \lesssim \n{v}_X^4.
\end{equation*}
\end{lemma}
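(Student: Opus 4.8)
The plan is to pass to physical space via the pseudodifferential-type bound \eqref{pseudo-psi-regular-bound} and then exploit the local decay of the Klein--Gordon flow. First I would insert the expression \eqref{mu-r-1-1} for $\mu^{R,1,1}$ into the formula for $\mathcal{A}_1$ in \eqref{mathcal-A-B}, use $\Phi_{++++} = \lr{k} - \lr{l} - \lr{m} - \lr{n} - \lr{p}$ together with Fubini, and arrive at
\[
\mathcal{A}_1(s,k) = is\,\frac{k}{\lr{k}}\,e^{is\lr{k}} \int \overline{\psi_R(x,k)} \prod_{j=1}^4 w_j(s,x)\,dx,
\qquad
w_j(s,x) := \int \frac{e^{-is\lr{l}}}{\lr{l}}\,\psi_{M_j}(x,l)\,\wg(s,l)\,dl,
\]
where $M_j \in \{S,R\}$ as in \eqref{mu-r-1-1}. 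By the decomposition \eqref{h-sing-reg-decomp} of $P_c L^2_x$ into its singular and regular parts, and since $\distF^{-1}\wg(s)$ equals the profile $g(s)$, each $w_j(s)$ is, up to a fixed constant, $\big(\lr{\wD}^{-1} e^{-is\lr{\wD}} g(s)\big)_{M_j}$; thus both Lemma \ref{lemma-local-l-infty-decay-singular} (for $M_j=S$) and \eqref{local-l-infty-decay-regular} (for $M_j=R$) apply to $w_j$.

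Next I would take $L^2_k$ norms, use Minkowski's inequality in $s$, discard the bounded multipliers $k/\lr{k}$ and $e^{is\lr{k}}$, and apply \eqref{pseudo-psi-regular-bound}:
\[
\normm{\int_0^t \mathcal{A}_1(s,k)\,ds}{k} \lesssim \int_0^t s\, \normm{\lr{x}^{-11}\prod_{j=1}^4 w_j(s)}{x}\,ds.
\]
Then I would split $\lr{x}^{-11} = \lr{x}^{-3}\prod_{j=1}^4 \lr{x}^{-2}$ and use H\"older's inequality, putting $\lr{x}^{-3}$ in $L^2_x$ and each $\lr{x}^{-2}w_j(s)$ in $L^\infty_x$. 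The remaining four factors are controlled by Lemma \ref{lemma-local-l-infty-decay-singular} (when $M_j=S$) and by \eqref{local-l-infty-decay-regular} (when $M_j=R$), both of which give
\[
\norm{\lr{x}^{-2} w_j(s)}{x}{\infty} \lesssim \lr{s}^{-1}\,\|\wg(s)\|_{H^1_k} \lesssim \lr{s}^{-1}\,\n{v}_X,
\]
since $\|\wg(s)\|_{H^1_k} \lesssim \normm{\partial_k\wg(s)}{k} + \normm{\lr{k}^2\wg(s)}{k} \lesssim \n{v}_X$ by \eqref{main-norm}.

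Combining these gives $\normm{\lr{x}^{-11}\prod_{j=1}^4 w_j(s)}{x} \lesssim \lr{s}^{-4}\n{v}_X^4$, hence
\[
\normm{\int_0^t \mathcal{A}_1(s,k)\,ds}{k} \lesssim \n{v}_X^4 \int_0^t s\,\lr{s}^{-4}\,ds \lesssim \n{v}_X^4,
\]
which is the claim. I do not expect a genuine obstacle here; the only point requiring attention is the loss of a factor $s$ coming from differentiating the phase $e^{is\Phi_{++++}}$ in $k$, which is absorbed by the four factors $\lr{s}^{-1}$ produced by the local decay of the $w_j$ (this is precisely why the regular output eigenfunction $\overline{\psi_R(x,k)}$ is essential: it supplies the spatial localization $\lr{x}^{-11}$ that makes \eqref{pseudo-psi-regular-bound} available). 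The sign combinations $(\iota_1,\iota_2,\iota_3,\iota_4)\neq(+,+,+,+)$ are handled in exactly the same way, using the conjugate analogues of the local decay estimates for the corresponding $\wg^{(\iota_j)}$ and $\psi^{(\iota_j)}$.
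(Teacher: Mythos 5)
Your proposal is correct and follows essentially the same route as the paper's proof: rewrite $\mathcal{A}_1$ in physical space with the functions $u_{M_j}=\big(\lr{\wD}^{-1}e^{-is\lr{\wD}}g(s)\big)_{M_j}$ (your $w_j$), absorb the $\overline{\psi_R}$ via \eqref{pseudo-psi-regular-bound}, and close using the four $\lr{s}^{-1}$ local-decay factors from Lemma~\ref{lemma-local-l-infty-decay-singular} and \eqref{local-l-infty-decay-regular} to beat the loss of $s$. Your argument just makes explicit (the $\lr{x}^{-11}=\lr{x}^{-3}\prod\lr{x}^{-2}$ Hölder split and the bound $\|\wg(s)\|_{H^1_k}\lesssim\n{v}_X$) what the paper leaves implicit.
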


\begin{proof}
We substitute $\mu^{R,1,1}$ from \eqref{mu-r-1-1} into the expression for $\mathcal{A}_1$, and expand the function $\Phi=\Phi_{++++}$ according to its definition in \eqref{Phi-quartic}. As a result, using \eqref{h-sing-reg-decomp}, we can rewrite $\mathcal{A}_1$ as
\begin{equation*}
\mathcal{A}_1(s,k) = 4i s \pi^2 \frac{k}{\lr{k}}e^{is \lr{k}} \int \overline{\psi_R(x,k)} u_{M_1}(x) u_{M_2}(x) u_{M_3}(x) u_{M_4}(x) dx,
\end{equation*}
where $u(x)= \lr{\wD}^{-1} e^{-is\lr{\wD}} g(s, x)$. 
Then, using the estimate \eqref{pseudo-psi-regular-bound}, we have 
\begin{equation*}
\begin{aligned}
\normm{\int_0^t \mathcal{A}_1(s, k) ds}{k} & \lesssim \int_0^t \normm{ \mathcal{A}_1(s, k)}{k} ds \lesssim \int_0^t s \normm{\lr{x}^{-11} u_{M_1} u_{M_2} u_{M_3} u_{M_4}}{x} ds\\
& \lesssim \int_0^t s \norm{\lr{x}^{-2} u_{M_1}}{x}{\infty} \norm{\lr{x}^{-2} u_{M_2}}{x}{\infty} \norm{\lr{x}^{-2} u_{M_3}}{x}{\infty} \norm{\lr{x}^{-2} u_{M_4}}{x}{\infty} ds.
\end{aligned}
\end{equation*}
From \eqref{local-l-infty-decay-singular} and \eqref{local-l-infty-decay-regular}, each of the above  $L_x^\infty$ norms is bounded by $\lr{s}^{-1} \n{\widetilde{g}}_{H_k^1}$, and this completes the proof.
\end{proof}

\begin{lemma}
\label{lemma-B1-bound}
For every $t\geq 0$ we have 
\begin{equation*}
\normm{\int_0^t \mathcal{B}_1(s, k) ds}{k} \lesssim \n{v}_X^4.
\end{equation*}
\end{lemma}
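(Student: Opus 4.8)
The plan is to follow the scheme of the proof of Lemma~\ref{lemma-A1-bound}, observing that $\mathcal{B}_1$ is in fact easier to handle than $\mathcal{A}_1$ because it carries no factor of $s$: in the splitting \eqref{mathcal-A-B} the derivative $\partial_k$ lands on the spectral distribution $\mu^{R,1,1}$ rather than on the phase, producing a factor $\partial_k\overline{\psi_R(x,k)}$ in place of $\overline{\psi_R(x,k)}$. Concretely, I would insert the definition \eqref{mu-r-1-1} of $\mu^{R,1,1}$ into $\mathcal{B}_1$, expand $\Phi=\Phi_{++++}=\lr{k}-\lr{l}-\lr{m}-\lr{n}-\lr{p}$, and use Fubini to decouple the $l,m,n,p$ integrals. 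Recalling the decomposition \eqref{h-sing-reg-decomp} and that in distorted Fourier variables the function $u(x):=\lr{\wD}^{-1}e^{-is\lr{\wD}}g(s,x)$ satisfies $\widetilde u(k)=\lr{k}^{-1}e^{-is\lr{k}}\wg(s,k)$, this rewrites
\begin{equation*}
\mathcal{B}_1(s,k)=4\pi^2 e^{is\lr{k}}\int \partial_k\overline{\psi_R(x,k)}\,u_{M_1}(x)\,u_{M_2}(x)\,u_{M_3}(x)\,u_{M_4}(x)\,dx,
\end{equation*}
where $u_{M_j}$ denotes the singular/regular component of $u$ as in \eqref{h-sing-reg-decomp}.

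Since no $s$ factor is present, I can then apply Minkowski's inequality directly in $s$ and the pseudodifferential bound \eqref{pseudo-dk-psi-regular-bound} to obtain
\begin{equation*}
\normm{\int_0^t \mathcal{B}_1(s,\cdot)\,ds}{k}\lesssim \int_0^t \normm{\lr{x}^{-10}\,u_{M_1}(s)\,u_{M_2}(s)\,u_{M_3}(s)\,u_{M_4}(s)}{x}\,ds.
\end{equation*}
Splitting $\lr{x}^{-10}\lesssim \lr{x}^{-2}\prod_{j=1}^4 \lr{x}^{-2}$, placing each $\lr{x}^{-2}u_{M_j}$ in $L^\infty_x$ and the leftover $\lr{x}^{-2}$ in $L^2_x$, I would bound the integrand by $\prod_{j=1}^4\norm{\lr{x}^{-2}u_{M_j}(s)}{x}{\infty}$. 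By the local decay estimates \eqref{local-l-infty-decay-singular} for the singular pieces and \eqref{local-l-infty-decay-regular} for the regular ones, each of these is $\lesssim \lr{s}^{-1}\n{\wg(s)}_{H^1_k}\lesssim \lr{s}^{-1}\n{v}_X$ by the definition \eqref{main-norm}. Hence the integrand is $\lesssim \lr{s}^{-4}\n{v}_X^4$, which is integrable over $[0,\infty)$, giving the claimed bound $\normm{\int_0^t \mathcal{B}_1(s,\cdot)\,ds}{k}\lesssim \n{v}_X^4$.

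I do not expect a serious obstacle here; the only point meriting care is that $\psi_R$ is defined piecewise across $k=0$, so differentiating in $k$ could in principle create a $\delta$-type contribution. This does not happen because $\psi_R(x,0)=0$ from both sides — a consequence of $T(0)=0$ and $R_\pm(0)=-1$ in \eqref{T-behaviour-around-zero} — so $\psi_R$ is continuous at the origin and the bound \eqref{pseudo-dk-psi-regular-bound}, already available to us, provides the needed control of $\partial_k\overline{\psi_R}$. Everything else is a routine repetition of the bookkeeping used for Lemma~\ref{lemma-A1-bound}.
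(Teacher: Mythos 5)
Your proposal is correct and follows the same route as the paper: rewrite $\mathcal{B}_1$ in the form $4\pi^2 e^{is\lr{k}}\int \partial_k\overline{\psi_R(x,k)}\,u_{M_1}u_{M_2}u_{M_3}u_{M_4}\,dx$, apply Minkowski in $s$ together with the pseudodifferential bound \eqref{pseudo-dk-psi-regular-bound}, then distribute $\lr{x}^{-10}$ and invoke the local decay estimates \eqref{local-l-infty-decay-singular} and \eqref{local-l-infty-decay-regular} to produce the integrable factor $\lr{s}^{-4}$. Your remark that $\psi_R(x,0)=0$ from both sides (via $T(0)=0$ and $R_\pm(0)=-1$), so that no $\delta$-type contribution arises when $\partial_k$ hits the piecewise-defined $\psi_R$, is a sound clarification that the paper leaves implicit in its statement of \eqref{pseudo-dk-psi-regular-bound}.
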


\begin{proof}
The proof is similar to the proof of Lemma \ref{lemma-A1-bound}. 
We rewrite $\mathcal{B}_1$ as
\begin{equation*}
\mathcal{B}_1(s,k) = 4 \pi^2 e^{is \lr{k}} \int \partial_k \overline{\psi_R(x,k)} u_{M_1}(x) u_{M_2}(x) u_{M_3}(x) u_{M_4}(x) dx,
\end{equation*}
where $u(x)= \lr{\wD}^{-1} e^{-is\lr{\wD}} g(s, x)$. Then the desired estimate follows from \eqref{pseudo-dk-psi-regular-bound}, \eqref{local-l-infty-decay-singular} and \eqref{local-l-infty-decay-regular}.
\end{proof}

\begin{lemma}
\label{lemma-A2-bound}
For every $t\geq 0$ we have 
\begin{equation*}
\normm{\int_0^t \mathcal{A}_2(s, k) ds}{k} \lesssim \n{v}_X^4.
\end{equation*}
\end{lemma}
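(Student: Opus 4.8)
The statement to prove is Lemma~\ref{lemma-A2-bound}, namely that $\big\| \int_0^t \mathcal{A}_2(s,k)\,ds \big\|_{L^2_k} \lesssim \n{v}_X^4$, where $\mathcal{A}_2$ is the term in \eqref{mathcal-A-B} carrying the factor $is\,k/\lr{k}$ together with the spectral distribution $\mu^{R,1,2}$ built from $\overline{\psi_S(x,k)}$ in the output slot and $\psi_R(x,p)$ in one of the input slots (plus $M_j\in\{S,R\}$ in the remaining slots).

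\medskip
\textbf{Plan.} The key point that distinguishes this from Lemma~\ref{lemma-A1-bound} is that the output generalized eigenfunction is now the \emph{singular} part $\psi_S(x,k)$, not $\psi_R(x,k)$. Consequently I cannot simply pull out $\overline{\psi_R(x,k)}$ and invoke the pseudo-differential bound \eqref{pseudo-psi-regular-bound}; the $L^2_k$ estimate now has to be extracted from the structure of $\psi_S$ itself. The first step is to rewrite $\mathcal{A}_2$ in physical space: unwinding the definitions \eqref{mu-r-1-2}, \eqref{h-sing-reg-decomp} and expanding $\Phi=\Phi_{++++}$, one gets
\begin{equation*}
\mathcal{A}_2(s,k) = 4is\pi^2\,\frac{k}{\lr{k}}\,e^{is\lr{k}} \int \overline{\psi_S(x,k)}\, u_{M_1}(x)\,u_{M_2}(x)\,u_{M_3}(x)\,u_R(x)\,dx,
\end{equation*}
where $u(x) = \lr{\wD}^{-1}e^{-is\lr{\wD}}g(s,x)$ and $u_R$ is its regular part. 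The factor $k/\lr{k}$ sitting in front of $\overline{\psi_S(x,k)}$ is exactly the gain needed to apply the smoothing estimate \eqref{smoothinginhom} of Lemma~\ref{lemsmoothing} with $\phi(k)=k/\lr{k}$, $\mathcal Q(x,k)=\psi_S(x,k)$ (which is bounded, so $\beta=0$), and $F(s,x) = s\,u_{M_1}(x)u_{M_2}(x)u_{M_3}(x)u_R(x)$.

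\medskip
\textbf{Carrying it out.} Applying \eqref{smoothinginhom} gives
\begin{equation*}
\normm{\int_0^t \mathcal{A}_2(s,k)\,ds}{k} \lesssim \big\| s\, u_{M_1}(s,\cdot)\,u_{M_2}(s,\cdot)\,u_{M_3}(s,\cdot)\,u_R(s,\cdot)\big\|_{L^1_x L^2_s([0,\infty])}.
\end{equation*}
Now I would place the factor $u_R$ — which carries the spatial localization $\lr{x}^{-12}\lr{k}^{-1}$ from \eqref{psi-regular-decay} — in $L^\infty_x$ together with two of the other $u_{M_j}$ in $L^\infty_x$, and the remaining $u_{M_j}$ in $L^1_x$; more precisely, write $u_R = \lr{x}^{-2}\cdot\lr{x}^2 u_R$ and distribute so that the $L^1_x$ factor is controlled in $L^2_x$ via an $\lr{x}^{-2}$ weight extracted from $u_R$ (which has decay to spare by \eqref{local-l-infty-decay-regular}). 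One then uses, for each $* \in \{S,R\}$,
\begin{equation*}
\norm{\lr{x}^{-2}u_{*}(s)}{x}{\infty} \lesssim \lr{s}^{-1}\,\n{\wg}_{H^1_k} \lesssim \lr{s}^{-1}\n{v}_X,
\end{equation*}
by Lemmas~\ref{lemma-local-l-infty-decay-singular} and the regular-part decay \eqref{local-l-infty-decay-regular}, plus $\normm{\lr{x}^{-2}u_*(s)}{x}\lesssim \lr{s}^{-1}\n{v}_X$ from integrating the pointwise bound against $\lr{x}^{-2}\in L^2_x$. This yields a pointwise-in-$s$ bound $\lesssim s\,\lr{s}^{-4}\n{v}_X^4$ for the integrand (four factors each contributing $\lr{s}^{-1}$, times the prefactor $s$), whose $L^2_s([0,\infty])$ norm is finite, giving the claimed $\n{v}_X^4$.

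\medskip
\textbf{Main obstacle.} The real subtlety is \emph{not} the time integrability — four factors of $\lr{s}^{-1}$ against a prefactor $s$ leave plenty of room — but rather making sure the smoothing lemma applies with the output eigenfunction being $\psi_S$ rather than $\psi_R$: one must verify that $\psi_S(x,k)$ is genuinely bounded uniformly in $x,k$ (it is, being a bounded combination of $e^{\pm ikx}$ with coefficients $T,R_\pm$ bounded by \eqref{uniform-estimate-T-R}, see \eqref{psi-bound}), and that the $k/\lr{k}$ factor in $\mathcal{A}_2$ is available to play the role of $\phi$ in \eqref{smoothinginhom}. A secondary point to check is the bookkeeping for the $L^1_x L^2_s$ norm: one of the four spatial factors must be put in $L^1_x$, and this is affordable only because $u_R$ (or, when $M_j=R$ for some $j$, one of the other factors) carries strong spatial decay; if, hypothetically, none of the inner slots were regular this argument would fail, but by construction of $\mu^{R,1,2}$ at least the $p$-slot is $\psi_R$, so the decay is always there. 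Once these structural points are in place the estimate is routine.
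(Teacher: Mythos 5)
Your argument is correct, but it reaches the estimate by a heavier route than the paper's. The paper does not use the smoothing Lemma \ref{lemsmoothing} here; it instead records the elementary Plancherel-type bound \eqref{psi-sing-transform-plancherel}, namely
$\normm{\int\overline{\psi_S(\cdot,k)}h\,dx}{k}\lesssim\normm{h}{x}$,
which holds simply because $\psi_S$ is a combination of $e^{\pm ikx}$ with coefficients bounded by \eqref{uniform-estimate-T-R}. Minkowski's inequality in $s$ then gives
\[
\normm{\int_0^t\mathcal{A}_2(s)\,ds}{k}\lesssim\int_0^t s\,\norm{\lr{x}^{-2}u_{M_1}}{x}{\infty}\norm{\lr{x}^{-2}u_{M_2}}{x}{\infty}\norm{\lr{x}^{-2}u_{M_3}}{x}{\infty}\norm{\lr{x}^{10}u_R}{x}{\infty}\,ds,
\]
and the local decay estimates \eqref{local-l-infty-decay-singular}, \eqref{local-l-infty-decay-regular} produce a factor $\lr{s}^{-4}\n{v}_X^4$, so the $ds$-integral converges absolutely; the $k/\lr{k}$ prefactor is simply discarded as $\leq 1$. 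Your route via \eqref{smoothinginhom} converts the estimate to an $L^1_xL^2_s$ criterion, which is a genuine gain only when the time decay is marginal (as in the quadratic analysis of Section \ref{section-quadratic}, where the paper really needs it); here the $\lr{s}^{-4}$ decay makes the smoothing step superfluous, and your own observation that time integrability is not the bottleneck is exactly why the paper can bypass it. One small bookkeeping caveat in your sketch: to land in $L^1_x$ you must absorb the $\lr{x}^2$ penalties from all three inner factors, not just produce "$\lr{x}^{-2}$ extracted from $u_R$"; this is fine because \eqref{local-l-infty-decay-regular} gives the regular factor $\lr{x}^{-10}$ decay, leaving a net integrable $\lr{x}^{-4}$, but it is worth stating precisely.
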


\begin{proof}
Using \eqref{h-sing-reg-decomp}, we rewrite $\mathcal{A}_2$ as
\begin{equation*}
\mathcal{A}_2(s,k) = 4i s \pi^2 \frac{k}{\lr{k}}e^{is \lr{k}} \int \overline{\psi_S(x,k)} u_{M_1}(x) u_{M_2}(x) u_{M_3}(x) u_{R}(x) dx,
\end{equation*}
where $u(x)= \lr{\wD}^{-1} e^{-is\lr{\wD}} g(s, x)$. 
From the definition of $\psi_S$ in \eqref{psi-singular-part-decomposition}, one can verify that 
\begin{equation}
\label{psi-sing-transform-plancherel}
\normm{\int \overline{\psi_S(x,k)} h(x)~dx}{k} \lesssim \normm{h}{x}.
\end{equation}
Then, applying \eqref{psi-sing-transform-plancherel} and \eqref{local-l-infty-decay-regular} we get
\begin{equation*}
\begin{aligned}
\normm{\int_0^t \mathcal{A}_2(s, k) ds}{k} & \lesssim \int_0^t s \normm{u_{M_1} u_{M_2} u_{M_3} u_{R}}{x} ds\\
& \lesssim \int_0^t s \norm{\lr{x}^{-2} u_{M_1}}{x}{\infty} \norm{\lr{x}^{-2} u_{M_2}}{x}{\infty} \norm{\lr{x}^{-2} u_{M_3}}{x}{\infty} \norm{\lr{x}^{10} u_{R}}{x}{\infty} ds \lesssim \n{v}_X^4
\end{aligned}
\end{equation*}
as desired.
\end{proof}

\begin{lemma}
\label{lemma-B2-bound}
For every $t\geq 0$ we have 
\begin{equation*}
\normm{\int_0^t \mathcal{B}_2(s, k) ds}{k} \lesssim \n{v}_X^4.
\end{equation*}
\end{lemma}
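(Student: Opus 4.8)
The plan is to follow the same template as the proof of Lemma \ref{lemma-A2-bound}, adapting it to the case where the regular part sits in the last slot while the singular (conjugate) kernel $\overline{\psi_S(x,k)}$ is the one being differentiated by $\partial_k$. Recall from \eqref{mu-r-1-2} that $\mu^{R,1,2}$ has $\overline{\psi_S(x,k)}$ and $\psi_R(x,p)$, with the middle three factors of either type, and from \eqref{mathcal-A-B} that $\mathcal{B}_2$ is the term where $\partial_k$ falls on $\mu^{R,1,2}$ rather than producing the factor $isk/\lr{k}$. First, I would use the decomposition \eqref{h-sing-reg-decomp} to collapse the $l,m,n,p$-integrals and rewrite
\begin{equation*}
\mathcal{B}_2(s,k) = 4\pi^2 e^{is\lr{k}} \int \partial_k \overline{\psi_S(x,k)}\, u_{M_1}(x)\, u_{M_2}(x)\, u_{M_3}(x)\, u_R(x)\, dx,
\end{equation*}
where $u(x) = \lr{\wD}^{-1} e^{-is\lr{\wD}} g(s,x)$ and the subscripts $M_j \in \{S,R\}$ record the singular/regular split of the three inner factors.

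The key observation is that differentiating $\overline{\psi_S(x,k)}$ in $k$ only produces, at worst, a factor of $x$ times bounded oscillatory exponentials (see \eqref{singular-part-pm-2}--\eqref{a-functions}; the coefficients $a_{\pm,\pm}$ have bounded $k$-derivatives away from $k=0$ by \eqref{uniform-estimate-T-R}, and the jump at $k=0$ is harmless here as in the singular-part analysis since $\widetilde g(0)=0$). Hence $\partial_k\overline{\psi_S(x,k)}$ obeys a bound of the form $|\partial_k \psi_S(x,k)| \lesssim \lr{x}$, and I would establish the analogue of \eqref{psi-sing-transform-plancherel},
\begin{equation*}
\normm{\int \partial_k\overline{\psi_S(x,k)}\, h(x)\,dx}{k} \lesssim \normm{\lr{x} h}{x},
\end{equation*}
by the same Plancherel argument used for \eqref{psi-sing-transform-plancherel}. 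The extra weight $\lr{x}$ is then absorbed by the strongly decaying factor $u_R$, for which Lemma with \eqref{local-l-infty-decay-regular} gives the gain $\norm{\lr{x}^{10} u_R}{x}{\infty} \lesssim \lr{s}^{-1} \n{\widetilde g}_{H^1_k} \lesssim \lr{s}^{-1}\n{v}_X$.

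Putting this together with Minkowski's inequality in $s$,
\begin{equation*}
\normm{\int_0^t \mathcal{B}_2(s,k)\,ds}{k} \lesssim \int_0^t \norm{\lr{x}^{-2} u_{M_1}}{x}{\infty}\, \norm{\lr{x}^{-2} u_{M_2}}{x}{\infty}\, \norm{\lr{x}^{-2} u_{M_3}}{x}{\infty}\, \norm{\lr{x}^{10} u_R}{x}{\infty}\,ds,
\end{equation*}
and each of the first three factors is $\lesssim \lr{s}^{-1}\n{v}_X$ by \eqref{local-l-infty-decay-singular} and \eqref{local-l-infty-decay-regular}, while the last is $\lesssim \lr{s}^{-1}\n{v}_X$ as above. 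So the integrand is $\lesssim \lr{s}^{-4}\n{v}_X^4$ and the $ds$-integral converges, giving the claimed bound $\n{v}_X^4$. Since there is no factor of $s$ in $\mathcal{B}_2$ (unlike in $\mathcal{A}_2$), there is in fact extra room to spare. I expect no serious obstacle; the only point requiring a little care is checking that the $k$-derivative of $\overline{\psi_S}$ does not create a genuine $\delta$-type singularity at $k=0$ — but this is exactly the cancellation already exploited in the singular-part estimates via the vanishing of $T(k)$ and $1+R_\pm(k)$ at $k=0$ together with $\widetilde g(s,0)=0$, so it goes through verbatim.
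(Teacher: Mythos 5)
Your proof is correct and takes essentially the same route as the paper: rewrite $\mathcal{B}_2$ using \eqref{h-sing-reg-decomp}, establish the Plancherel-type bound \eqref{dk-psi-sing-transform-plancherel} for $\partial_k\overline{\psi_S}$, absorb the extra $\lr{x}$ weight into $u_R$ via \eqref{local-l-infty-decay-regular}, and close with the local decay estimates. You also correctly observe that $\mathcal{B}_2$ carries no factor of $s$ (the paper's displayed bound keeps a spurious $s$ from the $\mathcal{A}_2$ computation, but this is harmless since the integrand is even more integrable without it).
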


\begin{proof}
The proof is similar to the proof of Lemma \ref{lemma-A2-bound}. 
We rewrite $\mathcal{B}_2$ as
\begin{equation*}
\mathcal{B}_2(s,k) = 4 \pi^2 e^{is \lr{k}} \int \partial_k \overline{\psi_S(x,k)} u_{M_1}(x) u_{M_2}(x) u_{M_3}(x) u_{R}(x) dx,
\end{equation*}
where $u(x)= \lr{\wD}^{-1} e^{-is\lr{\wD}} g(s, x)$. 
Similarly to \eqref{psi-sing-transform-plancherel}, one can verify
\begin{equation}
\label{dk-psi-sing-transform-plancherel}
\normm{\int \partial_k \overline{ \psi_S(x,k)} h(x)~dx}{k} \lesssim \normm{\lr{x}h}{x}.
\end{equation}
Then, we obtain
\begin{equation*}
\begin{aligned}
\normm{\int_0^t \mathcal{B}_2(s, k) ds}{k} & \lesssim \int_0^t s \normm{\lr{x} u_{M_1} u_{M_2} u_{M_3} u_{R}}{x} ds\\
& \lesssim \int_0^t s \norm{\lr{x}^{-2} u_{M_1}}{x}{\infty} \norm{\lr{x}^{-2} u_{M_2}}{x}{\infty} \norm{\lr{x}^{-2} u_{M_3}}{x}{\infty} \norm{\lr{x}^{10} u_{R}}{x}{\infty} ds \lesssim \n{v}_X^4.
\end{aligned}
\end{equation*}
\end{proof}
This completes the proof of \eqref{quartic-regular-part-goal} for $j=1$.


\smallskip
\subsubsection{Estimates for $\mathcal{N}_{R, 2}$}
Based on \eqref{general-N-term}, the expression for $\mathcal{N}_{R, 2}$ reads
\begin{equation}
\label{N-r-2-explicit}
\begin{aligned}
&(2 \pi)^{5/2}\mathcal{N}_{R,2}(s,k) \\
&= \sum_{\iota_1, \iota_2, \iota_3, \iota_4 \in \{+, -\}} \iota_1 \iota_2 \iota_3 \iota_4 \int \frac{e^{is \Phi_{\iota_1 \iota_2 \iota_3 \iota_4} 
}}{\lr{l} \lr{m} \lr{n} \lr{p}} \wg^{(\iota_1)}(l) \wg^{(\iota_2)}(m) \wg^{(\iota_3)}(n) \wg^{(\iota_4)}(p) 
\mu_{\iota_1 \iota_2 \iota_3 \iota_4}^{R,2}
dlmnp.
\end{aligned}
\end{equation}
As previously, we only consider the contribution from $\mu_{++++}^{R,2}$. 
Then, using \eqref{psi-singular-part-decomposition}, we can rewrite \eqref{mu-r-2} as
\begin{equation*}
\begin{aligned}
&\mu_{++++}^{R,2}(k,l,m,n,p)
\\ 
& = \sum \int \sigma_{\eps_0}(x) \overline{\psi_{S, \eps_0}}(x,k) \sigma_{\eps_1}(x)
  \psi_{S, \eps_1}(x,l) \sigma_{\eps_2}(x)\psi_{S, \eps_2}(x,m) \sigma_{\eps_3}(x)
  \psi_{S, \eps_3}(x,n) \sigma_{\eps_4}(x)\psi_{S, \eps_4}(x,p) dx,
\end{aligned}
\end{equation*}
where the sum is over 
\begin{equation*}
\eps_0, \eps_1, \eps_2, \eps_3, \eps_4\in \{+,-\} \quad \text{with   }  (\eps_0, \eps_1, \eps_2, \eps_3, \eps_4) \neq (+,+,+,+,+), (-,-,-,-,-).
\end{equation*}
Note that the integrand in each integral is compactly supported due 
to the presence of a $\sigma_+(x)\sigma_-(x)$ factor. 
The structure of the integrals is similar, and, therefore, 
it suffices to consider only the measure 
\begin{equation}
\label{mu-r-2-1}
\mu^{R,2,1}(k,l,m,n,p) := \int \sigma_{+}(x) \overline{\psi_{S, +}(x,k)} \sigma_{-}(x)\psi_{S, -}(x,l) \psi_{S}(x,m) \psi_{S}(x,n) \psi_{S}(x,p) dx,
\end{equation}
and the corresponding integral
\begin{equation*}
\begin{aligned}
\mathcal{N}_{R,2,1}(s,k) = \int \frac{e^{is \Phi
}}{\lr{l} \lr{m} \lr{n} \lr{p}} \wg(l) \wg(m) \wg(n) \wg(p) 
\mu^{R,2,1}(k,l,m,n,p)
dlmnp \qquad \text{with   } \Phi= \Phi_{++++}.
\end{aligned}
\end{equation*}
Using \eqref{h-sing-reg-decomp} and \eqref{mu-r-2-1}, we can rewrite 
\begin{equation}
\label{N-r-2-1}
\mathcal{N}_{R,2,1}(s,k) = 4\pi^2 e^{is\lr{k}} \int \sigma_+(x) \sigma_-(x) 
  \overline{\psi_{S,+}(x,k)} u_-(x) u_S^3(x) dx,
\end{equation}
where $u(x)= \lr{\wD}^{-1} e^{-is\lr{\wD}} g(s, x)$ and $u_-$ stands for
\begin{equation*}
u_-(x) := \frac{1}{\sqrt{2\pi}} \int_\R \psi_{S,-}(x,k) \widetilde{u}(k) dk.
\end{equation*}
We apply the $\partial_k$ differentiation to \eqref{N-r-2-1} and obtain
\begin{equation*}
\begin{aligned}
\partial_k \mathcal{N}_{R,2,1}(s,k) &= 4is \pi^2 \frac{k}{\lr{k}} e^{is\lr{k}}
  \int \sigma_+(x) \sigma_-(x) \, \overline{\psi_{S,+}(x,k)} u_-(x) u_S^3(x) dx
  \\
&\quad + 4\pi^2 e^{is\lr{k}} \int \sigma_+(x) \sigma_-(x) \partial_k 
  \,\overline{\psi_{S,+}(x,k)} u_-(x) u_S^3(x) dx
\\
& = : \mathcal{C}_1 + \mathcal{C}_2.
\end{aligned}
\end{equation*}
Both terms $\mathcal{C}_1$ and $\mathcal{C}_2$ are treated using 
the presence of the compactly supported function $\sigma_+(x) \sigma_-(x)$
and the local decay estimates . 
We also observe that, similarly to \eqref{psi-sing-transform-plancherel} 
and \eqref{dk-psi-sing-transform-plancherel}, we have 
\begin{equation*}
\normm{\int \partial_k^r \overline{\psi_{S,+}(x,k)} h(x)~dx}{k} \lesssim \normm{\lr{x}^r h}{x}, \qquad r =0,1.
\end{equation*}
Then, using \eqref{local-l-infty-decay-singular} and 
an analogous \eqref{local-l-infty-decay-singular}-type estimate 
for $u_-$, we have
\begin{equation*}
\begin{aligned}
\normm{\int_0^t \mathcal{C}_1(s, k) ds}{k} & \lesssim \int_0^t s \normm{\sigma_- \sigma_+ u_{-} u_S^3}{x} ds
\\
& \lesssim \int_0^t s \norm{\lr{x}^{-2} u_-}{x}{\infty} \norm{\lr{x}^{-2} u_{S}}{x}{\infty}^3 ds \lesssim \n{v}_X^4.
\end{aligned}
\end{equation*}
Similar steps ensure that $\normm{\int_0^t \mathcal{C}_2(s, k) ds}{k} \lesssim \n{v}_X^4$. 
Combining the above estimates, we get 
\begin{equation*}
\normm{\int_0^t \partial_k \mathcal{N}_{R,2,1}(s, k) ds}{k} \lesssim \n{v}_X^4
\end{equation*}
which completes the proof \eqref{quartic-regular-part-goal} for $j=2$.

\subsubsection{Proof of Lemma \ref{lemma-quartic-term-bound}}
The proof follows from \eqref{integral-decomposition-in-N} and the estimates 
\eqref{quartic-singular-part-goal} and \eqref{quartic-regular-part-goal}.

\bigskip

\end{document}